\newtheorem{definition}{Definition}[section]
\newtheorem{theorem}{Theorem}
\newtheorem{remark}{Remark}
\newcommand{\stkout}[1]{\ifmmode\text{\sout{\ensuremath{#1}}}\else\sout{#1}\fi}
\newcolumntype{L}[1]{>{\RaggedRight\hsize=#1\hsize}X}
\newcolumntype{C}[1]{>{\Centering\hsize=#1\hsize\hspace{0pt}}X}
\newcommand*\linenomathpatch[1]{%
  \cspreto{#1}{\linenomath}%
  \cspreto{#1*}{\linenomath}%
  \csappto{end#1}{\endlinenomath}%
  \csappto{end#1*}{\endlinenomath}%
}
\newcommand*\linenomathpatchAMS[1]{%
  \cspreto{#1}{\linenomathAMS}%
  \cspreto{#1*}{\linenomathAMS}%
  \csappto{end#1}{\endlinenomath}%
  \csappto{end#1*}{\endlinenomath}%
}
  \let\linenomathAMS\linenomathWithnumbers
  \patchcmd\linenomathAMS{\advance\postdisplaypenalty\linenopenalty}{}{}{}
  \let\linenomathAMS\linenomathNonumbers
\patchcmd{\mmeasure@}{\measuring@true}{
  \measuring@true
  \ifnum-\linenopenaltypar>\interdisplaylinepenalty
    \advance\interdisplaylinepenalty-\linenopenalty
  \fi
  }{}{}
\newcommand{\di}[1]{\,\mathrm{d}#1}
\newenvironment{proof}{\paragraph{Proof:}}{\hfill$\square$}
\def\es{\varepsilon}
\newcommand{\V}{\textcolor{black}}%comments Vishnu
\newcommand{\AM}{\textcolor{black}}%comments Adrian
\newcommand{\SN}{\textcolor{black}}%comments Surendra
\newcommand{\ME}{\textcolor{black}}%changes Michael
\begin{document}
	
	%\nocite{*} % this command forces all references in template.bib to be printed in the bibliography
	
	%\title{Numerical  Study of a Strongly Coupled Two-scale System with Nonlinear Dispersion}
    	\title{Numerical  Exploration of Nonlinear Dispersion Effects via a Strongly Coupled Two-scale System}
\author{Surendra Nepal$^a$\footnote{Corresponding author, email: \texttt{surendra.nepal$@$lnu.se}}\quad  Vishnu Raveendran$^b$\quad  Michael Eden$^c$\quad  Rainey Lyons$^d$\quad   Adrian Muntean$^e$}
\affil[]{$^a$Department of Mathematics, Linnaeus University, Växjö, Sweden}
\affil[]{$^b$Institute for Numerical Simulation, University of Bonn, Germany}
\affil[]{$^c$Faculty of Mathematics, University of Regensburg, Germany}
\affil[]{$^d$Department of Applied Mathematics, University of Colorado Boulder, CO, USA}
\affil[]{$^e$Department of Mathematics and Computer Science, Karlstad University, Sweden}

	\date{} % if this is omitted, the current date is used for the title page
	\maketitle

    \begin{abstract}
    The effective, fast transport of matter through porous media is often characterized by complex dispersion effects. To describe in mathematical terms such situations, instead of a simple macroscopic equation (as in the classical Darcy’s law), one may need to consider two-scale boundary-value problems with full coupling between the scales, where the macroscopic transport depends non-linearly on local (i.e. microscopic) drift interactions, which are again influenced by local concentrations.
    Such two-scale problems are computationally very expensive as numerous elliptic partial differential equations (cell problems) have to constantly be recomputed.
    In this work, we investigate such an effective two-scale model involving a suitable nonlinear dispersion term and explore numerically the behavior of its weak solutions.
    \noindent  
    We introduce two distinct numerical schemes dealing with the same non-linear scale-coupling: (i) a Picard-type iteration and (ii) a time discretization decoupling.
    In addition, we propose a precomputing strategy where the calculations of cell problems are pushed into an offline phase.
    Our approach works for both schemes and significantly reduces computation times.  We prove that the proposed precomputing strategy converges to the exact solution.
    Finally, we test our schemes via several numerical experiments that illustrate dispersion effects introduced by specific choices of microstructure and model ingredients.
    \end{abstract}
    
		{\bf Key words}: Nonlinear dispersion; Two-scale systems; Weak solutions; Iterative scheme; FEM approximations;  Numerical simulation.
		\\
		{\bf MSC2020}:  65M60, 47J25, 35M30, 35G55
\maketitle

\section{Introduction}\label{In}
Dispersion refers to the macroscopic spreading of solutes in a porous medium as a result of a combination of microscopic molecular diffusion and drift processes.
Mathematical modeling and computing of dispersion effects in porous media is a complex process that typically involves partial differential equations (PDEs) on multiple temporal and spatial scales (see, e.g., \cite{bear2012phenomenological, musuuza2009extended}).
Quantitative descriptions of dispersion can only be made for simple (regular) porous materials when sufficient scale separation can be assumed to hold when comparing the individual contributions of all the physical and chemical processes involved; see, e.g., \cite{Wood2003RVE} or \cite{raveendran2023homogenization}.
An important aspect of porous media research is concerned with the explicit structure of proposed dispersion tensors tailored for given specific real-world applications.
In practice, the understanding of dispersion usually relies either on detailed microscopic numerical simulations (which can be challenging to perform), or on empirical/semi-analytical expressions (often derived by means of volume averaging arguments). 
Such investigations have been carried out by the porous-media community, 
%\cite{raats1975distribution,bear1988dynamics,van2011nonlinear,Kulasiri_2012,guo2015dispersion,vafai2015handbook, ling2018hydrodynamic}
e.g.~\ME{\cite{guo2015dispersion,ling2018hydrodynamic}} and the references therein.
These works refer particularly to the case of reactive flows in soils with applications to petroleum engineering.
Quite interesting (and not fully understood) dispersion effects appear when fast drift interacts with thin structured media supporting textiles (cf., e.g. \cite{Orlik}) or packaging boards (cf., e.g. \cite{Alamin}).
Another important subject of investigation is solutes penetrating into dense hyperelastic materials; see, e.g., \cite{nepal2021moving,Wilmers}.
Dispersion effects play an important role in such settings: specifically, they are strengthened by the ability of this material to expand locally \ME{\cite{Fiori2025,neff2019modelling}}.
This material property potentially speeds up the penetration of the solute particles through the porous material.
We illustrate numerically this specific scenario in Section \ref{application}, as this has inspired the work reported here.

Effective dispersion models may involve nonlinear two-scale problems where $(a)$ the transport of macroscopic concentrations depends nonlinearly on microscopic drift interactions through an effective dispersion tensor and $(b)$ microscopic dynamics are at the same time influenced by the evolution of the local macroscopic concentrations.
Numerically solving such models leads to huge computational challenges and costs because of their nonlinear nature and strong coupling between scales.
In this work, we numerically investigate a strongly coupled two-scale system with nonlinear dispersion that models particle transport in porous media.
The two scales are coupled in such a way that the macroscopic concentration field influences the microscopic evolution, while the microscopic fields contribute to the macroscopic solution through an upscaled averaged transport coefficient, the dispersion tensor.
To address the computational challenges posed by this nonlinear coupling, we propose two numerical schemes and introduce a precomputing strategy to solve the model efficiently.
In this strategy, the computational burden of computing the effective dispersion tensor is shifted to a pre-processing phase -- often referred to as the offline phase \cite{abdulle2012reduced}.
This allows us to avoid solving the microscopic cell problems at each macroscopic point and at each time step.
Instead, we precompute the dispersion tensor for a representative set of parameters and use a linear interpolation of these precomputed values in an online stage to compute the macroscopic solution.
  We prove that the interpolation error due to the precomputing step can be effectively controlled by refining the parameter step size. 
\subsection{Two-scale dispersion model}\label{problem_formulation}
To set the stage, we introduce two distinct and well separated bounded spatial \V{domains in $\mathbb{R}^2$,} that we refer to as the {\em macroscopic} domain, indicated by $\Omega$, and the {\em microscopic} domain, denoted by $Y$. 
We denote the respective spatial variables by $x\in \Omega$ and $y\in Y$.
In the case where $Y$ has an obstacle \V{that is strictly included in $Y$}, we denote the inner boundary of $Y$ by $\Gamma_N$. 
We fix $T>0$ as the final time of the overall reaction-diffusion-drift process, and by $t\in S:=(0, T)$ we denote the time variable.
See \Cref{Fig:GeometrySchem} for an illustration of this geometric set-up. 
\begin{figure}[h]
    \begin{center}
        \begin{tikzpicture}[scale=0.85]
        \filldraw [gray] plot [smooth cycle] coordinates {(0,1) (1,0) (2,-0.5) (3,0)(5,0) (5,3) (0.5,3) };
\path[thick,->] (3.0,2)  edge [bend left] (8.5,2);
\filldraw[black] (3,2) circle (2pt) node[anchor=east]{\large $x$};
\filldraw[black] (2,1) circle (0pt) node[anchor=west]{\LARGE $\Omega$};
\draw (8.5,0) node [anchor=north] {{\scriptsize }} to (12,0) node [anchor=north] {{\scriptsize }}  to (12,2.5)node [anchor=south] {{\scriptsize }} to (8.5,2.5)node [anchor=south] {{\scriptsize }} to (8.5,0);
			\filldraw [black] plot [smooth cycle] coordinates {(10,1.5) (10,1) (10.8,1.35) (10.8,1.8)(10,2)};
		%\filldraw[black] (1.5,.75) circle (8pt);
	\draw[<-](10.8, 1.9) to (11.5,2.8) node[anchor=south] {{ $\Gamma_{N}$}};
   \filldraw[black] (10,1.5) circle (0pt) node[anchor=east]{\Large $Y$};
\end{tikzpicture}
\caption{\label{Fig:GeometrySchem} Schematic illustration of a typical two-scale geometry: the macroscopic domain $\Omega$ and the microscopic domain $Y$ with the internal boundary 
$\Gamma_N$.}
    \end{center}
\end{figure}
We are interested in producing suitable numerical schemes to approximate weak solutions to the following strongly coupled two-scale problem:
Find a pair of functions $(u,W)$ (with $W= (w_1,w_2)$) satisfying
\begin{subequations}
\begin{empheq}[left=\text{{\Large $(P)$}}\quad\empheqlbrace\quad]{align}
    \partial_t u +\mathrm{div}(-D^*(W)\nabla  u)&=f &\mbox{in}& \hspace{.2cm}  S\times \Omega, \label{homeq1}\\
    u&=0 &\mbox{on}& \hspace{.3cm}  S\times \partial\Omega, \label{macro_boundary_con}\\
    u(0)&= u_0&\mbox{in}& \hspace{.3cm}  \overline{\Omega}, \label{macro_initial_con}\\[.2cm]
    \mathrm{div}_y\left(-D \nabla_y w_i+G(u)B w_i\right)&=\mathrm{div}_y (D e_i) &\mbox{in}& \hspace{.2cm}  Y,\label{cell3}\\
    \left( -D\nabla_y w_i+BG(u)w_i\right)\cdot n_y &= \left( De_i\right)\cdot n_y &\mbox{on}& \hspace{.2cm} \Gamma_N,\label{cellbn3}\\
    w_i\, &\mbox{ is $Y$--periodic}.&&\label{homeqf}
\end{empheq}
 Here, $u=u(t,x)$ denotes the macroscopic concentration, representing the population density of particles traveling through a porous medium and $w_i=w_i(t,x,y)$ ($i=1,2$) denote the microscopic cell functions. The concept of cell function is best understood in the framework of homogenization theory; see, e.g., \cite{BLP} for a general overview.
The reader who is not familiar with this upscaling technique may simply perceive $w_i(t, x, y)$ as a computable object that encode averaged microscopic information (e.g., the shape and connectivity of the microscopic domain $Y$ and the drift interactions) to the dispersion tensor.
In this work, we consider only the case where $Y, \Omega\subset\mathbb{R}^2$, however, we want to point out that this system can be posed in higher dimensions as well.\footnote{The amount of cell problems is given by the dimensions with $e_i$ being the unit vectors, e.g., in $\mathbb{R}^3$, we would have $W=(w_1,w_2,w_3)$.} Additionally, because the dimensions of the domains do not need to match, the model offers great flexibility.
\AM{It is worth emphasizing that a large variety of  boundary conditions can be defined at the macroscopic boundary $\partial \Omega$ without affecting the solvability of Problem $P$. On the other hand, the conditions imposed across the boundary of the microscopic domain has to ensure the so-called separation of scales, i.e. the homogeneous Neumann and the periodic boundary conditions  are the best options to be imposed here to keep both the solvability of the problem and its micro-macro interpretation.}

\noindent
For the precise context we have in mind, we consider the reaction term   $f\colon S\times\Omega\longrightarrow\mathbb{R}$, the initial condition \(u_0\colon \overline{\Omega}\longrightarrow\mathbb{R}\), the diffusion matrix (at the microscopic scale) $D\colon Y\rightarrow \mathbb{R}^{2\times 2}$, and the drift vector $B\colon Y\rightarrow \mathbb{R}^2$  to be {\em a priori} given functions.
We use the notation $B$ and $D$ for brevity, although both are functions of $y$, i.e., $B=B(y)$ and $D=D(y)$. 
In the derivation and analysis of this two-scale system (\cite{raveendran2023homogenization,raveendran2023strongly}), the drift function $B$ must satisfy certain properties (specified later \SN{in Section \ref{assumption}}) which can be readily met by physically relevant functions.
For example, any solution of a suitable Stokes problem posed on the microscopic domain, $Y$, suffices. 
This is also the approach taken in our numerical assumptions, where the drift $B$ is calculated via a Stokes system, we refer to \cref{appendix}.
This ensures not only that the necessary assumptions are met, but also that the flow respects the microstructure in a physically meaningful way. 

Regarding the specific nonlinear coupling between the microscale and macroscale via the dispersion tensor $D^*$ and the drift interaction $G(u)$:
Owing to \cite{raveendran2023homogenization}, for a given vector
\[
W(t,x,\cdot)=(w_1(t,x,\cdot),w_2(t,x,\cdot))\in H^1(Y)\times H^1(Y),\quad (t,x)\in S\times\Omega,
\]
the effective dispersion tensor $D^*(W)\colon S\times\Omega\to\mathbb{R}^{2\times2}$ arising in \eqref{homeq1} is defined as
\begin{equation}\label{macrodiff}
     D^*(W):=\frac{1}{|Y|}\int_Y D(y)\left(I+\begin{bmatrix}
\frac{\partial w_1}{\partial y_1} & \frac{\partial w_2}{\partial y_1} \\
\frac{\partial w_1}{\partial y_2} & \frac{\partial w_2}{\partial y_2}
\end{bmatrix}\right)\, \di{y},
 \end{equation}
\end{subequations}
where $I\in\mathbb{R}^{2\times2}$ is the identity matrix and $|Y|$ is Lebesgue measure of $Y$.
At first sight, $D^*(W)$ looks like an effective diffusion matrix.
However, with a closer look, we associate \eqref{macrodiff} with dispersion as the entries in $D^*(W)$ depend implicitly on the drift $B$\footnote{Without any drift, \cref{macrodiff} is precisely the effective diffusivity matrix expected by standard homogenization techniques.}.
The nonlinear coupling enters through the term $G(u)$ in \cref{cell3}, where $G\colon\mathbb{R}\to\mathbb{R}$ encodes the interaction between macroscopic concentration and microscopic dynamics. 

% In our current system, we have the compatibility condition $\int_Y\mathrm{div}_y (D e_i)\di{x}=\int_{\Gamma_N} \left( De_i\right)\cdot n_y \di{\sigma}$.

In the remainder of the paper, we refer to \eqref{homeq1}--\eqref{macrodiff} as Problem $(P)$.

\ME{\subsection{Strategy and main results}}
\ME{The main difficulty behind Problem $(P)$, from a numerical point of view, is the interplay between micro- and macroscale:
The macroscopic concentration $u$ enters the microscopic cell problems through the scalar (and potentially nonlinear) parameter $G(u)$ and the corresponding cell solutions $W=(w_1,w_2)$ determine the effective dispersion tensor $D^*(W)$.
As a result, approximation errors introduced at the microscale propagate through the effective dispersion tensor to the macroscopic solution.
To handle this nonlinear feedback, we consider two different decoupling strategies:
\begin{itemize}
    \item[$(i)$] A Picard-type iteration in which the microscale problems are solved using the macroscopic solution from the previous iteration; see \cref{scheme1}
    \item[$(ii)$] A semi-implicit time stepping scheme in which the micro--macro coupling is lagged by one time step: the microscopic problems are solved using the macroscopic solution from the previous time step.
    This avoids the additional iteration loop; see \cref{scheme2}.
\end{itemize}
For the first strategy, which was already introduced and analyzed in~\cite{raveendran2023strongly}, we extend the established results by proving the strong convergence of the microscopic iterates
\(W_k\to W\) in $H^1(Y)^2$ almost everywhere in $S\times\Omega$ (\cref{theorem_well_posedness}).
}

\ME{The main focus of this work, however, is the main computational bottleneck: a cell problem must be solved at every macroscopic node and at every time step (see \cref{Fig:GeometrySchem}).\footnote{\ME{Additionally, at every Picard iteration in scheme 1.}}
In our specific problem, we are able to exploit that the cell problems depend on $(t,x)$ only through the scalar quantity $p=G(u(t,x))$.
Using a priori bounds on $u$, we get a compact parameter interval $[-L,L]$ that contains all relevant values of $p$.
During an offline-phase we solve (or \textit{precompute}) the parametrized cell problems for a finite sample $(p_i)_{i}\subset[-L,L]$, store the associated effective tensors $D_i$, and replace the tensor $D^*(W)$ by a piecewise-linear interpolant $D^{\rm int}(p)$ which we evaluate at $p=G(u(t,x))$.}

\ME{Our main error estimate (\cref{Thrm:InterpErrorEstimate}) shows that the interpolation error remains controlled after propagation through the two-scale coupling.
To be more precise, let $u$ denote the solution to Problem $(P)$ and $u^{\rm int}$ the solution obtained using the interpolated dispersion tensor. Then,
\[
\|u-u^{\rm int}\|_{L^2(S; H^1(\Omega))}\le C\delta
\]
with $\delta>0$ denoting the maximal spacing of the parameter sampling $(p_i)_{i}\subset[-L,L]$.
The implementation is carried out using FEniCS \cite{logg2012automated}.
We show in \cref{numerical_experiments}, for both schemes, this precomputing strategy saves considerable computation time without severe loss of accuracy.
The behavior of our approximation schemes is supported by both theoretical results and numerical examples, as we report here.
}

\subsection{A brief discussion on related literature}\label{ssec:literature}

The existence of weak solutions for this model has been established in \cite{raveendran2023strongly}.
The structure of this two-scale system is the result of rigorous homogenization asymptotics performed in recent work \cite{raveendran2023homogenization} where fast nonlinear drifts, leading to a strong dispersion, were investigated.  
This model considered here is a somewhat simpler but structurally similar model in which the dispersion tensor incorporates fewer effects compared to  \cite{raveendran2023homogenization}.
Nevertheless, our mathematical and numerical analyses cover both effective models as they belong to the same general class of two-spatial-scale mathematical problems. 
The precise structure of the nonlinear drift present in the microscopic equation is derived in \cite{CIRILLO2016436}(see also \cite{raveendran21}).
%We also refer the reader to the study of a similar setting in the absence of the nonlinear boundary condition \cite{raveendran2022upscaling}. 
As a direct consequence of the mathematical analysis and asymptotics work reported in \cite{raveendran2023homogenization,raveendran2023strongly}, we have a good picture of the assumptions required for the dispersion tensor to be physically and mathematically meaningful, \V{ these are listed in Section \ref{assumption}.} 
While the aforementioned references show that the model has been studied analytically, we now aim to explore possible avenues for numerical simulations and how these simulations can be used to explore what dispersion effects can naturally occur with such two-scale systems.

\ME{Two-scale numerical models have been developed in a variety of fields.
A prominent example outside porous-media transport.
A prominent example is the \(FE^2\) framework in nonlinear structural mechanics, where a microscopic boundary-value problem is solved at each macroscopic integration point and its averaged response determines the macroscopic constitutive behavior; see, for instance, \cite{Feyel2003}. This leads to computational challenges similar to those considered here, namely the repeated solution of parameter-dependent microscopic problems during the macroscopic simulation.
}
Within porous-media applications, they are utilized to compute the fluid flow in fractured deforming porous media \cite{rethore2007two}, the effect of flow on mineral dissolution in porous medium \cite{bringedal2020phase},
%the effective heat transfer in complex systems involving porous media and surrounding fluid layers \cite{eden2024effective}, 
and solid--liquid phase transitions with dendritic microstructure \cite{eck2002two}.
The mathematical literature on the numerical approximation of balance laws posed on two spatial scales has also grown considerably in recent years.
For example, \cite{ray2019numerical, olivares2021two} develop numerical schemes for solving micro–macro models in mineral dissolution and precipitation processes. 
In \cite{ray2019numerical}, the author utilizes the extended finite element method to evaluate cell problems while solving transport equations by applying mixed finite elements. 
In \cite{olivares2021two}, the authors employ a two-scale iterative scheme with an adaptive strategy to improve numerical efficiency at both scales. 
% Further contributions include the work in \cite{lind2020semidiscrete}, where two-scale finite element schemes are proposed and analyzed for coupled elliptic–parabolic systems, and \cite{nikolopoulos2023multiscale}, which investigates colloid transport in porous layers with evolving internal structures.
% Iterative techniques for parameter-dependent and random convection–diffusion problems are explored in \cite{feng2022efficient}.

% Multiscale finite element approaches have also been developed to solve similar problems in various fields \cite{EDEN2022103408, bris2019multiscale, henning2014adaptive, abdulle2012reduced, hou1997multiscale}.

% In this work, we explore two types of decoupling techniques, one based on Picard-type iterations (scheme $1$), as presented in \cite{olivares2021two,feng2022efficient}, and the other based on time stepping (scheme $2$); see, for instance, \cite{lind2020semidiscrete}.
% While the decoupling strategy for scheme $2$ follows a standard approach, scheme $1$ is inspired by \cite{raveendran2023strongly}, where it was employed to show the existence of weak solutions to Problem $(P)$.
% As the convergence of scheme $1$ is well understood (see the technical details in \cite{raveendran2023strongly}), we use it as a baseline for comparison.

\ME{Different approaches have been proposed and developed to reduce the computational cost of two-scale (or, more generally, multiscale) problems involving repeated microscopic solves.
In the multiscale finite element method \cite{hou1997multiscale}, the microscopic information is incorporated into local basis functions (or local sampling problems in the case of the closely related heterogeneous multiscale method \cite{Abdulle2012-nr}) which allows the global problem to be solved on a coarse mesh.
This framework is particularly well-suited to one-way coupled problems (e.g., when $G(u(t,x))$ is replaced by a known function $g(t,x,y)$) where the local basis functions can be calculated offline.
In nonlinearly coupled problems such as ours, however, the local basis functions become dependent on the macroscopic solution which introduces an additional level of complexity and may substantially increase the online computational cost.
We refer to \cite{Evendiev04}, where such nonlinear couplings have been explored within the framework of the multiscale finite element method.}

\ME{A second class of methods, which encompasses our precomputing approach, relies on database, library, or tabulation strategies.
In these methods, microscopic solutions (or the corresponding effective quantities) are either computed offline for a representative set, as in~\cite{Chen1995-ly,tan2007multiscale}, or generated during the online simulation and stored for subsequent reuse, as in \cite{pope1997computationally,rocha2021onthefly}.
One general drawback of constructing an offline database is that the \textit{realizable} parameter region (i.e., possible parameter values) may be substantially larger than the actual \textit{accessed} region (i.e., attained values during the simulation), which can make the offline phase unnecessarily heavy computationally \cite{pope1997computationally}.
This issue is less pronounced in our case since the coupling can be described via a 1-D admissible parameter region $[-L,L]$ whose bounds are determined via a priori estimates on the macroscopic solution.
This makes this precomputing strategy a good fit for our specific problem.
Closely related precomputing and interpolation strategies have been applied to other multiscale problems in~\cite{le2023numerical,eden2026twoscale}.
}

\ME{Another widely used alternative is the so called \textit{reduced basis} approach, see, e.g., \cite{boyaval2008reduced}, where the cell-solution are approximated in a low-dimensional space constructed from previously computed snapshot solutions.
Similar to our precomputing strategy, this method utilizes an offline--online decomposition: full cell solutions $W_{p_i}$ are computed offline for a finite parameter sample $(p_i)_{i=1}^N$. 
However, instead of directly approximating the nonlinear $D^*(W)$ via the interpolation $D^{\rm int}(p)$, these solutions are used as basis functions to solve $W_p^N$ online over the reduced space $V_N={\rm{span}}\{W_{p_1},\dots,W_{p_N}\}$.
In our setting, direct precomputation of the effective tensor is particularly advantageous as the macroscopic equation only needs $D^*(W_p)$ not the actual cell-functions $W_p$.}

\ME{Finally, we mention adaptive techniques, such as the approach proposed in~\cite{redeker2013fast,redeker2016upscaling}.
There, the microscopic problems for different macroscopic nodes are compared using a suitable similarity criterion and the microscopic problems are solved only at a dynamically selected set of active nodes.
In contrast to reduced-basis and precomputing approaches, this strategy is predominantly online (the similarity criterion is chosen a priori, typically based on offline sensitivity studies).
In our context, this means that $D^*(W_p)$ would only be recalculated if the parameter value $p=G(u(t,x))$ changes significantly (w.r.t.~the similarity criterion).}

\subsection{Outline of the paper}

This paper is organized as follows: The description of the model equations is presented in \cref{problem_formulation}.
In \cref{notations} we list the notation used, the restrictions on data and parameters needed, and the definition of the weak formulation of the problem.
In \cref{numerical_scheme}, we define two numerical schemes able to approximate the weak solution to the model equations and recall any related results.
In \cref{implementation}, we list the pseudocode algorithms behind the implementation of both schemes. We add here a  discussion of the proposed precomputing strategy. 
Finally, in \cref{numerical_experiments}, we test the computational efficiency of the numerical schemes (with and without the precomputing strategy) using practical examples.
Our final comments and a brief discussion of potential future work are included in \cref{conclusion}. 

\section{Notation, assumptions, and concept of weak solution} \label{notations}

In this section, we present our notation for the function spaces used, the basic assumptions on the data, and the definition of the weak formulation of Problem $(P)$.
We also reference a well-posedness result for our two-scale system.

We assume that our macroscopic domain $\Omega\subset\mathbb{R}^2$ is a bounded $C^{2,\alpha}$-domain for some $\alpha\in(0,1)$ and that our microscopic domain $Y\subset\mathbb{R}^2$ is of the form $Y=(0,1)^2\setminus\overline{Y_0}$ where $Y_0\subset\subset(0,1)^2$ is a Lipschitz domain.
We also assume $Y$ to be connected.
Throughout the paper, we make use of the following function spaces: 
\begin{align*}
    \mathcal{U}&:=\{v\in L^2(S;H_0^{1}(\Omega)): \partial_t v\in L^2(S;H^{-1}(\Omega))\},\\
    H_{\#}^1(Y)&:=\{v\in H^1(Y): v \;\mbox{is}\; Y- \mbox{periodic}\},\\
    \mathcal{W}&:=\left\{v\in H_{\#}^1(Y): \int_Yv(y)\di{y}=0\right\},
\end{align*}
where $H_{\#}^1(Y)$ is equipped with standard $H^1(Y)$ norm. We denote $\langle\cdot,\cdot\rangle$ as the duality pairing between $H^{-1}(\Omega)$ and $H_0^1(\Omega).$
We use the standard notation and definitions (Sobolev and Bochner spaces) and refer the reader, e.g., to \cite{adams2003} for the needed details on the corresponding norms, inner products, and eventually other properties.

\subsection{Assumptions and concept of weak solution} \label{assumption}
 The assumptions on the model ingredients are as follows:

 \begin{enumerate}[label=({A}{{\arabic*}})]
 \item  
The microscopic diffusion matrix satisfies $D\in (H^1_\#(Y)\cap L^\infty(Y))^{2\times2}$. Additionally,  there exists $\theta>0$  such that for all  $\eta\in \mathbb{R}^2$ and for almost all  $y\in Y$, it holds
 \begin{equation*}\label{el}
 \theta |\eta |^{2} \leq
D\eta\cdot \eta. 
 \end{equation*}\label{A1}
\item The drift interaction term $G\colon\mathbb{R}\rightarrow\mathbb{R}$ is a locally Lipschitz function, meaning it is Lipschitz continuous over compact sets.
 	 \label{A2}
  \item The microscopic drift velocity  
 $B\in (H^1_\#(Y)\cap L^\infty(Y))^2$ satisfies the following conditions:
\begin{equation*}
\begin{cases}
 	 \mathrm{div}B=0\hspace{.4cm}\mbox{in} \hspace{.4cm}   Y,\\
 	  B\cdot n_y =0 \hspace{.4cm}\mbox{on} \hspace{.4cm}  \Gamma_N.
 	 \end{cases}
 	 \end{equation*}
 \label{A3}
\item
 $f\in C^{\alpha,\frac{\alpha}{2}}(S\times\Omega)$ and $u_0\in C^{2+\alpha}(\Omega)$ \V{with the same $\alpha\in(0,1)$ as in the $C^{2,\alpha}$-regularity assumption on $\Omega$.}
\label{A4}
 \end{enumerate}
The assumptions \ref{A1}--\ref{A4} are taken from our recent work \cite{raveendran2023strongly} where we explain their meaning. 
These assumptions are crucial in proving the existence of solutions to problem $(P)$.
  Having in mind \ref{A1}--\ref{A4}, we can now define the solution to Problem $(P)$ in a weak sense as follows:

\begin{definition}\label{D1}
We say that the pair $(u,W)$ is a weak solution to Problem $(P)$ if $u\in\mathcal{U}$ with $u(0,\cdot) = u_0$ and, for almost every $(t,x)\in S\times \Omega$, $W(t, x, \cdot)\in \mathcal{W}^2$  satisfy
\begin{subequations}
\begin{align}
       \langle \partial_t u, \phi \rangle+\int_{\Omega} D^*(W) \nabla u \cdot\nabla \phi \di{x} &=  \int_{\Omega} f \phi \di{x},\label{weakformmacro}\\
       \int_{Y}\big(D\nabla_y w_i  - G(u(t,x))Bw_i\big)\cdot \nabla_y \psi\di{y}&=\int_{Y} \mathrm{div}_y( D e_i)\psi \di{y}-\int_{\Gamma_N} De_i\cdot n_y\psi \di{\sigma},\label{weakformmicro}
\end{align}
\end{subequations}
for all  $(\phi, \psi)\in H^1(\Omega)\times H_{\#}^1(Y)$ and $i\in\{1,2\}$.
\end{definition}

We clarify the weak solvability of Problem $(P)$ via the following theorem:
\begin{theorem}[Solvability of Problem $(P)$, cf. {\cite[Theorem 2 and Lemma 7]{raveendran2023strongly}}]\label{theorem_existance_of_p}Assume \ref{A1}--\ref{A4} hold.
Then there exists a unique pair 
$$(u,W)\in \mathcal{U}\times  L^\infty(S\times\Omega;\mathcal{W}^2)$$ 
%such that  $(u,W)$ 
that is a weak solution to the nonlinear parabolic-elliptic system \eqref{homeq1}--\eqref{homeqf} in the sense of \cref{D1}. Furthermore,  we also have that $u, |\nabla u| \in L^\infty(S\times\Omega)$.
\end{theorem}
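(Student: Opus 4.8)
The plan is to prove this by decoupling the two-scale structure into two pieces that I can handle with standard PDE theory, then reconnect them via a fixed-point/regularity bootstrap. The key observation is that the microscopic cell problems \eqref{weakformmicro} are, for each fixed $(t,x)$, linear elliptic problems in $w_i$ whose only coupling to the macroscale is through the scalar $G(u(t,x))$; and the macroscopic equation \eqref{weakformmacro} is, given $W$, a linear parabolic equation in $u$ with coefficient $D^*(W)$. So I would set up the solution operator as follows.

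\textbf{Step 1 (the cell problems define a smooth map $g \mapsto W$).} For a fixed scalar $g \in \mathbb{R}$, consider the problem: find $W = (w_1,w_2) \in \mathcal{W}^2$ solving $\int_Y (D\nabla_y w_i - g\, B w_i)\cdot\nabla_y\psi \di{y} = \int_Y \mathrm{div}_y(De_i)\psi\di{y} - \int_{\Gamma_N} De_i\cdot n_y\,\psi\di{\sigma}$ for all $\psi\in H^1_\#(Y)$. By \ref{A1} the bilinear form $a_g(v,\psi) := \int_Y(D\nabla_y v - gBv)\cdot\nabla_y\psi\di{y}$ is bounded and coercive on $\mathcal{W}$ (coercivity: the drift term $\int_Y gBv\cdot\nabla_y v\di{y}$ vanishes after integration by parts because $\mathrm{div}\,B = 0$ in $Y$ and $B\cdot n_y = 0$ on $\Gamma_N$ by \ref{A3}, plus $Y$-periodicity, so $a_g(v,v) = \int_Y D\nabla_y v\cdot\nabla_y v\di{y} \geq \theta\|\nabla_y v\|_{L^2(Y)}^2$, and Poincaré–Wirtinger on $\mathcal{W}$ closes it). Lax–Milgram gives a unique $W = W(g)$, with $\|W(g)\|_{H^1(Y)^2}$ bounded in terms of $|g|$ on bounded sets, and the map $g \mapsto W(g)$ is locally Lipschitz (subtract two equations, test with the difference, use coercivity). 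Composing with \eqref{macrodiff}, the map $g \mapsto D^*(W(g))$ is locally Lipschitz from $\mathbb{R}$ into $\mathbb{R}^{2\times 2}_{\mathrm{sym}}$, and by the same coercivity $D^*(W(g))$ is uniformly elliptic with the same constant $\theta/|Y|$ (again the drift contribution integrates away). This step is routine.

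\textbf{Step 2 (fixed point for $u$).} Define $\Lambda\colon u \mapsto \tilde u$ where $\tilde u$ solves the linear parabolic problem $\partial_t \tilde u - \mathrm{div}(D^*(W(G(u(t,x))))\nabla\tilde u) = f$ with the boundary/initial data \eqref{macro_boundary_con}--\eqref{macro_initial_con}. Given $u \in \mathcal{U}$ (or better, in a suitable Hölder class), $a(t,x) := D^*(W(G(u(t,x))))$ is a uniformly elliptic, bounded, measurable-in-$(t,x)$ matrix, so by standard linear parabolic theory $\Lambda$ is well defined and maps into $\mathcal{U}$; a priori energy estimates give a ball that $\Lambda$ maps into itself. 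For the contraction/compactness, I would either (a) invoke Schauder's fixed point theorem using the compact embedding $\mathcal{U} \hookrightarrow L^2(S\times\Omega)$ together with continuity of $\Lambda$ in this topology (continuity follows from Step 1's Lipschitz bound plus continuous dependence of the linear parabolic solution on its coefficients, which holds once one has the $L^\infty$ gradient bound below), or (b) set up a Banach fixed point on a short time interval and continue. Given that the paper cites \cite{raveendran2023strongly} for the statement, I expect the cleanest route is Schauder.

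\textbf{Step 3 (regularity: $u,|\nabla u|\in L^\infty$).} This is where I'd expect the main difficulty, and it's also what makes the whole argument close, because the Lipschitz continuity of $G$ in \ref{A2} is only \emph{local} — I need an a priori $L^\infty$ bound on $u$ (hence on $G(u(t,x))$, hence a fixed compact set on which $G$ is genuinely Lipschitz) before the fixed-point map is even well behaved. The strategy: first get $\|u\|_{L^\infty(S\times\Omega)}$ from the maximum principle / De Giorgi–Nash–Moser for the linear parabolic equation with bounded measurable uniformly elliptic coefficients and data $f\in C^{\alpha,\alpha/2}$, $u_0\in C^{2+\alpha}$ by \ref{A4} — this bound depends only on $\|f\|_\infty$, $\|u_0\|_\infty$, $T$, $\theta$, not on the coefficient's modulus of continuity. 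Once $u$ is bounded, $g = G(u(t,x))$ ranges in a compact set $K$; on $K$, $g\mapsto D^*(W(g))$ is Lipschitz, and — crucially — it is also \emph{Hölder continuous in $(t,x)$} with the same exponent as $u$, so the coefficient $a(t,x)$ is in $C^{\alpha,\alpha/2}$ provided $u$ itself is. So I would bootstrap: the $L^\infty$ bound plus $L^p$ parabolic regularity (Ladyzhenskaya–Solonnikov–Uraltseva) gives $u\in C^{\beta,\beta/2}$ for some $\beta>0$; this makes $a(t,x)$ Hölder; Schauder parabolic estimates then give $u\in C^{1+\gamma,(1+\gamma)/2}$, in particular $\nabla u\in L^\infty(S\times\Omega)$. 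Uniqueness I would get from the $L^\infty$-gradient bound: for two solutions, subtract the macro equations, test with the difference, and use that $D^*(W(G(u_1))) - D^*(W(G(u_2)))$ is controlled pointwise by $|u_1-u_2|$ (Lipschitz on the compact range) times $|\nabla u_j|$ (bounded), then Grönwall. The honest statement is that all of Step 3 is where the real work lives; Steps 1–2 are soft.
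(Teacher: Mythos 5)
Your outline is sound, but note that the paper itself offers no proof of this theorem: it is imported wholesale from \cite[Theorem 2 and Lemma 7]{raveendran2023strongly}, and the only thing the present paper reveals about that proof is that existence is obtained there via the Picard-type iteration of \cref{D2} (the basis of scheme $1$): set $u^0=u_0$, solve the cell problems with $G(u^k)$ frozen, assemble $D^*(W^k)$, solve the linear parabolic problem for $u^{k+1}$, establish the $k$-independent bounds \eqref{eq:ukbound}--\eqref{eq:gradukbound}, and pass to the limit. So where you invoke Schauder on the map $u\mapsto\tilde u$, the reference runs successive approximations; these are close cousins, and your Step 1 (drift term annihilated by $\mathrm{div}\,B=0$, $B\cdot n_y=0$ and periodicity; Lax--Milgram on $\mathcal{W}$; Lipschitz dependence of $W$ and $D^*$ on the scalar parameter) is exactly the machinery this paper reuses in the proofs of \cref{theorem_well_posedness} and \cref{Thrm:InterpErrorEstimate}, while your Step 3 is precisely the content of the cited Lemmas. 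The iteration buys a constructive scheme that doubles as a numerical method and sidesteps the continuity-in-$L^2$ verification that Schauder needs; your route is softer on the fixed point but leans on the identical regularity bootstrap. Two places to tighten: (i) the uniform ellipticity of $D^*(W(g))$ is not quite free ``with the same constant $\theta/|Y|$'' --- one uses that the drift vanishes when the cell equation is tested against $w_\xi:=\xi_1w_1+\xi_2w_2$ to get $|Y|\,D^*\xi\cdot\xi=\int_Y D(\xi+\nabla_yw_\xi)\cdot(\xi+\nabla_yw_\xi)\,\mathrm{d}y$, but bounding this below by $c|\xi|^2$ in a perforated cell requires a further (classical, nontrivial) argument, which is exactly what the paper cites as \cite[Lemma 3]{raveendran2023strongly}; (ii) since $G$ is only locally Lipschitz, the restriction to the invariant set $\{\|u\|_{L^\infty}\le\|u_0\|_{L^\infty}+T\|f\|_{L^\infty}\}$ must be built into the fixed-point set from the start, not recovered afterwards in Step 3 as your write-up suggests.
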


\section{Two approximation schemes} \label{numerical_scheme}
In this section, we construct two distinct numerical schemes to approximate the solution to the weak form  \eqref{weakformmacro}--\eqref{weakformmicro} of the Problem $(P)$.

\subsection{Finite element approximations for two-scale problems} \label{Iter_discrete}
Before explicitly defining the numerical schemes studied in the rest of the manuscript, we briefly review the general ideas and necessary tools for two-scale Galerkin approximations. 
Following the description in \ME{\cite{ciarlet2002finite}}\SN{\cite{matache2002two, Abdulle2012-nr}}, we let $\mathbb{P}^1$ refer to the space of polynomials of degree $1$ and let  $\mathcal{D}_{H}$ be a finite subdivision of the macroscopic domain, $\Omega$, of triangular elements,  $D \in \mathcal{D}_{H}$, each with diameters $H_D$. 
Then, we define $H:= {\max}_{D \in \mathcal{D}_{H}} H_D$  to be the global macroscopic mesh size. 
We define the finite element space  $\mathbb{V}_H \subset H_0^1(\Omega)$  by
\begin{align*}
 \mathbb{V}_H := \{ v \in \mathcal{C}(\bar{\Omega}) |\;\; v\vert_D \in \mathbb{P}^1(D)\;\; \text{for all}\;\; D \in \mathcal{D}_{H},\; v = 0\;\; \text{on}\;\; \partial \Omega \}.
 \end{align*}

Similarly, we subdivide the microscopic domain, $Y$, with triangular elements $K \in \mathcal{K}_{h}$ and global microscopic mesh size $h:= {\rm max}_{K \in \mathcal{K}_{h}} h_K$, 
where $\mathcal{K}_{h}$ represents a finite subdivision of $Y$ and $h_K$ is the diameter of the element $K \in \mathcal{K}_{h}$.   
We then define the finite element space  $\mathbb{W}_h \subset \mathcal{W}$ as
\begin{align*}
\mathbb{W}_h := \{ \phi \in \mathcal{C}(\bar{Y}) |\;\; \phi \vert_K \in \mathbb{P}^1(K)\;\; \text{for all}\;\; K \in \mathcal{K}_{h}, \phi \;\; \text{is $Y$ periodic}\}.
\end{align*}

Let $\mathcal{N}_1$  and $\mathcal{N}_2$ be the sets of degrees of freedom (DOF) of $\mathcal{D_H}$ and $\mathcal{K}_h$, respectively. 
We then fix a sets of basis functions $\{\xi_l\}_{l \in \mathcal{N}_1}$ and  $\{\eta_j\}_{j \in \mathcal{N}_2}$ such that span$(\xi_l) = \mathbb{V}_H$ and span$(\eta_j) = \mathbb{W}_h$, to approximate the macroscopic and microscopic solutions, $u$ and $w$ via
\begin{align}
\label{macro_solution}
&u^{H}(t, x) := \sum_{l\in \mathcal{N}_1} \alpha_l(t) \xi_l(x),\\
\label{micro_solution}\text{and }&w_{i}^{h} (t, \tilde{x}, y) := \sum_{j\in \mathcal{N}_2} \beta_{i,j}(t, \tilde{x})\eta_j(y), \quad \text{for $i \in \{1,2\}$}.
\end{align}
The term  $\alpha_l(t)$ arising in \eqref{macro_solution} is the macroscopic Galerkin projection coefficient corresponding to $l$th degree of freedom at time $t \in S$.
Likewise, the term  $\beta_{i,j}(t, \tilde{x})$ in \eqref{micro_solution} is the microscopic Galerkin projection coefficient corresponding to the $j$th degree of freedom for some macroscopic discrete node $\tilde{x}$ \SN{of the mesh $\mathcal{D_H}$} at time $t$.
It is important to track the dependence of the macroscopic variables as they are treated as parameters in the microscopic problem \eqref{cell3} through the drift function $G$. 
The problem then turns to decoupling the two scales and making use of the weak form of problem $(P)$ to calculate the approximations \eqref{macro_solution} and \eqref{micro_solution}.
In general, approximating solutions to nonlinear coupled problems posed on multiple space scales is a complex matter as there are multiple ways to proceed and many computational challenges to subvert.
Hence, it is often {\em a priori} unclear which route is most effective. 
In the next two sections, we will propose two distinct numerical schemes which make use of different decoupling techniques along with the weak form of the Problem $(P)$ \eqref{weakformmacro}--\eqref{weakformmicro}.
The first (scheme $1$) refers to a Picard-type iterative scheme (see \cref{scheme1}), while the second (scheme $2$) exploits the decoupling of the problem via a time discretization (see \cref{scheme2}).
As we will see later, while scheme $1$ is effective for mathematical analysis of Problem $(P)$ and convergence results are available \cite{raveendran2023strongly}, it comes with a high computational cost.
On the other hand, scheme $2$ outperforms scheme $1$ without much additional complexity.
Additionally, both schemes can make use of a precomputing step to greatly improve the performance of both schemes (see \cref{precomputing_strategy}).

%%%%%%%%%%%%%%%%%%%%%%%%%%%%%%%%%%%%%%%%%%%%%%%%%%%%%%%%%%%%%%%5
\subsection{Scheme $1$: linearization by Picard-type iteration} \label{scheme1}
 In this section, we \V{recall a Picard-type iteration method proposed in our recent work \cite{raveendran2023strongly} and then construct its fully discrete finite element scheme, which we call scheme 1; see Definition \ref{Scheme1_fully_def}.} 
\begin{definition}[\V{Picard-type iteration}]\label{D2}
  Let $u^0 \equiv u_0$ and iteratively define the sequence $(u^{k+1}, W^k)_{k\in \mathbb{N}\cup\{0\}}$  as the solution to the following weak form: for almost all $(t,x) \in S\times \Omega$,
\begin{subequations}\label{weakform_iter}
\begin{align}
       \langle \partial_t u^{k+1}, \phi \rangle+\int_{\Omega} D^*(W^{k}) \nabla u^{k+1} \cdot\nabla \phi \di{x} &=  \int_{\Omega} f \phi \di{x}, \label{weakform_itermacro}\\
       \int_{Y}\big(D\nabla_y w_i^k  - G(u^k(t,x))Bw_i^k\big)\cdot \nabla_y \psi\di{y}
       &=\int_{Y} \mathrm{div}_y( D e_i)\psi \di{y}-\int_{\Gamma_N} De_i\cdot n_y\psi \di{\sigma},\label{weakform_itermicro}
\end{align}
\end{subequations}
for all  $(\phi, \psi)\in H^1_{\SN{0}}(\Omega)\times H_{\#}^1(Y)$, $i\in\{1,2\}$, and additionally $u^{k+1}(0,\cdot) = u_0$.
\end{definition}

The central idea behind this \SN{iterative} \V{method is to decouple the two-scale problem by fixing the macroscopic solution from the previous iteration and then solving the corresponding family of microscopic cell problems. 
We recall this iterative method because it gives the analytical basis for scheme 1, the fully discrete formulation introduced below}. \V{The analytical results presented here heavily rely on results in \cite{raveendran2023strongly}.
For instance, there we established} the well-posedness and $k$ independent energy estimates of this iterative scheme. 
Moreover, we also ensured that the sequence of macroscopic solutions, $(u^k)_{k\in \mathbb{N}\cup\{0\}}$, converges to the weak solution $u$ of the original elliptic-parabolic Problem $(P)$. 
However, the convergence of the microscopic sequence, $(W^k)_{k\in \mathbb{N}\cup\{0\}}$, is not established in the previous literature. 
In the following Theorem, we remedy this gap and state the full convergence result. 

\begin{theorem}
\label{theorem_well_posedness}
   Assume \ref{A1}--\ref{A4} hold and set $u^0 = u_0$. Then there exists a sequence 
    \[
    (u^{k+1},W^k)_{k\in \mathbb{N}\cup\{0\}}\subset\mathcal{U}\times L^\infty(S\times\Omega;\mathcal{W}^2)
    \]
    such that for each $k\in\mathbb{N}\cup\{0\} $, the pair $(u^{k+1},W^k)$ uniquely solves  \eqref{weakform_itermacro}--\eqref{weakform_itermicro} in the sense of \cref{D2}, $u^k,|\nabla u^{k}|\in L^{\infty}(S\times\Omega)$ and satisfy
   \begin{subequations}

\begin{align}\label{eq:ukbound}
  \|u^{k}\|_{L^{\infty}(S\times\Omega)}&\leq \|u_0\|_{L^\infty(\Omega)}+T\|f\|_{L^{\infty}(S\times\Omega)},
\end{align}  
     \begin{equation}\label{eq:gradukbound}
        { \|\nabla u^{k}\|_{L^{\infty}(S\times\Omega)}\leq C,}
    \end{equation}
    \end{subequations}
    where $C>0$ independent of $k$.  
    Moreover, we have
    \begin{subequations}
    \begin{alignat}{4}
         u^{k}&\rightarrow u \hspace{1cm} &&\mbox{strongly in} \hspace{.5cm} &&L^2(S\times\Omega)\label{T3e1},\\
         \nabla u^{k}&\rightharpoonup\nabla u \hspace{1cm} &&\mbox{weakly in} \hspace{.5cm} &&L^2(S\times\Omega)\label{T3e2},\\
         \partial_t u^{k}&\rightharpoonup\partial_t u \hspace{1cm} &&\mbox{weakly in} \hspace{.5cm} &&L^2(S;H^{-1}(\Omega))\label{T3e3},\\
         D^*(W^k) &\rightarrow D^*( W )\hspace{1cm} &&\mbox{strongly in} \hspace{.5cm} &&L^2(S\times\Omega)\label{T3e4},
    \end{alignat}
    and for a.e. $(t,x)\in S\times \Omega$ it also holds that  
    \begin{equation}\label{T3e5}
         W^k \rightarrow W  \hspace{1cm}\mbox{strongly in} \hspace{.5cm} (H^1(Y))^2.
    \end{equation}
    \end{subequations}
    Finally, $(u,W)\in \mathcal{U}\times L^\infty(S\times\Omega;\mathcal{W}^2)$ solves Problem (P) in the sense of \cref{D1}.
\end{theorem}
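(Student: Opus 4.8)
The bounds \eqref{eq:ukbound} and \eqref{eq:gradukbound}, the unique solvability of each pair $(u^{k+1},W^k)$, and the convergences \eqref{T3e1}--\eqref{T3e4} together with the conclusion that the limit solves Problem $(P)$ are exactly the content of \cite[Theorem 2 and Lemma 7]{raveendran2023strongly}, so I would simply quote those results. The genuinely new claim to be proved here is \eqref{T3e5}: the pointwise-in-$(t,x)$ strong convergence $W^k\to W$ in $(H^1(Y))^2$. The plan is to exploit the fact that, for fixed $(t,x)$, both $W^k(t,x,\cdot)$ and $W(t,x,\cdot)$ solve linear cell problems of the same type, differing only in the coefficient $G(u^k(t,x))$ versus $G(u(t,x))$, and then to propagate the already-established convergence $u^k\to u$ through the (locally Lipschitz) map $G$ and through the linear solution operator of the cell problem.

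The key steps, in order: First I would fix $(t,x)$ in the full-measure set where $u^k(t,x)\to u(t,x)$ (such a set exists after passing to a subsequence via \eqref{T3e1}, but by uniqueness of $W$ the whole sequence converges). Subtracting \eqref{weakform_itermicro} from \eqref{weakformmicro} componentwise gives, for each $i$ and all $\psi\in H^1_{\#}(Y)$,
\begin{equation*}
\int_Y D\nabla_y(w_i^k-w_i)\cdot\nabla_y\psi\di{y}=\int_Y\big(G(u^k(t,x))Bw_i^k-G(u(t,x))Bw_i\big)\cdot\nabla_y\psi\di{y}.
\end{equation*}
Next I would split the right-hand side as $G(u^k(t,x))B(w_i^k-w_i)+\big(G(u^k(t,x))-G(u(t,x))\big)Bw_i$. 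Testing with $\psi=w_i^k-w_i$ (legitimate since both have zero mean, hence lie in $\mathcal{W}$), using coercivity \ref{A1} on the left, using $B\in L^\infty$ \ref{A3}, the $k$-independent energy bound on $\|w_i^k\|_{H^1(Y)}$ from \cite{raveendran2023strongly} (which also gives $\|w_i^k\|_{L^2(Y)}\le C$ by Poincar\'e--Wirtinger), and local Lipschitz continuity of $G$ \ref{A2} together with the uniform bound \eqref{eq:ukbound} (so all relevant arguments of $G$ lie in a fixed compact set), I would obtain an estimate of the form
\begin{equation*}
\theta\|\nabla_y(w_i^k-w_i)\|_{L^2(Y)}^2\le C|G(u^k(t,x))|\,\|w_i^k-w_i\|_{L^2(Y)}\|\nabla_y(w_i^k-w_i)\|_{L^2(Y)}+C\,|u^k(t,x)-u(t,x)|\,\|\nabla_y(w_i^k-w_i)\|_{L^2(Y)}.
\end{equation*}
Applying Poincar\'e--Wirtinger to $w_i^k-w_i\in\mathcal{W}$ on the first term absorbs part of it into the left (or, equivalently, uses the Lax--Milgram/G\r{a}rding structure — here directly, since the bilinear form $\int_Y D\nabla_y\cdot\nabla_y\cdot$ is coercive on $\mathcal{W}$ and the drift term is a compact perturbation controlled by $|G|$ times the Poincar\'e constant); after absorption one is left with $\|\nabla_y(w_i^k-w_i)\|_{L^2(Y)}\le C|u^k(t,x)-u(t,x)|\to 0$, and Poincar\'e--Wirtinger again upgrades this to full $H^1(Y)$ convergence. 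Finally, since this holds for a.e. $(t,x)$ and the limit is the unique $W$ from \cref{theorem_existance_of_p}, the statement follows; the identification of the limit as a weak solution of $(P)$ is then inherited from \cite{raveendran2023strongly}.

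The main obstacle is the absorption step: one must ensure the drift term $\int_Y G(u^k)B(w_i^k-w_i)\cdot\nabla_y(w_i^k-w_i)$ can genuinely be absorbed into the coercive term uniformly in $k$. This requires a uniform (in $k$, $t$, $x$) bound on $|G(u^k(t,x))|$ — which is where \ref{A2} combined with the $k$-independent $L^\infty$ bound \eqref{eq:ukbound} is essential — and a smallness/structural argument. In the present linear elliptic setting this is not actually a restriction: the cell problem is uniquely solvable for \emph{every} admissible constant coefficient $G$-value by the Fredholm alternative (the homogeneous problem has only constant solutions, excluded by the zero-mean constraint), so the solution operator $G\mapsto w_i$ is well-defined and, by a standard continuous-dependence estimate for this operator, continuous — which is exactly \eqref{T3e5}. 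I would therefore either invoke this continuous-dependence property of the cell-problem solution operator directly, or carry out the absorption using the full bilinear form (coercive on $\mathcal{W}$ once the divergence-free, tangential drift $B$ from \ref{A3} is taken into account, which makes the antisymmetric part of the drift term vanish on the diagonal) rather than the diffusion part alone. Everything else is routine: Cauchy--Schwarz, Young's inequality, and Poincar\'e--Wirtinger on $\mathcal{W}$.
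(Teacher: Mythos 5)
Your proposal is correct and follows essentially the same route as the paper: subtract the two cell-problem weak forms, test with $\overline{w}_i^k:=w_i^k-w_i$, kill the diagonal drift term using $\mathrm{div}\,B=0$, $B\cdot n_y=0$ and periodicity (so that $\int_Y G\,B\,\overline{w}_i^k\cdot\nabla_y\overline{w}_i^k\,\mathrm{d}y=\tfrac12\int_Y G\,B\cdot\nabla_y(\overline{w}_i^k)^2\,\mathrm{d}y=0$), control the remaining term via the local Lipschitz continuity of $G$, the uniform $L^\infty$ bound on $u^k$, and the $k$-independent $L^2$ bound on the cell functions, then conclude with Poincar\'e--Wirtinger and the a.e.\ convergence $u^k\to u$. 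The one caution is that your displayed estimate retains the diagonal term and proposes absorbing it through Poincar\'e--Wirtinger, which would require smallness of $|G|\,\|B\|_{L^\infty}$ relative to $\theta$ and is not available in general; the exact cancellation you identify in your closing paragraph is what the paper actually uses, so it should be the primary argument rather than a fallback.
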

\begin{proof}
\V{For the well-posedness of the iterative problem, the uniform bound \eqref{eq:ukbound}-\eqref{eq:gradukbound}  and the macroscopic convergence \eqref{T3e1}-\eqref{T3e4}, we refer \cite[Theorem 1, Theorem 2, Lemma 3]{raveendran2023strongly}.}
% The existence and uniqueness of the elements in the sequence 
% $$(u^{k+1},W^k)_{k\in \mathbb{N}\cup\{0\}}\subset\mathcal{U}\times L^\infty(S\times\Omega;\mathcal{W}^2)$$
% follow directly from \cite[Theorem 1]{raveendran2023strongly} and inequality \eqref{eq:gradukbound} is a consequence of \cite[Lemma 3]{raveendran2023strongly}. 
% For the proof of \eqref{T3e1}--\eqref{T3e4}, we refer the reader to \cite[Theorem 2]{raveendran2023strongly}.
It remains though to prove \eqref{T3e5}. 

To this end, we follow similar techniques as those used in \cite[Lemma 1]{raveendran2023strongly}. 
Let $k\in\mathbb{N}$ be arbitrarily fixed and define $\overline{w}_i^k:=w_i^k-w_i$. 
Taking the test function $\psi:=\overline{w}_i^k$ in both \eqref{weakformmicro} and \eqref{weakform_itermicro} and subtracting the obtained results yields the following expression:
\begin{equation}\label{eq:wk-w}
    \int_{Y}\big(D(y)\nabla_y \overline{w}_i^k- B(y)(G(u^k)w_i^k-G(u)w_i))\big)\cdot \nabla _y \overline{w}_i^k\di{y}=0.
\end{equation}
Adding 
$0$ to equation \eqref{eq:wk-w} in the form of the expression $\int_Y (B(y)G(u)w_i^k- B(y)G(u)w_i^k)\cdot \nabla _y \overline{w}_i^k\di{y}$  and then using \ref{A1}, we have
\begin{equation}\label{eq:add and substract BGw_i}
    \theta\int_{Y} |\nabla_y \overline{w}_i^k |^2 \di{y}
    \leq \int_{Y}B(y)\big((G(u^k)-G(u))w_i^k+G(u)\overline{w}_i^k\big)\cdot \nabla _y \overline{w}_i^k \di{y}.
\end{equation}
Benefiting from the properties of $B$ described in \ref{A2}, the periodicity of $w_i,w_i^k$ and the corresponding integration by parts lead to
\begin{align}
    \int_{Y}B(y)G(u)\overline{w}_i^k\cdot \nabla _y \overline{w}_i^k \di{y}=\frac{1}{2}\int_{Y}B(y)G(u) \cdot \nabla _y (\overline{w}_i^k)^2\di{y}
    =0\label{eq:intb=0}.
\end{align}
Since $G$ is locally Lipschitz and $u^k$ and $u$ are bounded, we have 
\begin{equation}\label{eq:lipschitz count of G}
    |(G(u^k)-G(u))|\leq C|u^k-u|,
\end{equation}
almost everywhere. 
From \cite[(ii) of Lemma 1]{raveendran2023homogenization} and Poincaré-Wirtinger's inequality, there exist $C>0$ independent of $k$, such that
\begin{equation}\label{eq:l2bound of wk}
    \int_Y |w_i^k|^2\di{y}\leq C.
\end{equation}
Using   \eqref{eq:intb=0} and \eqref{eq:lipschitz count of G}  on \eqref{eq:add and substract BGw_i}, we get
\begin{equation}\label{eq:}
    \theta\int_{Y} |\nabla_y \overline{w}_i^k |^2 \di{y}
    \leq C \int_{Y}|u^k-u||w_i^k|| \nabla _y \overline{w}_i^k |\di{y}.
\end{equation}
By Young's inequality in \eqref{eq:}, we obtain
\begin{equation*}
    \theta\int_{Y} |\nabla_y \overline{w}_i^k |^2 \di{y}
    \leq C(\theta)|u^k-u|^2 \int_{Y}|w_i^k|^2 \di{y}+\frac{\theta}{2}\int_{Y}| \nabla _y \overline{w}_i^k |\di{y}.
\end{equation*}
After conveniently rearranging terms and using \eqref{eq:l2bound of wk},  we are led to
\begin{equation}\label{eq:wk convergence}
    \int_{Y} |\nabla_y \overline{w}_i^k |^2 \di{y}
    \leq C |u^k-u|^2.
\end{equation}
Notice that, from \eqref{T3e1}, we get $u^k\rightarrow u$ for almost every $(t,x)\in S\times \Omega.$ Consequently, from \eqref{eq:wk convergence} we also have for almost every $(t, x)\in S\times \Omega$ that the following convergence holds true
\begin{equation}\label{eq:h1convergence}
    \int_{Y} |\nabla_y \overline{w}_i^k |^2 \di{y}\rightarrow 0.
\end{equation}
Since $\int_Y w^i \di{y}=0 $, using \eqref{eq:h1convergence} together with Poincaré-Wirtinger's inequality (see \cite[Chapter 9]{brezis2011functional}), we obtain \eqref{T3e5}.
\end{proof}

With this convergence result, we can then make use of standard finite element methods on the iteration problem \eqref{weakform_iter}\SN{, which yields a numerical scheme, referred to as scheme 1}.
We begin by discussing the discretization of the space and time domains.
To deal with the discretization in time, we decompose $\bar S= [0, T]$ into $M$ sub-intervals. 
Let $\Delta t := T/M$ be the uniform time step size with $t_n := n \Delta t$ for $n\in \{0, \cdots, M\}$. We use  $u_{n-1}^{k, H}$ to denote the fully discrete $k$th iterative solution of $u$ at time $t = t_{n-1}$. 
Similarly,  $w_{i, n-1}^{k, h}(\tilde{x}, \cdot)$ represents  the corresponding iterative solution of $w$ at time $t = t_{n-1}$ and macroscopic \SN{discrete} node $\tilde{x}$ \SN{of $\mathcal{D}_H$}.
Once we solve for $w_{i, n-1}^{k,h}(\tilde{x}, \cdot)$  for all nodes $\tilde{x}$, we compute estimations of the dispersion tensor via $D^{*}(w_{1, n-1}^{k,h}, w_{2, n-1}^{k,h})$.
By utilizing an implicit Euler method to approximate the time derivative,  the fully discrete weak formulation for the macroscopic equations reads as follows:
\begin{equation}
\label{weak_parabolic_scheme1}	\displaystyle \int_{\Omega}\frac{u_{n}^{k+1, H}-u_{n-1}^{k+1, H}}{\Delta t} \Psi \di{x} +  \int_{\Omega}D^{*}(w_{1, n-1}^{k,h}, w_{2, n-1}^{k,h})  \nabla u^{k+1, H}_n\cdot\nabla \Psi \di{x}= \int_{\Omega}f_{n} \Psi \di{x},
\end{equation}
 for all $\Psi \in \mathbb{V}_H$.

Given $u_{n-1}^{k+1, H}$,  we can now compute $u_{n}^{k+1, H}$ by solving the following macroscopic equations
\begin{equation}
\label{S3}	\displaystyle \int_{\Omega}u_{n}^{k+1, H} \Psi \di{x} + \Delta t \int_{\Omega}D^{*}(w_{1, n-1}^{k,h}, w_{2, n-1}^{k,h}) \nabla u^{k+1, H}_n\cdot\nabla \Psi \di{x}=  \int_{\Omega}(\Delta tf_{n} + u_{n-1}^{k+1, H}) \Psi \di{x},
\end{equation}
 for all $\Psi \in \mathbb{V}_H$.
The fully discrete weak formulation of our original problem treated with \SN{Picard-type iteration} is as follows: 

\begin{definition} [Scheme 1]
  Given $u^{k, H}_n, u^{k+1, H}_{n-1}\in \mathbb{V}_H$ with $u_0^{k+1, H} = u_0^{k, H} = \tilde{u}$. Here $\tilde{u}$ denotes the projection of the given initial condition $u_0$ in $\mathbb{V}_H$.  We define  the pair 
  $$(W^{k, h}_{n-1} (\tilde{x}, \cdot), u^{k+1, H}_n)\in \mathbb{W}_h^2 \times \mathbb{V}_H$$ to be the fully discrete weak solution to \eqref{weakformmacro}--\eqref{weakformmicro} \SN{after the Picard-type iteration}  if for all $v\in \mathbb{W}_h$,  $\Psi \in \mathbb{V}_H$ and for all $n \in \{1, \cdots, M \}$ the following identities hold:
 \begin{subequations} \label{Scheme1_fully_def}
\begin{align}
 \nonumber \int_Y \left(D(y)\nabla_y w_{i, n-1}^{k, h} (\tilde{x}, y)-  G(u^{k, H}_{n-1}(\tilde{x}))  B(y) w_{i, n-1}^{k, h} (\tilde{x}, y)  \right)&\cdot \nabla_y v(y) \di{y} \\
 \label{scheme1_micro_fully_def}  +  \int_{\Gamma_N} D(y)e_i\cdot n_y v \di{\sigma} & =  \int_Y  \nabla_y D(y) \cdot  e_i v \di{y},  \\
  \label{scheme1_macro_fully_def}  
  \displaystyle \int_{\Omega}u_{n}^{k+1, H} \Psi \di{x} + \Delta t \int_{\Omega}D^{*}(w_{1, n-1}^{k,h}, w_{2, n-1}^{k,h}) \nabla u^{k+1, H}_n\cdot\nabla \Psi \di{x} &=  \int_{\Omega}(\Delta tf_{n} + u_{n-1}^{k+1, H}) \Psi \di{x}.
\end{align} 
\end{subequations}
This scheme is referred to as \emph{Scheme~1} throughout the paper.
\end{definition}

\subsection{Scheme $2$: linearization by time stepping} \label{scheme2}

While analytical results for the \SN{Picard-type iteration} are readily available, the decoupling through iteration turns out to be, as we will see in Section \ref{numerical_experiments}, computationally taxing due to the superfluous iteration step. 
As an alternative, we construct a more natural decoupling via time-stepping to approximate weak solutions to Problem $(P)$. 
Using the same notation as in the previous two sections, given  $u_{n-1}^{H}$,  we first compute the approximation $w_{i, n-1}^{ h}(\tilde{x}, y)$ and then we compute $u_{n}^{H}$. 
We can then continue this calculation for every time step. 
More precisely,  the fully discrete weak formulation of the problem treated with scheme $2$ is as follows: 

\begin{definition} [Scheme 2]
 Given $u_{n-1}^H \in \mathbb{V}_H$ with $u_0^{H} = \tilde{u}$.  We define the pair  $$(W_{n-1}^h, u_n^H)\in \mathbb{W}_h^2 \times \mathbb{V}_H$$ to be the fully discrete weak solution to \eqref{weakformmacro}--\eqref{weakformmicro} with scheme $2$ if for all $v\in \mathbb{W}_h$,  $\Psi \in \mathbb{V}_H$ and for all $n \in \{ 1, \cdots, M \}$ the following  identities hold:
\begin{subequations}\label{Scheme2_fully}
\begin{align}
 \nonumber \int_Y\left(D(y)\nabla_y w_{i, n-1}^{h} (\tilde{x}, y)- G(u^{H}_{n-1}(\tilde{x})) B(y) w_{i, n-1}^{h} (\tilde{x}, y)\right)  \cdot & \nabla_y v(y) \di{y} \\
 \label{scheme2_micro_fully_def} +  \int_{\Gamma_N} D(y)e_i\cdot n_y v \di{\sigma}  &=  \int_Y  \nabla_y D(y) \cdot  e_i v \di{y},  \\
  \label{scheme2_macro_fully_def} 
  \int_{\Omega}u_{n}^{H} \Psi \di{x} + \Delta t \int_{\Omega}D^{*}(w_{1, n-1}^{h}, w_{2, n-1}^{h}) \nabla u^{H}_n\cdot\nabla \Psi \di{x}&=  \int_{\Omega}(\Delta tf_{n} + u_{n-1}^{H}) \Psi \di{x}.    
\end{align} 
\end{subequations} 

\end{definition}

\SN{This scheme is referred to as \emph{Scheme~2}. We refer the reader to Theorem 2 in \cite{olivares2021two}, where the convergence of the time-discrete problem is established. This result provides a suitable framework for establishing the well-posedness results and convergence of Schemes 1 and 2. A complete mathematical analysis is beyond the scope of the present work and therefore postponed to follow-up work.}

%Based on the structure of the problem and looking at the simulation results reported in \cref{numerical_experiments}, we expect our scheme $2$ to be well-defined, convergent, and consistent.
%We postpone however its mathematical study, like proving the well-posedness of \eqref{scheme2_micro_fully_def}--\eqref{scheme2_macro_fully_def} and related convergence properties, to follow-up work.

\section{Implementation} \label{implementation}
The goal of this section is to present the implementation of the finite element method outlined in \cref{Iter_discrete} in conjunction with the numerical schemes discussed in \cref{scheme1} and \cref{scheme2}.  
For our implementation, we opt to use the FEniCS platform.
We refer the interested reader, for instance, to \cite{logg2012automated} for detailed information on solving partial differential equations using FEniCS.
Initially, we provide a brief overview of our implementation strategy where we provide algorithms for each scheme. 
 We then discuss the precomputing strategy which we will later show considerably improves the computing time for both schemes.  

 \subsection{Overview of implementation}
\SN{The implementation is based on a two-scale finite element discretiztion as described in Section \ref{Iter_discrete}. We use the same microscopic mesh $\mathcal{K}_h$ and finite element space $\mathbb{W}_h$ for all macroscopic discrete node $\tilde{x} \in \mathcal{D}_H$. The dependence on the macroscopic position enters only through the parameter $G(u(\tilde{x}))$ appearing in the microscopic cell problem. Consequently,  for each macroscopic discrete node $\tilde{x}$, we solve a cell problem on the same reference microscopic mesh, but with different coefficients determined by the local macroscopic solution. For simplicity, both the macroscopic and microscopic finite element spaces are chosen as the $P_1$ Lagrange finite element space. It is worth mentioning that the proposed methodology is not restricted to $P_1$ elements; higher-order finite element spaces could be employed as well, although they are not considered in the present work.}\\
  We define a microscopic system by assigning a microscopic grid for each degree of freedom (i.e., node) on the macroscopic grid. 
  In our finite element framework, we solve the system by decoupling the microscopic and macroscopic formulation as described in schemes. In scheme $1$, we use the macroscopic solutions from the previous iteration to solve the microscopic elliptic problem and compute the dispersion tensor for the next iteration. Utilizing the computed dispersion tensor, we then solve the parabolic problem to get a macroscopic solution for the next iteration. 
  The iteration continues until the difference (measured in the $L^2(S; L^2(\Omega))$ norm) between consecutive iterations reaches a predefined tolerance value, $\epsilon$. 
  We describe this computation process in \cref{alg:iterative}, which was originally presented in \cite{raveendran2023strongly}.    
\begin{algorithm}[h!]
\caption{Procedure to approximate the weak solution to Problem $(P)$ by scheme $1$.} \label{alg:iterative}
\begin{algorithmic}[1]
\State \texttt{Discretize the space microscopic domain $Y$ and macroscopic domain $\Omega$}
\State \texttt{Discretize the time domain $[0, T]$ with step size $\Delta t$}
\State \texttt{Solve the Stokes problem \eqref{stoke1}-\eqref{stoke4} to get $B(y)$}
\State \texttt{Choose data $f, D, G, u_0$}
\State \texttt{Set initial iteration $u^0=u_0$}
\State \texttt{Set tolerance value $\epsilon$}
\State \texttt{Set the maximum number of iterations,  Maxiter.}
\State \texttt{Initialize iteration and time. i.e. $iter =0$, $t=0$}
\While{$ iter < Maxiter$}
\State \texttt{Set $u_{\text{old}}=u_0$}
\State \texttt{Set $u = [u_{\text{old}}]$}
\For{\texttt{each time discrete node  on time domain}}
\For{\texttt{each node on macroscopic grid}}
        \State \texttt{Solve for $(w_1, w_2)$ using $G(u_{\text{old}})$}
      \EndFor
      \State \texttt{Compute  $D^*$ from $(w_1, w_2)$}
       \State \texttt{Solve for $u_{\text{new}}$ using $D^*$}
        \State \texttt{append $u$ with $u_{\text{new}}$}
       \State \texttt{$u_{\text{old}} \leftarrow u_{\text{new}}$}
    \EndFor
      \If{$\|u-u^0\|< \epsilon$}
    \State \texttt{Stop}
\EndIf 
\State{$u^0\leftarrow u$}
\EndWhile
\end{algorithmic}
\end{algorithm}

In scheme $2$, we utilize the given macroscopic solutions from the previous time step $n-1$ to solve the microscopic elliptic problem and compute the dispersion tensor. Employing the computed dispersion tensor, we then proceed to solve the parabolic problem to get a macroscopic solution for the next time step $n$.  We continue this time-stepping process until we achieve the final time $T$. The described approach is summarized in Algorithm \ref{alg:fully_discrete}.

  \begin{algorithm}[h!]
\caption{Procedure to  approximate the weak solution to Problem $(P)$ by scheme $2$.} \label{alg:fully_discrete}
\begin{algorithmic}[1]
\State \texttt{Discretize the space microscopic domain $Y$ and macroscopic domain $\Omega$}
\State \texttt{Discretize the time domain $[0, T]$ with step size $\Delta t$}
\State \texttt{Solve the Stokes problem \eqref{stoke1}-\eqref{stoke4} to get $B(y)$}
\State \texttt{Choose data $f, D, G, u_0$}
\State \texttt{Set $u_{\text{old}}=u_0$}
\For{\texttt{each time discrete node  on time domain}}
\For{\texttt{each node on macroscopic grid}}
        \State \texttt{Solve for $(w_1, w_2)$ using $G(u_{\text{old}})$}
      \EndFor
      \State \texttt{Compute  $D^*$ from $(w_1, w_2)$}
       \State \texttt{Solve for $u_{\text{new}}$ using $D^*$}
       \State \texttt{$u_{\text{old}} \leftarrow u_{\text{new}}$}
    \EndFor
\end{algorithmic}
\end{algorithm}

\subsection{Precomputing strategy}\label{precomputing_strategy}
As discussed in the previous section, the microscopic elliptic problem must be solved many times for each macroscopic node.
For both schemes, this process can be quite costly as superfluous calculations are not avoided.
Between these two schemes, the iterative scheme is even more expensive due to the additional loop due to the iteration step. 
However, the microscopic cell problems \eqref{cell3}-\eqref{homeqf} only see the macroscopic solution, $u$, point-wise and can therefore be essentially treated as a parameter.
This leads us to the a``precomputing strategy" which involves solving the microscopic problems \eqref{cell3}-\eqref{homeqf} for a range of macroscopic values \textit{a priori}. 

\begin{subequations}
To make the idea of the precomputing strategy rigorous, let us consider the following auxiliary equations 
\begin{align}
    \mathrm{div}_y\left(- D \nabla_y w_{i,p}+pB w_{i,p}\right)&=\mathrm{div}_y(D e_i) &\mbox{in}& \hspace{.2cm}  Y,\label{auxiliary1}\\
    \left( -D\nabla_y w_{i,p}+pB w_{i,p}\right)\cdot n_y &= \left( De_i\right)\cdot n_y &\mbox{on}& \hspace{.2cm} \Gamma_N,\label{auxiliary2}\\
    w_i\, &\mbox{ is $Y$--periodic},&&\label{auxiliary3}
    \end{align}
\end{subequations}
where $i\in\{1,2\}$ and $p\in [-L, L] \subset \mathbb{R}$. 
Relating back to Problem $(P)$, we chose $L$ as 
\begin{equation*}
    L=\max_{r\in[-m,m]} |G(r)|,
\end{equation*}
with
\begin{equation*}
m:=\|u_0\|_{L^\infty(\Omega)}+T\|f\|_{L^{\infty}(S\times\Omega)}.
\end{equation*}
This choice for $L$ is made based on the inequality $\|u\|_{L^\infty(S\times\Omega)}\leq\|u_0\|_{L^\infty(\Omega)}+T\|f\|_{L^{\infty}(S\times\Omega)}$ proven in \cite[Theorem 1]{raveendran2023strongly}.
We also define the matrix-valued function $\overline{D} \colon [-L, L]\rightarrow M_{2\times2}$, such that
\begin{equation}\label{Eq:DBar}
    \overline{D}(p):=D^*(W_p),
\end{equation}
where $W_p:=(w_{1,p},w_{2,p})$.

Then, the precomputing strategy involves solving system \eqref{auxiliary1}-\eqref{auxiliary3} for a fixed number of discrete values of $p$ and interpolating the entries in the dispersion matrix \eqref{Eq:DBar}.
In other words,  the precomputing strategy reduces to the following steps:
\begin{enumerate}[align=left, label=  Step \arabic*:]
 \item \label{Step1}  Discretize the domain $[-L, L]$ by introducing  a finite set of points given by  $-L\leq p_1<p_2<\cdots <p_r\leq L$ with 
\begin{equation}\label{delta}
    \delta:=\max_{k\in \{1,\cdots, r-1\}}(p_{k+1}-p_{k}).
\end{equation} 
Observe that we do not set any {\em a priori} rules regarding how many grid points to choose. However, adding more points naturally increases the computational cost while improving in the same time the quality of approximation, compare Theorem \ref{Thrm:InterpErrorEstimate}.
\item  Solve the auxiliary problem on the microscopic domain for $p = p_k, k \in\{1,2, \cdots, r\}$ and compute each component of the dispersion tensor $\bar{D}(p_k)$ for $k \in \{1, 2, \cdots, r\}$. It is important to note that this step is fully parallelizable, as the computations for different values of $p_k$ are completely independent and can be performed simultaneously. \label{Step2}
\item  Utilize $\bar{D}(p_k)$ to construct an interpolated dispersion tensor $D^{\rm int}(p)$ for $p \in [-L, L]$.   \label{Step3}
\end{enumerate}
The procedure \ref{Step1}--\ref{Step3} is usually done only once as a preprocessing phase; see Figure \ref{Fig:Diffusion_tensor} for a visualization. It is worth mentioning that we only require \textit{a priori} macroscopic information. 
Now, for each time step and iteration, we use $D^{\rm int}(p)$ to evaluate the dispersion tensor when updating the macroscopic solution for the next time step. 

In our simulation, we use the linearly interpolated values inside the domain of $p \in [-L, L]$ and a constant extrapolation as boundary values if $D^{\rm int}(p)$ needs to be computed outside the domain $[-L, L]$. 
We now show, under additional assumptions, that the difference between the solution of problem $(P)$ and the solution calculated by the precomputing strategy is comparable to the value of $\delta$. 
  
 \begin{theorem}\label{Thrm:InterpErrorEstimate}
     Assume \ref{A1}--\ref{A4} hold. Let $(u,W)\in \mathcal{U}\times  L^\infty(S\times\Omega;\mathcal{W}^2)$ 
 be a weak solution to the Problem $(P)$ and $(u^{\rm int},W^{\rm int})\in \mathcal{U}\times  L^\infty(S\times\Omega;\mathcal{W}^2)$ be a weak solution for the problem 
\begin{align}
      \langle\partial_t u^{\rm int}, \phi \rangle+\int_{\Omega} D^{\rm int}(G(u^{\rm int})) \nabla u^{\rm int} \cdot\nabla \phi \di{x} &=  \int_{\Omega} f \phi \di{x},\label{eq:interpolated solution}\\
     \int_{Y}\big(D\nabla_y w_i^{\rm int}  - G(u^{\rm int}(t,x))Bw_i\big)\cdot \nabla_y \psi\di{y}&=\int_{Y} \mathrm{div}_y( D e_i)\psi \di{y}-\int_{\Gamma_N} De_i\cdot n_y\psi \di{\sigma}\nonumber\\
     &\hspace{2cm} \mbox{for a.e. }(t,x)\in S\times \Omega,
\end{align} 
with $u^{\rm int}(0)=u_0$, $i\in\{1,2\}$ and for every $(\phi, \psi)\in H^1(\Omega)\times H_{\#}^1(Y)$. Assume $\nabla u^{\rm int}\in L^{\infty}(S\times \Omega)$, then, we have
\begin{align}
    \|u-u^{\rm int}\|_{L^2(S\times\Omega)}&\leq  C\delta,\label{eq:interpolation l2 error}\\
    \|\nabla(u-u^{\rm int})\|_{L^2(S\times\Omega)}&\leq  C\delta,\label{eq:interpolation h1 error}
\end{align} 
where $\delta$ is given by \eqref{delta}.
 \end{theorem}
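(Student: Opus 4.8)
The plan is to estimate the difference $u - u^{int}$ by testing the difference of the two macroscopic weak formulations \eqref{weakformmacro} and \eqref{eq:interpolated solution} against $u - u^{int}$, and controlling the resulting ``dispersion mismatch'' term by splitting it into two parts: (i) the difference between $D^*(W)$ evaluated along the true microscopic solution and $\overline{D}(G(u))$ (which should vanish, since $\overline{D}(p) = D^*(W_p)$ is exactly the cell-problem-averaged tensor at parameter $p = G(u)$), and (ii) the interpolation error $\overline{D}(p) - D^{int}(p)$, which is $O(\delta)$ by standard piecewise-linear interpolation estimates provided the map $p \mapsto \overline{D}(p)$ is Lipschitz. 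So the first real step is to establish that $p \mapsto \overline{D}(p)$ is Lipschitz on $[-L,L]$: this follows from differentiating (or taking differences in) the auxiliary cell problem \eqref{auxiliary1}--\eqref{auxiliary3} in $p$, using the uniform ellipticity \ref{A1}, the bound $B \in L^\infty$ from \ref{A3}, and the $L^2$-bound on $w_{i,p}$ (analogous to \eqref{eq:l2bound of wk}), exactly as in the proof of \eqref{eq:wk convergence} above — the energy estimate gives $\|\nabla_y(w_{i,p} - w_{i,q})\|_{L^2(Y)} \le C|p-q|$, hence $|\overline{D}(p) - \overline{D}(q)| \le C|p-q|$ through the definition \eqref{macrodiff}.

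Next I would write, for a.e.\ $(t,x)$, the chain of inequalities
\[
|D^{int}(G(u^{int})) - D^*(W)| \le |D^{int}(G(u^{int})) - \overline{D}(G(u^{int}))| + |\overline{D}(G(u^{int})) - \overline{D}(G(u))|,
\]
where I have used that $\overline{D}(G(u)) = D^*(W_{G(u)}) = D^*(W)$ because $W(t,x,\cdot)$ solves precisely the cell problem \eqref{cell3}--\eqref{homeqf} with parameter $G(u(t,x))$. The first term is bounded by $C\delta$ uniformly by the interpolation estimate from Step~1; the second is bounded by $C|G(u^{int}) - G(u)| \le C|u^{int} - u|$ using the Lipschitz property of $\overline{D}$ together with the local Lipschitz continuity of $G$ from \ref{A2} and the $L^\infty$-bounds on $u, u^{int}$ (Theorem~\ref{theorem_existance_of_p} and the hypothesis of the present theorem, so that the relevant arguments stay in a fixed compact set). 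A symmetric remark handles the microscopic equations: subtracting the two $w_i$-equations and repeating the argument \eqref{eq:wk-w}--\eqref{eq:wk convergence} verbatim gives $\|\nabla_y(w_i^{int} - w_i)\|_{L^2(Y)} \le C|u^{int} - u|$ pointwise, which is not even needed for the macroscopic estimate but records the companion bound.

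Then I would carry out the energy estimate for $e := u - u^{int}$. Testing the difference of \eqref{weakformmacro} and \eqref{eq:interpolated solution} with $e$ yields
\[
\frac{1}{2}\frac{d}{dt}\|e\|_{L^2(\Omega)}^2 + \int_\Omega D^*(W)\nabla e \cdot \nabla e \di{x} = \int_\Omega \big(D^{int}(G(u^{int})) - D^*(W)\big)\nabla u^{int} \cdot \nabla e \di{x}.
\]
Using ellipticity on the left and the pointwise bound above plus $\|\nabla u^{int}\|_{L^\infty} \le C$ on the right, then Young's inequality to absorb $\tfrac{\theta}{2}\|\nabla e\|^2$ into the left side, I get
\[
\frac{d}{dt}\|e\|_{L^2(\Omega)}^2 + \theta\|\nabla e\|_{L^2(\Omega)}^2 \le C\|e\|_{L^2(\Omega)}^2 + C\delta^2.
\]
Since $e(0) = 0$, Grönwall's inequality gives $\|e(t)\|_{L^2(\Omega)}^2 \le C\delta^2 e^{CT}$ for all $t$, which is \eqref{eq:interpolation l2 error}; integrating the differential inequality in $t$ over $S$ and using the bound just obtained then yields $\theta\int_S\|\nabla e\|_{L^2(\Omega)}^2 \le C\delta^2$, which is \eqref{eq:interpolation h1 error}.

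The main obstacle is the Lipschitz continuity of $p \mapsto \overline{D}(p)$ and, hand in hand with it, the bookkeeping needed to keep all nonlinear arguments confined to a fixed compact set so that $G$'s local Lipschitz constant and the ellipticity/interpolation constants can be taken uniform in $(t,x)$ and in $\delta$ — in particular one must be careful that $G(u^{int})$ may land outside $[-L,L]$ (hence the constant extrapolation of $D^{int}$), and check that the extrapolated tensor still inherits the needed bound and $\delta$-closeness to $\overline{D}$ near the endpoints. Everything else is a routine Grönwall argument structurally identical to the convergence proof of Theorem~\ref{theorem_well_posedness}.
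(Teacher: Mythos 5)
Your proposal is correct and follows essentially the same route as the paper's proof: the same decomposition via adding and subtracting $\overline{D}(G(u^{int}))$, the same use of the Lipschitz continuity of $p\mapsto\overline{D}(p)$ (which the paper simply cites from \cite[Section 3.2, Lemmas 2 and 3]{raveendran2023strongly} rather than re-deriving by the cell-problem energy argument you sketch), the identification $D^*(W)=\overline{D}(G(u))$, the $O(\delta)$ piecewise-linear interpolation bound, and the concluding Young--Grönwall argument with the $H^1$ estimate recovered by reinserting the $L^2$ bound. Your additional remarks on the extrapolation outside $[-L,L]$ and on keeping the arguments of $G$ in a fixed compact set are sound bookkeeping points that the paper passes over silently.
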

\begin{proof}
    We subtract \eqref{eq:interpolated solution} from the weak formulation of Problem $(P)$  and choose $\phi=u-u^{\rm int}$, we get
    \begin{equation*}
         \langle\partial_t (u-u^{\rm int}), u-u^{\rm int} \rangle+\int_{\Omega} (D^*(W) \nabla u- D^{\rm int}(G(u^{\rm int})) \nabla u^{\rm int})\cdot\nabla (u-u^{\rm int}) \di{x} =  0.
    \end{equation*}   
    Using the uniform positivity of $D^*(W)$ (see \cite[Lemma 3]{raveendran2023strongly}),  we have
    \begin{equation}\label{eq:add and subtract dstar}
        \frac{1}{2}\frac{\mathrm d}{\mathrm dt}\|u-u^{\rm int}\|_{L^2(\Omega)}^2+\vartheta \|\nabla(u-u^{\rm int})\|_{L^2(\Omega)}^2\leq \int_\Omega (D^*(W)-D^{\rm int}(G(u^{\rm int})))\nabla u^{\rm int} \cdot\nabla (u-u^{\rm int}) \di{x},
    \end{equation}
    for some $\vartheta >0.$
    From the definition of $\overline{D}$, 
    \begin{align}
       D^*(W)=\overline{D}(G(u)).
    \end{align}
   Recalling \cite[Section 3.2, Lemma 2 and Lemma 3]{raveendran2023strongly}, we have that $\overline{D}(\cdot)$ is Lipschitz, and hence,  together with assumption \ref{A2}, we get
    \begin{align*}
        |\overline{D}(G(u))-\overline{D}(G(u^{\rm int}))|
        \leq C |u-u^{\rm int}|.
    \end{align*}
    Since $D^{\rm int}(\cdot)$ is the linear interpolation of $\overline{D}(\cdot)$, and $\overline{D}$ is Lipschitz continuous, using triangle inequality one can show 
    \begin{align}\label{eq:interpolation_error}
        |\overline{D}(G(u^{\rm int}))-D^{\rm int}(G(u^{\rm int}))|\leq C \delta,
    \end{align}  
    for some $C>0.$
    Since we assumed $\nabla u^{\rm int}\in L^{\infty}(\Omega)$, adding and subtracting $\overline{D}(G(u^{\rm int}))$ to the right-hand side of \eqref{eq:add and subtract dstar} and using the aforementioned estimates and triangle inequality, yields
    \begin{align}
        \int_\Omega &(D^*(W)-D^{\rm int}(G(u^{\rm int})))\nabla u^{\rm int} \cdot\nabla (u-u^{\rm int}) \di{x}\nonumber \\
        &\leq \|\nabla u^{\rm int}\|_{L^\infty(\Omega)} \int_\Omega\left( |D^*(W)-\overline{D}(G(u^{\rm int}))|+|\overline{D}(G(u^{\rm int}))-D^{\rm int}(G(u^{\rm int}))|\right)|\nabla (u-u^{\rm int})| \di{x}\nonumber\\
        &\leq C \int_\Omega| u-u^{\rm int}||\nabla (u-u^{\rm int})| \di{x}+  \delta C \int_\Omega|\nabla (u-u^{\rm int})| \di{x}.\label{eq:dstar estimatte}
    \end{align}
    Now, with the help of Young's inequality and \eqref{eq:dstar estimatte}, from \eqref{eq:add and subtract dstar}, we obtain
    \begin{equation}\label{eq:1}
        \frac{\mathrm d}{\mathrm dt}\|u-u^{\rm int}\|_{L^2(\Omega)}^2 +{\vartheta}\|\nabla(u-u^{\rm int})\|_{L^2(\Omega)}^2\leq C\|u-u^{\rm int}\|_{L^2(\Omega)}^2+C\delta^2.
    \end{equation}
   So, we have
    \begin{equation}\label{eq:gronwall estimate}
        \frac{\mathrm d}{\mathrm dt}\|u-u^{\rm int}\|_{L^2(\Omega)}^2 \leq C\|u-u^{\rm int}\|_{L^2(\Omega)}^2+C\delta^2.
    \end{equation}
    As an application of Grönwall's inequality on \eqref{eq:gronwall estimate}, we obtain the wanted estimate \eqref{eq:interpolation l2 error}.
    Now, using \eqref{eq:interpolation l2 error} on \eqref{eq:1}, we get \eqref{eq:interpolation h1 error}. 
\end{proof}

\ME{
\begin{remark}\label{rem:InterpOC}
\begin{itemize}
    \item \Cref{Thrm:InterpErrorEstimate} remains valid if the piecewise-linear interpolant \(D^{\rm int}\) is replaced by a piecewise-constant approximation \(D_{\rm pc}^{\rm int}\). Indeed, if \(\overline D\) is Lipschitz continuous and \(\delta\) denotes the maximal spacing of the parameter grid, then
    \[
    \bigl|\overline D(p)-D_{\rm pc}^{\rm int}(p)\bigr|
    \leq
    \delta\,\operatorname{Lip}(\overline D).
    \]
    \item The error rate improves for piecewise-linear interpolation if the parameter-to-tensor map is more regular.
    More precisely, if \(\overline D\in C^{1,1}([-L,L])^{2\times2}\), then 
    \[
    \|\overline D-D^{\rm int}\|_{L^\infty([-L,L])} \leq C\delta^2.
    \] 
    Since stability estimate used in the proof of \Cref{Thrm:InterpErrorEstimate} depends linearly on the coefficient error, this leads to the improved estimate
    \[
    \|u-u^{\rm int}\|_{L^2(S;H^1(\Omega))} \leq C\delta^2.
    \]
    In our case, such a result would require corresponding second-order regularity of the map \(p\mapsto \overline D(p)\).
    We refer to \cite{eden2026twoscale} where a similar result has been shown for a related problem.
    We point out that this improved convergence rate can also be seen in our simulations, see~\cref{tab:precompute_delta}.
    \item 
    More generally, if \(\overline D\in C^{m,1}([-L,L])^{2\times2}\) and \(D_m^{\rm int}\) is a piecewise-polynomial interpolant of degree \(m\), then one has
    \[
        \|\overline D-D_m^{\rm int}\|_{L^\infty([-L,L])}
        \leq C\delta^{m+1}.
    \]
    Since the macroscopic solution error is bounded linearly in terms of the \(L^\infty\)-error of the effective coefficient, this order is inherited by the macroscopic approximation.
\end{itemize}  
\end{remark}
}

 \section{Numerical experiments}\label{numerical_experiments}
 
 In this section, we present simulation results for a test problem, with  
all computations carried out using FEniCS \cite{logg2012automated}. The computations were performed on a standard laptop, employing sparse LU decomposition to solve the linear systems of equations. 
In what follows, we establish various sets of model ingredients and microstructures for the numerical experiments.
Our selection mainly aims to evaluate the capacity of the numerical method while also ensuring interesting simulations.
Although some choices lack physical or mathematical motivation, we highlight any exceptions. We begin by presenting the simulation results for scheme 1 and scheme 2, both with and without precomputing. Next, we compare the computing times for each scenario and perform error analysis to determine the order of convergence \SN{(OC)}. Finally, we apply this model and simulation technique to capture the penetration of liquid into materials.

\subsection{Simulation results with and without precomputing} \label{microsolution}
 We begin by first defining the microscopic domain  
 \begin{equation}
 \label{micro_domain1} Y:= (0,1)^2 \setminus  (\mathcal{E}_{0.1, 0.2}((0.85, 0.75)) \cup \mathcal{E}_{0.3, 0.08}((0.35, 0.1)) \cup \mathcal{E}_{0.15, 0.15}((0.175, 0.8))),\end{equation} 
 where  $\mathcal{E}_{r_1, r_2}((y_1, y_2))$ denotes the closed ellipse with center $(y_1, y_2)$,  semi major axis $r_1$ and semi minor axis $r_2$. 
 Note that in the case $r_1 = r_2$, $\mathcal{E}_{r_1, r_2}((y_1, y_2))$ represents a closed disk of radius $r_1$.
To ensure the microscopic drift $B(y)$ is interesting, physically relevant, and satisfies assumption \ref{A3}, we solve the Stokes problem in each geometry with  the choice of  viscosity  $\mu=0.01$ and the force 
\[F:Y\to\mathbb{R}^2 \mbox{ given by }F(y) = (10\sin(2 \pi y_1) \sin(2\pi y_2), 10\sin(2 \pi y_1) \cos(2\pi y_2)),\]
for $y=(y_1,y_2)\in Y$. 
 We refer the reader to the \cref{appendix} for more details on the weak formulation and solving techniques of the Stokes problem.
 The velocity vector $B(y) = (B_{1}(y), B_{2}(y))$, i.e., the solution to the Stokes problem, is shown in Figure \ref{Fig:Stoke}.

\begin{figure}[ht]
		\centering
		\includegraphics[width=0.50\textwidth]{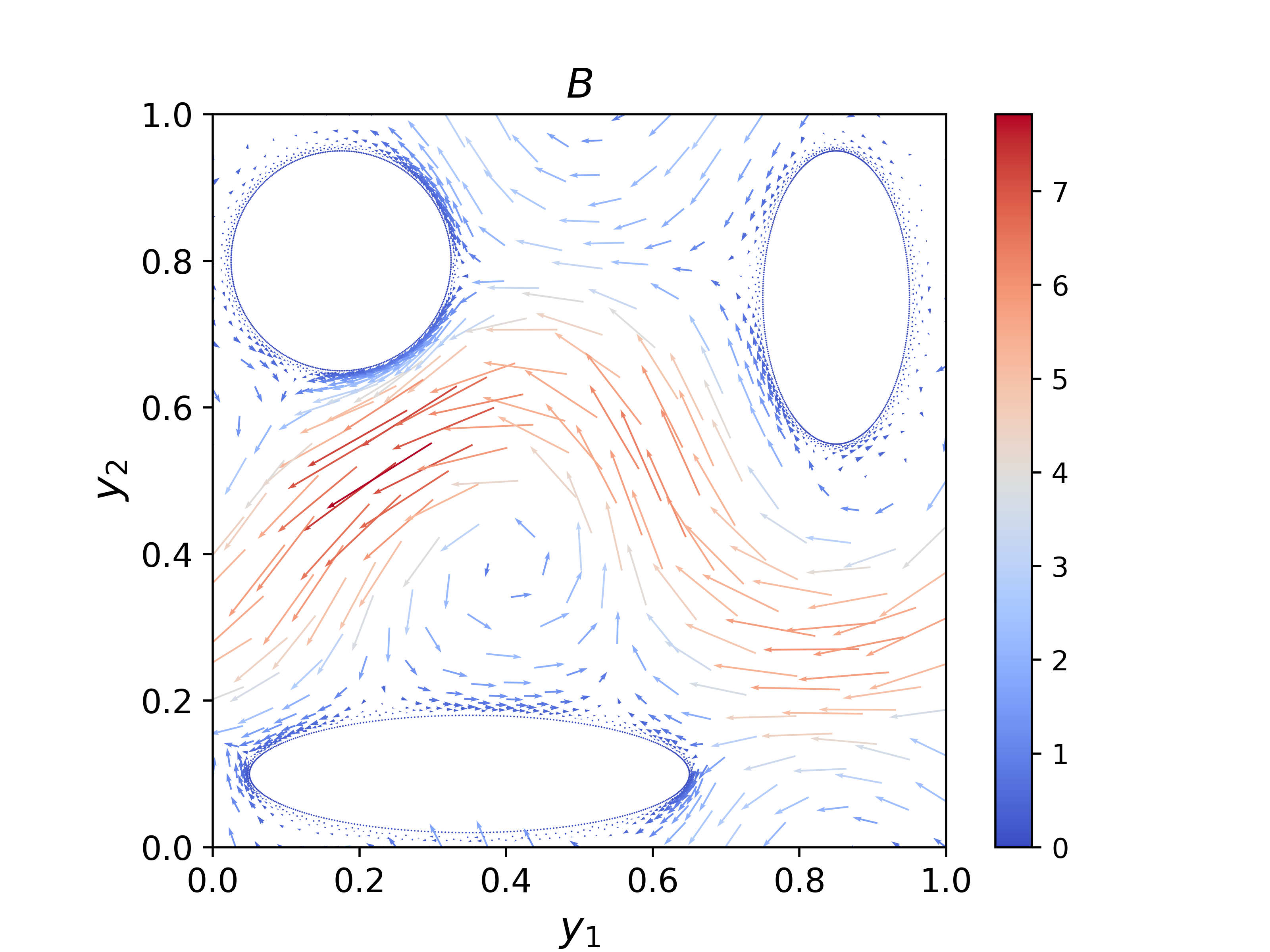}
		\caption{\SN{Velocity field $B(y)$ to the Stokes problem over domain \eqref{micro_domain1}.}}
		\label{Fig:Stoke}
	\end{figure}

 %\begin{figure}[ht]
%		\centering
%		\includegraphics[width=0.45\textwidth]{Pictures/B1.png}
%		\hspace{0.01cm}
%\includegraphics[width=0.45\textwidth]{Pictures/B2.png}
%		\caption{Solution to the Stokes problem, $B(y) = (B_1(y),B_2(y))$, over domain \eqref{micro_domain1}. Left: $B_1(y)$. Right: $B_2(y)$.}
%		\label{Fig:Stoke}
%	\end{figure}
 \par While implementing the algorithm that solves the microscopic problem under the zero-average condition,  we employ the Lagrange multiplier method for both schemes as in, for instance,  \cite{formaggia2002numerical}. 
 \SN{To enforce the periodic boundary conditions, the microscopic mesh is chosen to be compatible with the periodic structure of the unit cell $Y$, so that the mesh nodes on opposite boundaries match.
 In FEniCS, the periodicity is imposed through a boundary mapping that identifies the corresponding degrees of freedom on opposite boundaries.
 Consequently, the discrete functions in $W_h$ satisfy the prescribed periodic boundary conditions; see, for example, \cite[p.~413]{logg2012automated} for further details.}
The weak formulation for the microscopic equation \eqref{scheme1_micro_fully_def} with scheme $1$ is then equivalent to 
	solving for $(w_{i, n-1}^{k, h} (\tilde{x}, y), c)\in \mathbb{W}_h \times \mathbb{R} $ such that the following  identity holds:
\begin{align}
  \nonumber &\int_Y\left( D(y)\nabla_y w_{i, n-1}^{k, h} (\tilde{x}, y) - G(u^{k, H}_{n-1}(\tilde{x})) B(y) w_{i, n-1}^{k, h} (\tilde{x}, y)\right)  \cdot \nabla_y v(y) \di{y} \\
 \label{microweak_with_lagrange}  & \hspace{2cm} +  \int_{\Gamma_N} D(y)e_i\cdot n_y v \di{\sigma} + \int_Y d w_{i,n-1}^{k,h}(y) \di{y} + \int_Y c v \di{y}    =  \int_Y  \nabla_y D(y) \cdot  e_i v \di{y},  
 \end{align}
 for all $ (v, d) \in \mathbb{W}_h \times \mathbb{R}, \;\;i \in \{1, 2\}$.

Similarly, using the method of Lagrange multipliers, the weak formulation for the microscopic equation \eqref{scheme2_micro_fully_def} with scheme $2$ is  equivalent to 
	finding $(w_{i, n-1}^{h} (\tilde{x}, y), c)\in \mathbb{W}_h \times \mathbb{R} $ such that the following  identity holds:
\begin{align}
  \nonumber &\int_Y\left( D(y)\nabla_y w_{i, n-1}^{h} (\tilde{x}, y) - G(u^{H}_{n-1}(\tilde{x})) B(y) w_{i, n-1}^{h} (\tilde{x}, y)\right)  \cdot \nabla_y v(y) \di{y} \\
 \label{microweak_with_lagrange2}  & \hspace{2cm} +  \int_{\Gamma_N} D(y)e_i\cdot n_y v \di{\sigma} + \int_Y d w_{i,n-1}^{h}(y) \di{y} + \int_Y c v \di{y}    =  \int_Y  \nabla_y D(y) \cdot  e_i v \di{y},  
 \end{align}
 for all $ (v, d) \in \mathbb{W}_h \times \mathbb{R}, \;\;i \in \{1, 2\}$.
 %An analogous weak formulation corresponding to scheme $2$ and equation \eqref{scheme2_micro_fully_def} can be similarly written, but we omit this for the brevity of the manuscript. 

   For the numerical experiments here, we choose $G(u):= 1-2u$.     
   This choice for $G(\cdot)$ is taken from the upscaled model given in \cite{raveendran2023homogenization}. \SN{For nonlinear choices of the interaction function \(G(\cdot)\), additional numerical simulation results obtained using scheme 2, with and without precomputing, are presented in Appendix \ref{appendix2}. The corresponding results obtained using scheme 1 can be found in \cite{raveendran2023strongly}.} Note that in \cite{raveendran2023homogenization} the authors study the homogenization for the large drift model of the form 
   \begin{equation*}
          \partial_t u^{\varepsilon} +\mathrm{div}(-D^{\varepsilon}\nabla u^{\varepsilon}+ \frac{1}{\es}B^{\varepsilon}u^\es(1-u^{\varepsilon}))=f^{\varepsilon}
   \end{equation*}
   in which the upscaled model has similar structure of Problem $(P)$ with $G(u)=1-2u$.
   The diffusion matrix is  chosen as 
 \begin{align*}
 D(y) :=
 \begin{bmatrix}
2 + \sin(\pi y_1)\sin(\pi y_2) & 0\\
0 & 2+ \sin(\pi y_1)
\end{bmatrix},\quad y=(y_1,y_2)\in Y.
 \end{align*}
For the precomputing step, we solve the weak formulation of the auxiliary cell problem \eqref{auxiliary1}--\eqref{auxiliary3} obtained by replacing $G(\cdot)$ with parameter, $p$, in \eqref{microweak_with_lagrange} and \eqref{microweak_with_lagrange2}. We solve for a total of $201$ different values of $p$ ranging from $-10^{11}$ to $10^{11}$, including $101$ points between $-10$ to $10$. 
The motivation to choose more points near $0$ came from the fact that a dynamic behavior of the dispersion tensor was observed in the vicinity of $0$ in our recent work \cite{raveendran2023strongly}. It is worth noting that for a given velocity field $B$, the parameter $p$  effectively acts as a local Peclet-type number since, depending on its size, it either weakens or strengthens the macroscopic drift effect. Our numerical results in Figure \ref{Fig:Diffusion_tensor} show that the dispersion effect is enhanced near zero for the given velocity field.  This observation closely aligns with the discussion in Section 5 of \cite{ALLAIRE20102292}.

\begin{figure}[ht]
		\centering
\includegraphics[width=0.45\textwidth]{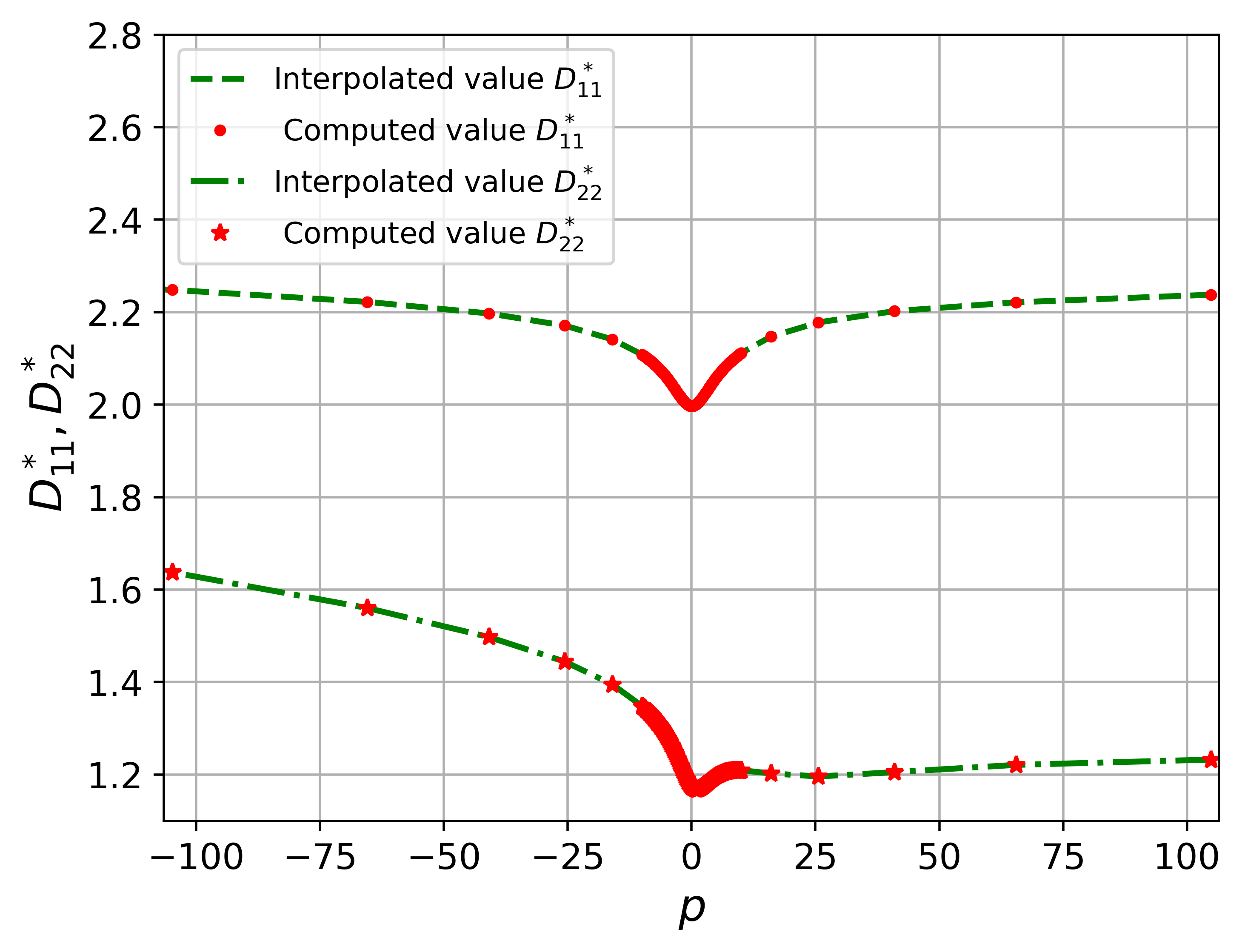}
		\hspace{0.01cm}
\includegraphics[width=0.47\textwidth]{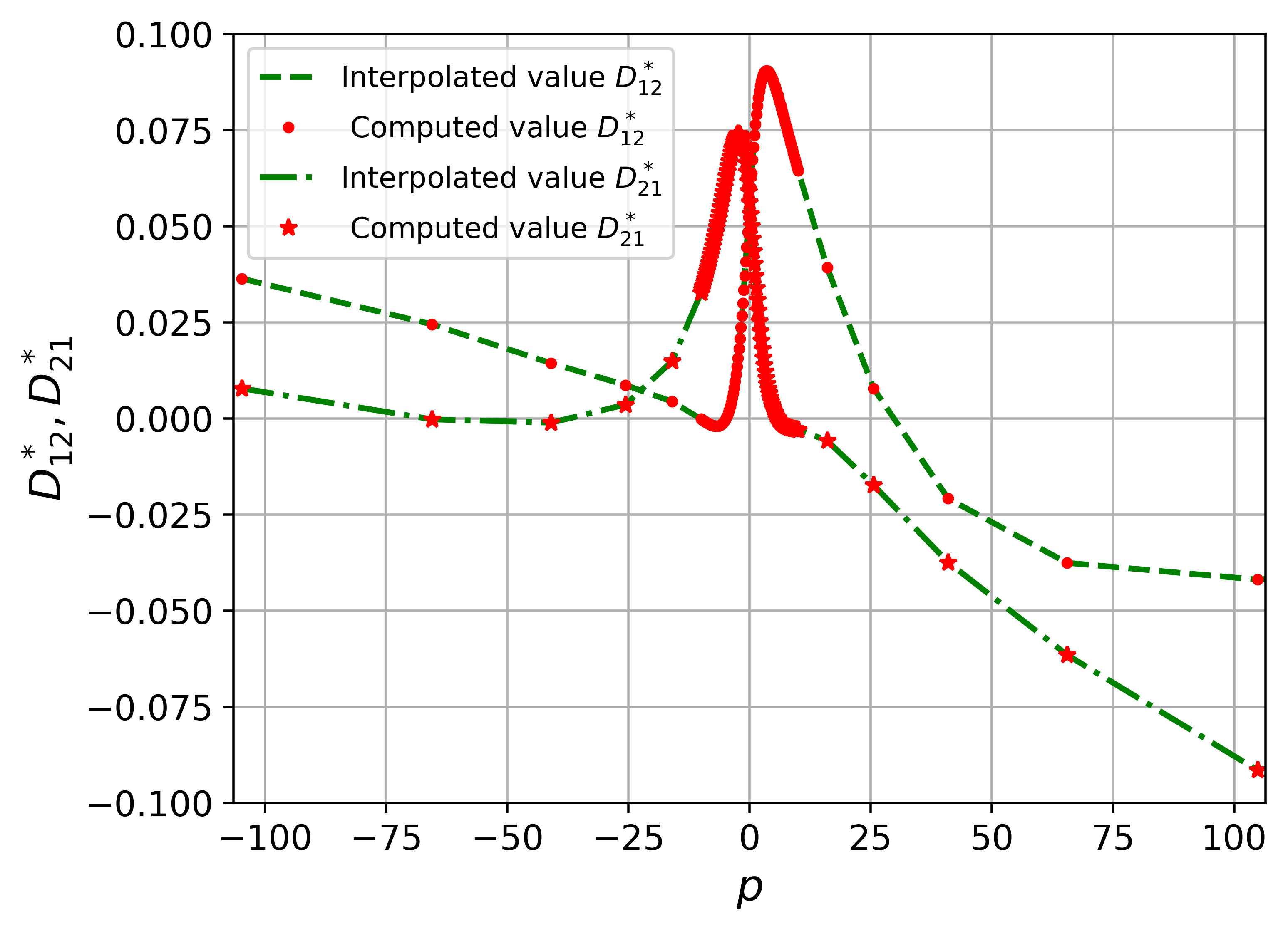}
		\caption{Computed values for the entries of the dispersion tensor 
 $D^*$ for different values of $p$ and its interpolated values: The main-diagonal entries (left) and off-diagonal entries  (right).}
		\label{Fig:Diffusion_tensor}
	\end{figure}
In Figure \ref{Fig:Diffusion_tensor}, we show the result for the computed values of each entry of the dispersion tensor  $D^*$ and its linearly interpolated values, named  $D^{\rm int}$, which we will use later to solve the macroscopic equations. 
  
%\subsection{Macroscopic solution} \label{macrosolution}
Once the dispersion tensor $D^*$ or the precomputed dispersion tensor $D^{\rm int}$ is computed, we proceed to solve the macroscopic problem \eqref{homeq1}-\eqref{macro_initial_con}. Although rectangular domains do not meet the boundary regularity assumptions mentioned in Section \ref{assumption}, we set the macroscopic domain as $\Omega = (0, 1)\times (0,2)$ for the convenience of the simulation work. We choose the initial profile to be 
\begin{align*}u_0(x_1, x_2) :=  \begin{cases} \exp{(-10((x_1-0.5)^2 + (x_2-0.5)^2))}, \;\;\text{if} \;\; (x_1, x_2) \in  \mathcal{E}_{0.25, 0.25}((0.5, 0.5)),\\
0, \;\;\text{otherwise},
\end{cases} \end{align*} 
and the source term to be
 \begin{align*} f(x_1, x_2) := \begin{cases} 1000, \;\;\; \text{if} \;\;\; (x_1, x_2)\in \mathcal{E}_{0.25, 0.25}((0.5, 0.5)),\\
0,  \;\;\;\text{otherwise}.
\end{cases}\end{align*}

The initial guess for the iteration scheme, i.e., scheme $1$,  is chosen the same as the initial condition i.e.,  $u^0(t, x) = u_0(x)$. 
We continue the iteration until a maximum number of iterations is achieved or the error $e^k:=||u^{k+1, H}-u^{k, H}||_{L^2(S;  L^2(\Omega))}$ falls below a tolerance, $\epsilon$.
In our simulation, we choose $\epsilon=10^{-7}$. As we are interested in investigating the macroscopic behaviour of the solution, we define the iteration error and the tolerance based on the macroscopic solution. However, it is possible to define these error estimators based on the microscopic solution.

%While the precomputing strategy should lead to a significant improvement in terms of computation time, it is crucial to know that the additional error accumulated due to the interpolation is not detrimental to the simulation.
While the precomputing strategy is expected to significantly improve the computational efficiency, it is essential to ensure that the additional error introduced by interpolation does not adversely affect the accuracy of the simulation.
This was investigated analytically in Theorem \ref{Thrm:InterpErrorEstimate}, but we now study this result numerically. 
To proceed with our investigation,  we solve the macroscopic problem with scheme $1$ using both $D^*$ and $D^{\rm int}$. 
We set the maximum number of iterations to $10$. However, the error $e^k$ dips below the tolerance value $\epsilon$ in the $7$th iteration.   
The values of the errors $e^k$ and $e^{k+1}/e^{k}$ are listed in Table \ref{tab:error_iter_scheme}. 

\begin{table}[h]
\centering
\begin{tabular}{c | cc | cc}
\hline
\multirow{2}{*}{k}  & \multicolumn{2}{c}{Scheme $1$}    & \multicolumn{2}{c}{\begin{tabular}[c]{@{}c@{}}Scheme $1$\\ (Precomputing)\end{tabular}} \\ \cline{2-5} 
                   & $e^k$  & $e^{k+1}/e^{k}$ & $e^{k}$  & $e^{k+1}/e^{k}$   \\ \hline
                 0  &  15.716763     &  0.087180      &    15.716707     & 0.086704     \\ 
                 1  &  1.370196     &  0.021651        &   1.362716      &  0.021800    \\ 
                 2  &  0.029666     &   0.015369       &    0.029708      &   0.015469   \\ 
                 3  &  0.000455     &   0.022186       &     0.000459    &   0.021562   \\ 
                 4  & 1.011 e${-05}$ &   0.033969      &  9.909 e${-06}$  & 0.033860    \\ 
                 5  & 3.436 e${-07}$  &   0.031789       &  3.355 e${-07}$  &  0.031190    \\ 
                 6  &  1.092 e${-08}$     &          &     1.047 e${-08}$    &      \\ \hline
\end{tabular}
\caption{Computation values of $e^k$  with $D^*$ and precomputed $D^{\rm int}$. We choose $4096$ DOFs for the macroscopic domain. \SN{We set the solver tolerance $\epsilon$ equal to $10^{-7}$ throughout the simulations.}}
    \label{tab:error_iter_scheme}
\end{table}

 A noticeable trend is observed in Table \ref{tab:error_iter_scheme}  where the error $e^k$ monotonically decreases as the number of iterations increases and the ratio  $e^{k+1}/e^{k}$ is less than $1$. As we are only reporting the sequential errors in the macroscopic solution at the different iterates, the errors in Table \ref{tab:error_iter_scheme} are almost identical and only differ slightly due to the additional error introduced from precomputing.
 
The precomputed dispersion tensor $D^{\rm int}$ is now utilized in scheme $2$ to solve the macroscopic problem. 
 We begin with comparing the temporal evolution of $D^*(t, \tilde{x})$ and $D^{\rm int}(t, \tilde{x})$  at a fixed, arbitrarily chosen, spatial point  $\tilde{x} = (0.5714,  0.5714) \in \Omega$. 
The outcomes for all four entries of $D^*$ and $D^{\rm int}$ are shown in Figure \ref{Fig:Comp_Diffusion_tensor}. 
%We omit to show outcomes for the remaining two entries as the behaviors closely resemble these cases. 
\begin{figure}[ht]
		\centering
\includegraphics[width=0.44\textwidth]{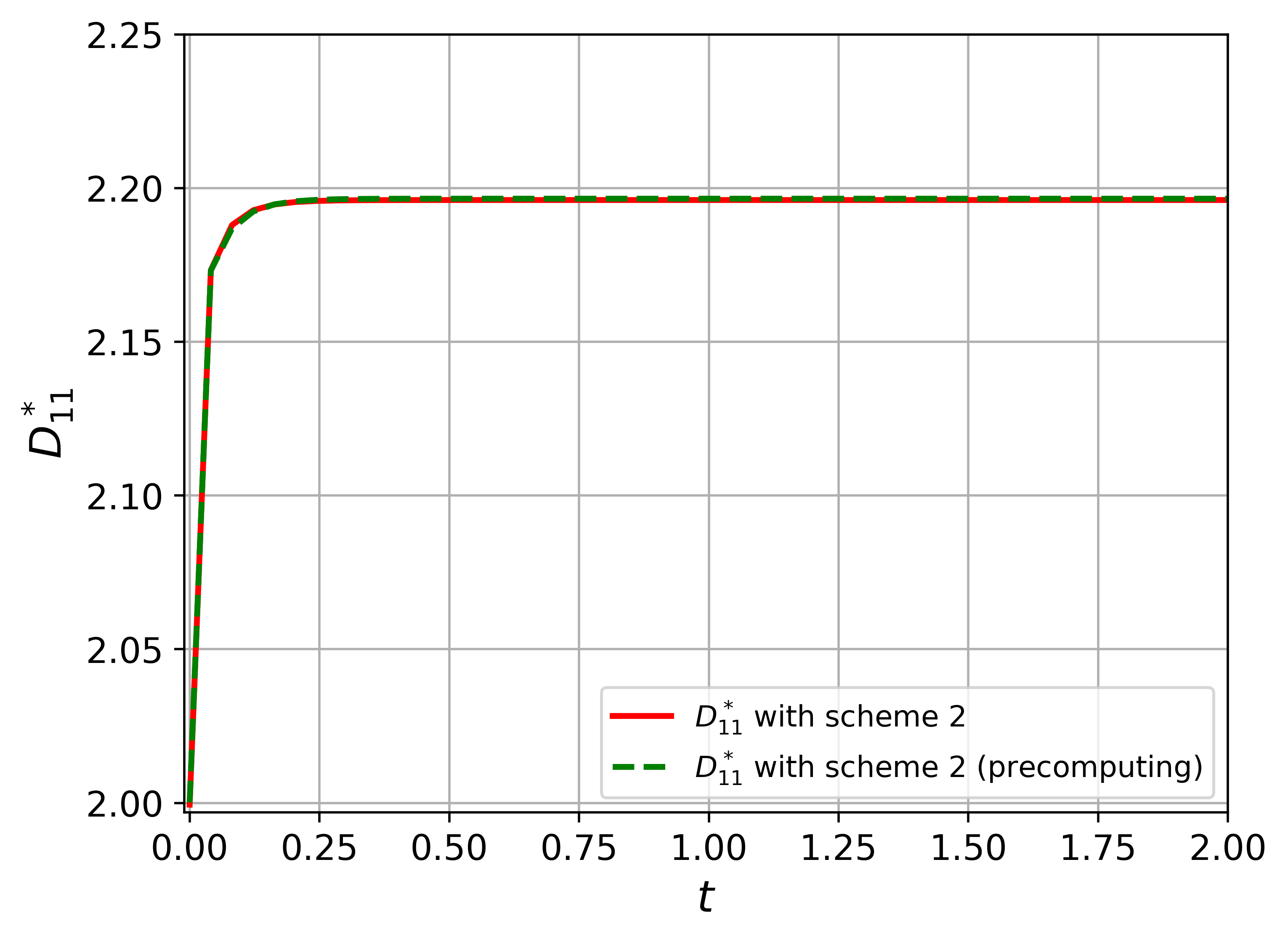}
		\hspace{0.01cm}
\includegraphics[width=0.44\textwidth]{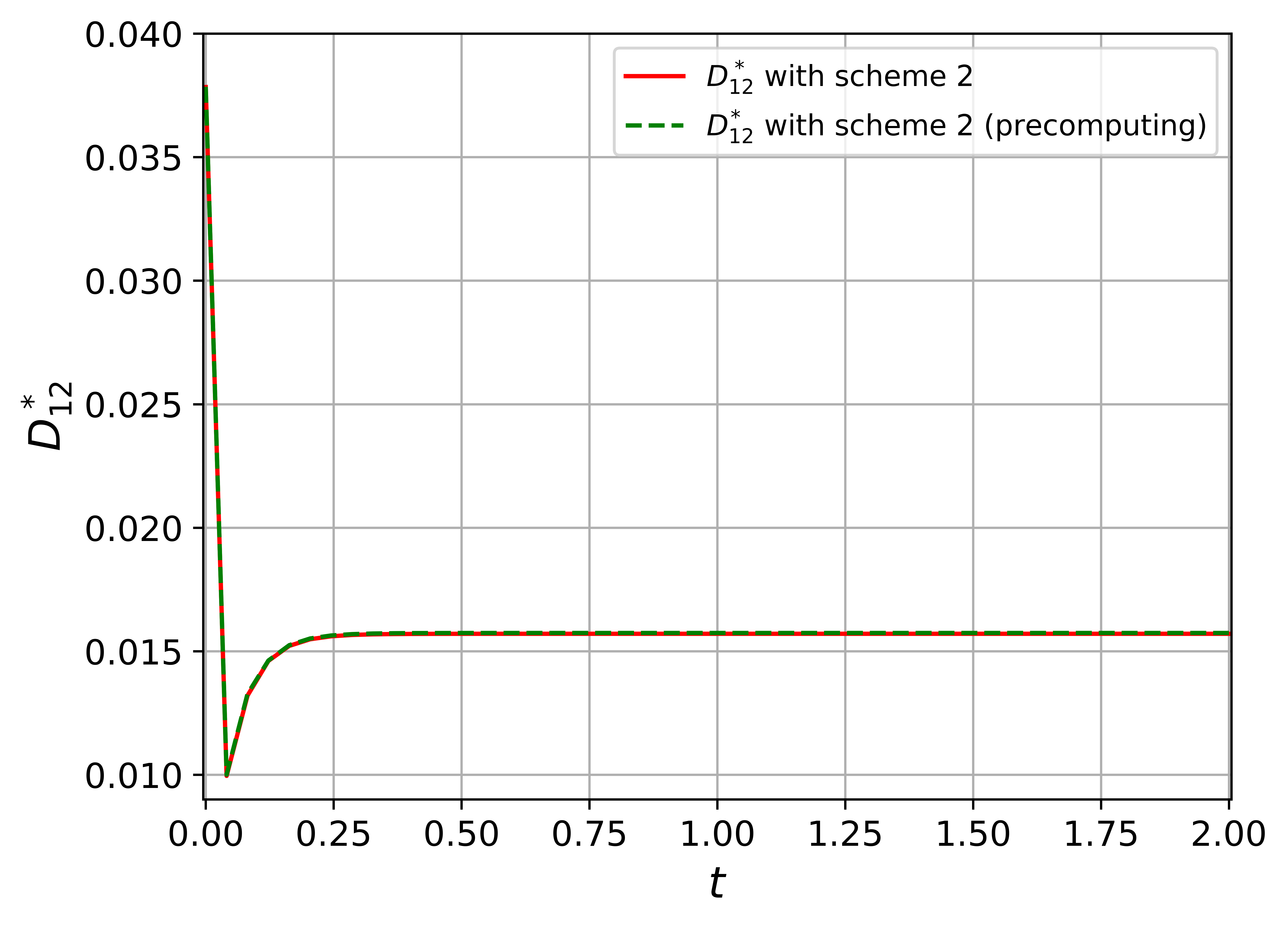}\\
\includegraphics[width=0.44\textwidth]{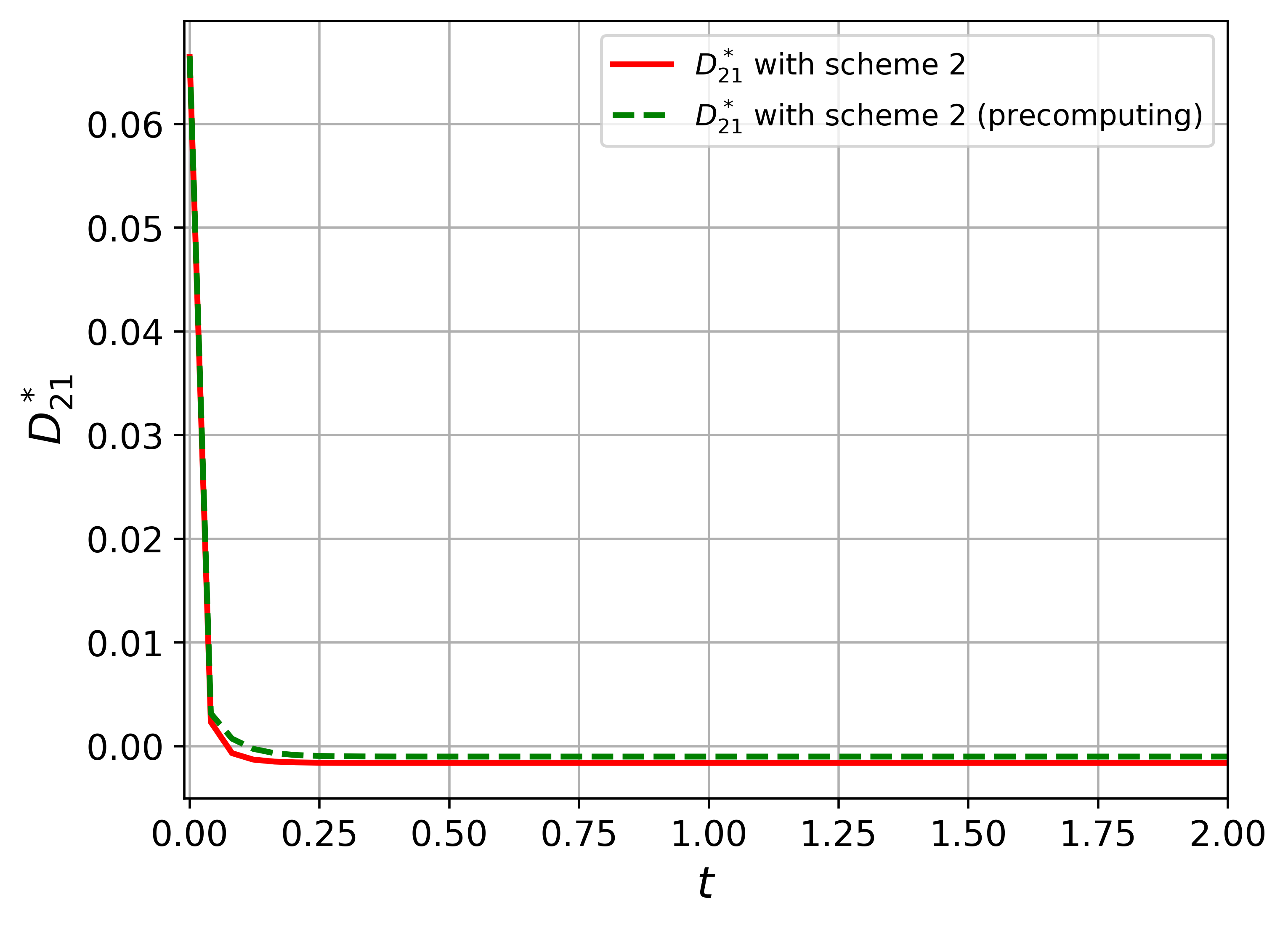}
		\hspace{0.01cm}
\includegraphics[width=0.44\textwidth]{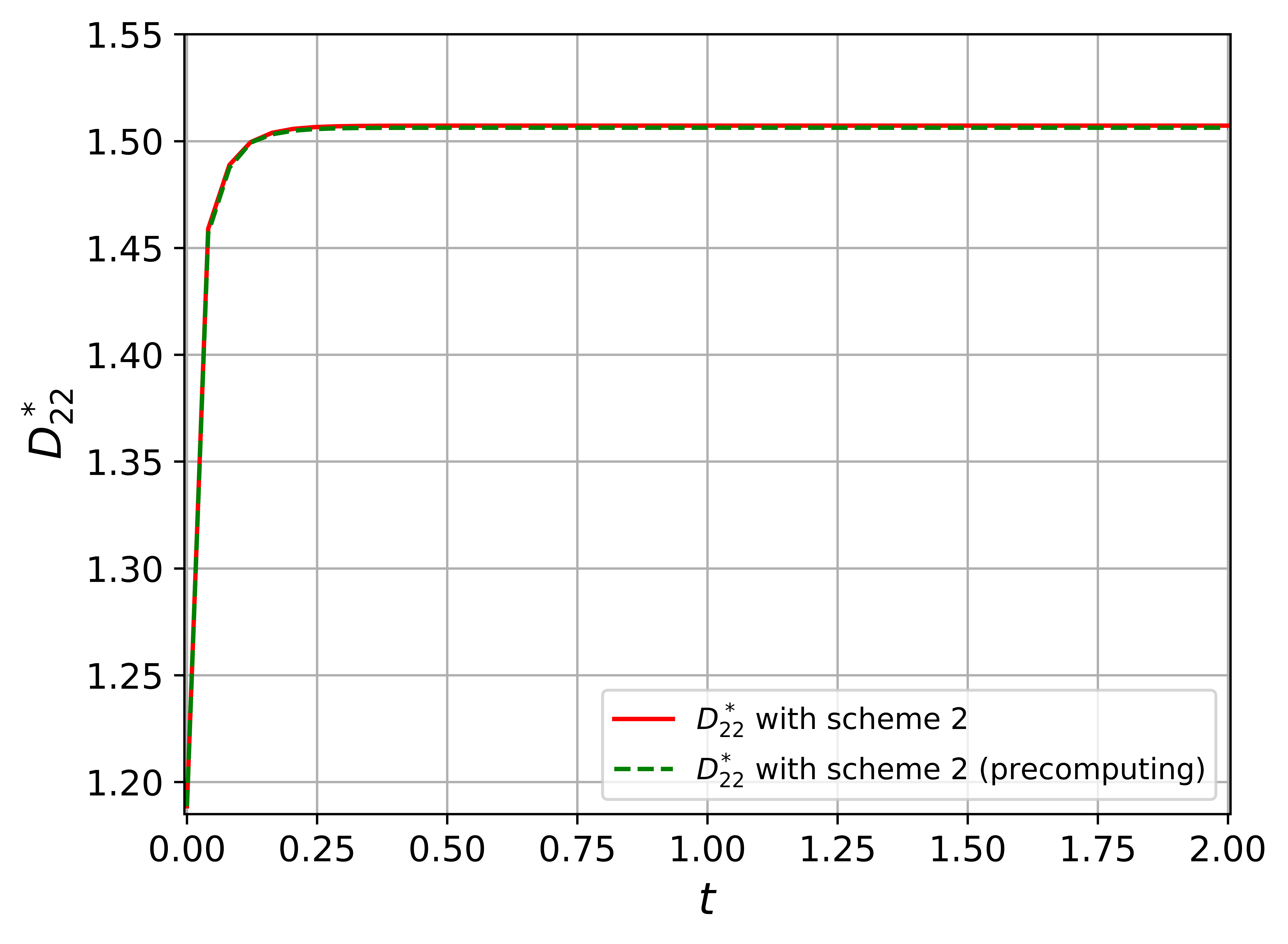}
\caption{Comparison of the time evolution of the dispersion tensor 
 $D^*$  and $D^{\rm int}$ at the point $(0.5714,  0.5714)$.}
		%\caption{ Time evolution of the dispersion tensor $D^*$  and $D^{\rm int}$ at the point $(0.5714,  0.5714)$: diagonal entries (left) and off-diagonal entries  (right).}
\label{Fig:Comp_Diffusion_tensor}
	\end{figure}
We can see in Figure \ref{Fig:Comp_Diffusion_tensor} that all entries of $D^{\rm int}$ are in good agreement with the entries of $D^*$, suggesting that the accumulated error due to the interpolation is manageable.
  
To compare the macroscopic solution with and without precomputing,  we point out the plots for the macroscopic solution of scheme $2$ with  $D^*$ and $D^{\rm int}$ (with precomputing) at $T=2$ in Figure \ref{macrosolution_scheme2} where we also plot the pointwise difference between the two solutions, see the right-most plot. This simulation is done using 2016 DOF in the macroscopic domain. 

\begin{figure}[ht]
		\centering	
    \includegraphics[width=0.35\textwidth]{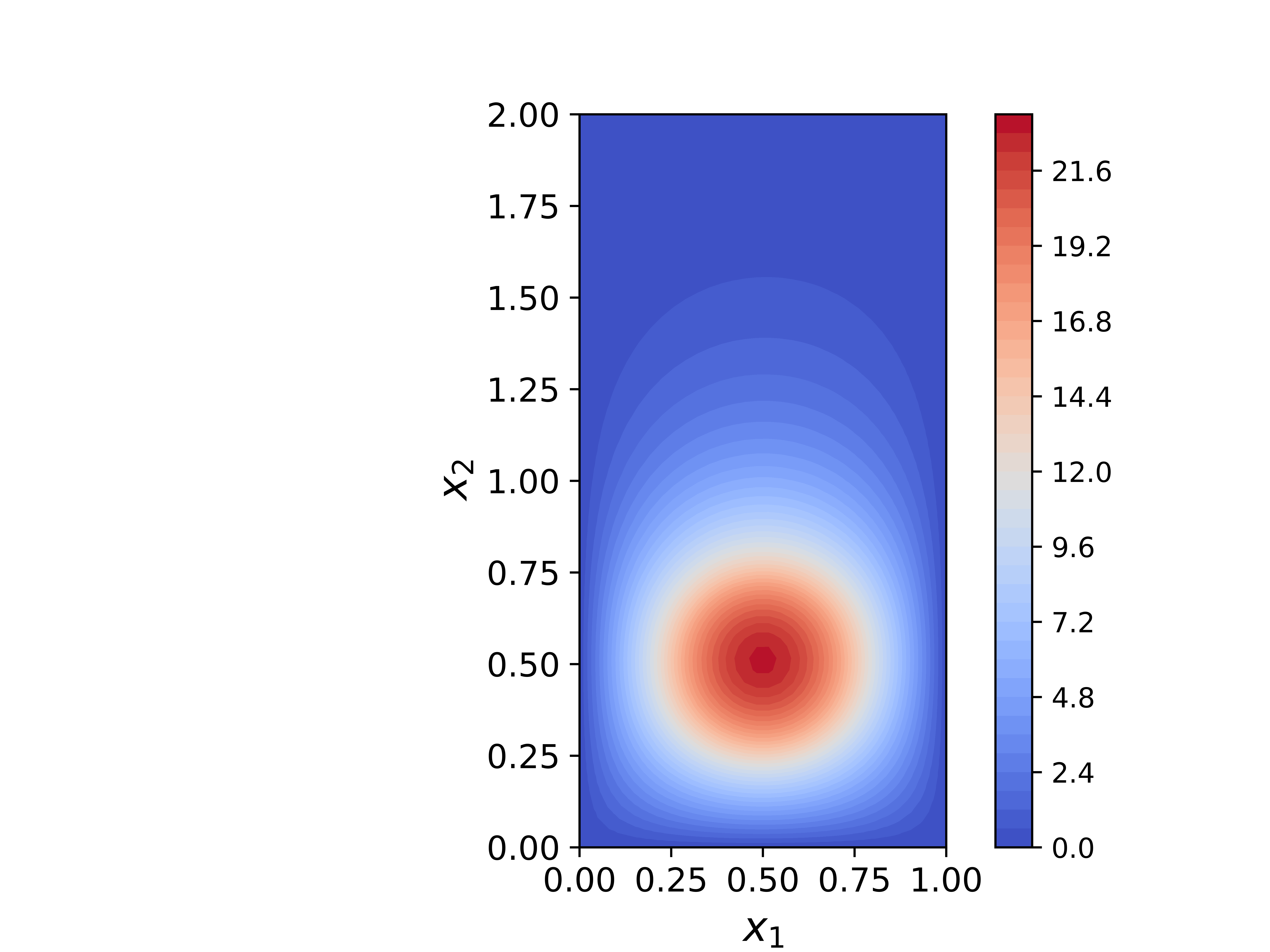}
		\hspace{-0.9cm}
       \includegraphics[width=0.35\textwidth]{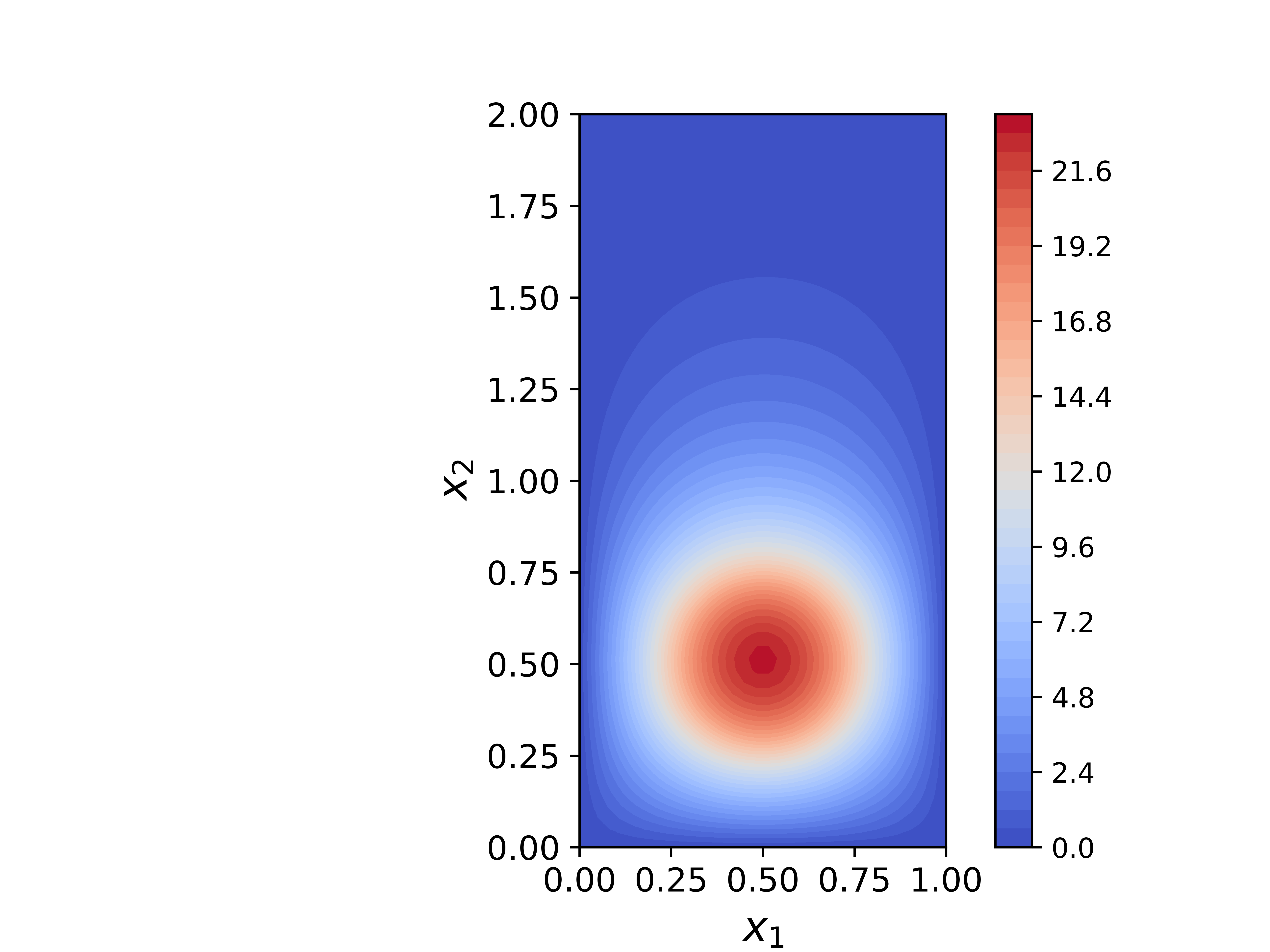}
        \hspace{-0.9cm}
        \includegraphics[width=0.35\textwidth]{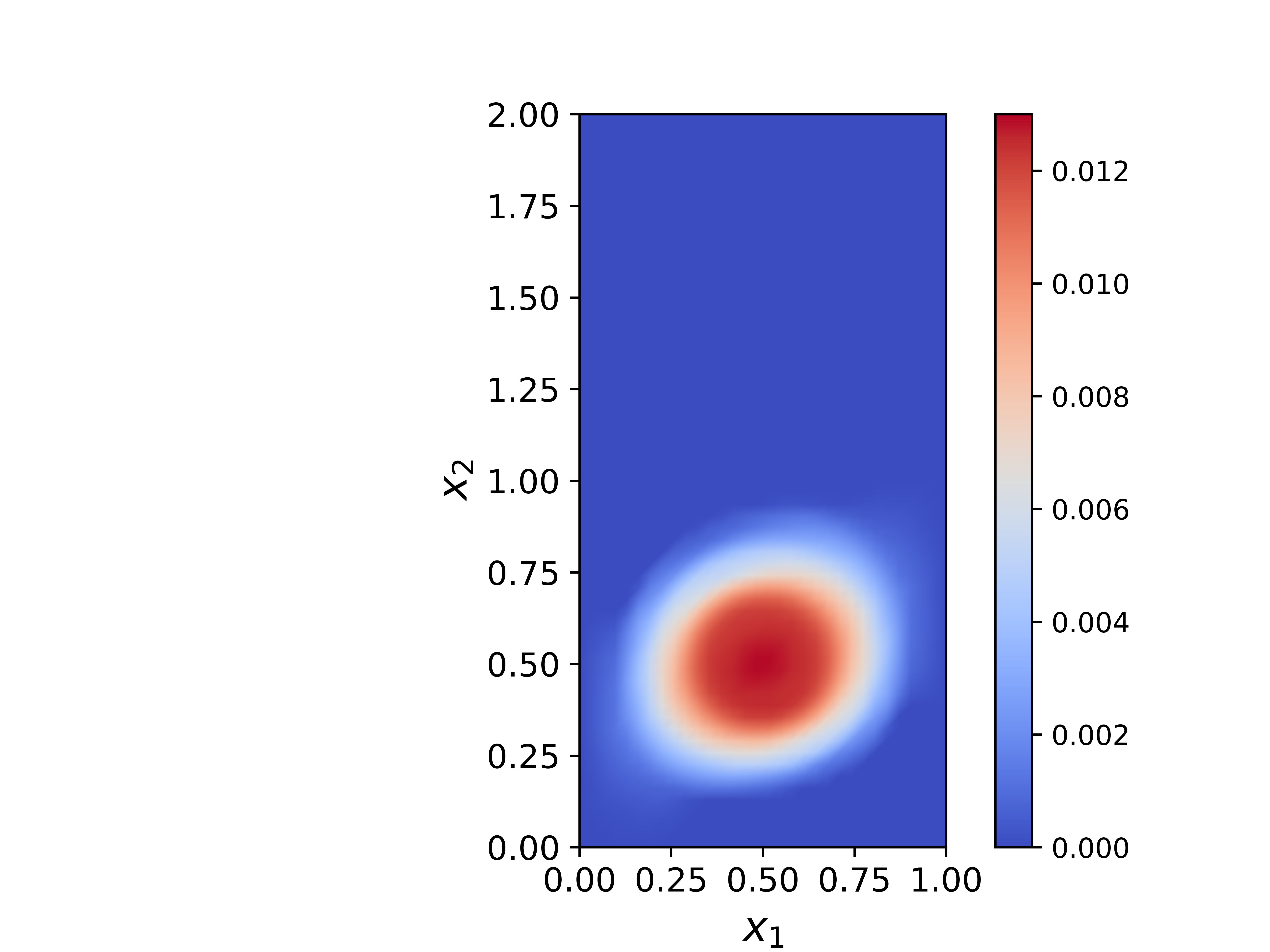}
		\caption{Concentration profile approximated via scheme $2$ (left) and via scheme $2$ with precomputing (middle). The pointwise difference between these two approximations at $T = 2$ and $M = 50$ (right).}
		\label{macrosolution_scheme2}
\end{figure}

\subsection{Computing time and error analysis} \label{erroranalysis}
In this section,  we compare the computing time and the experimental convergence errors of both schemes, with and without precomputing. To begin with,
we calculate the errors and computing time of both schemes for different choices of macroscopic mesh sizes and compare them in Table \ref{tab:errorandtime}. Comparing these two numerical schemes, the computing time for scheme $2$ is less than that of scheme $1$. 
This is to be expected as scheme $2$ does not have the additional overhead from the Picard iteration steps.  
Also, perhaps as expected, precomputing helps to save computing time for both schemes. 
The precomputing process takes approximately $46$ seconds to solve the $201$ auxiliary problem with $813$ DOFs in the microscopic domain. 
Excluding the precomputation time costs, we see in Table \ref{tab:errorandtime} that for each choice of the macroscopic meshes,  the total cost to compute the macroscopic solution, using the precomputed dispersion tensor, is at least $100$ times smaller than without precomputing. 

\begin{table}[ht]
\centering
\begin{tabular}{lllll}
\hline
\multirow{2}{*}{Macro DOFs}                     & \multicolumn{2}{c}{Scheme $1$}                                            & \multicolumn{2}{c}{Scheme $1$ (precomputing)}         \\ \cline{2-5} 
                                         & Errors                     & Computing time (s)                     & Errors                     & Computing time (s) \\ \hline
16                                       &     4.8023658                        &   396.91                                         &          4.804463                  &     2.25                   \\
64                                       &    1.6308094                         &   1781.25                                         &            1.632296                &    4.42                    \\
256                                      &   0.4155008                         &   7059.71                                         &             0.416212              &     11.70                   \\
1024                                     &   0.1678484                         &  28417.18                                          &               0.1671295             &    53.35                    \\
4096                       &                            &   113488.39                                         &                            &                  189.84      \\ \cline{2-5} 
\multicolumn{1}{c}{\multirow{2}{*}{Macro DOFs}} & \multicolumn{2}{c}{Scheme $2$}                                            & \multicolumn{2}{c}{Scheme $2$ (precomputing)}         \\ \cline{2-5} 
\multicolumn{1}{c}{}                     & \multicolumn{1}{c}{Errors} & \multicolumn{1}{c}{Computing time (s)} & \multicolumn{1}{c}{Errors} & Computing time (s) \\ \cline{2-5} 
16                                       &    4.8023659                        &      70.52                                      &                        4.804463    &       0.36                 \\
64                                       &   1.6308094                         &      278.65                                      &                        1.632296290    &         0.48               \\
256                                      &   0.4141075                        &    1112.10                                        &                       0.416212306     &       1.30                 \\
1024                                     &  0.1667797                          &     4345.78                                       &                       0.16712954     &       4.64                 \\
4096                                     &                            &         17013.41                  &                        &   20.92                    
\end{tabular}
\caption{\label{tab:errorandtime} Errors and  computing time of the schemes for $T=2$ with $M= 20$. The microscopic problem is solved with $816$ DOFs for both schemes.  The precomputed $D^{\rm int}$ is obtained by solving $201$ auxiliary problems and it takes approximately $46$s, and hence, for a fair estimate of total time, one should add precomputing time of 46s to both schemes with precomputing.}
\end{table}
%4.9950355702051645, 1.7971047846321166, 0.4508093050636143, 0.17809423274273636 (Scheme 2 preomputing with -100, 0, 100)
% Error_in_scheme2 precompouting= [4.896801426570944, 1.735900604867726, 0.433513811005493, 0.1724472757363362] with ( -100, -50 0, 50, 100.
The analytical solution to the problem is unavailable and, therefore, we compare two macroscopic solutions, one solved on a more refined mesh than the other, to test the accuracy and convergence of our numerical schemes.  
To compute the error, we first fix the microscopic mesh with  $813$ DOF to solve the Stokes problem and microscopic problem.  We then fix the time mesh by taking $M=20$ and solve the macroscopic problem on two different meshes, one with the four times DOFs as the other one, and then compare the results on $L^2(S; L^2(\Omega))$ to compute the resulting error. Indeed,  a refinement step (corresponding roughly to a macroscopic mesh size divided by two), starting with the initial macroscopic mesh with
$16$ DOF, leads to $64$ DOF and $256$ respectively for the next two refinements as mention in Table \ref{tab:errorandtime}. 
 We observe that the $L^2$ errors have the expected decay rate as seen in Table \ref{tab:errorandtime}. We show in Figure \ref{loglogplot_errors} the order of convergence \SN{(OC)} concerning macroscopic space mesh size.  
Because of the similar behavior of the error values with scheme $1$ and scheme $1$ (precomputing), we skip to display the error plots for them.

\begin{figure}[hbt!]
		\centering	
    \includegraphics[width=0.45\textwidth]{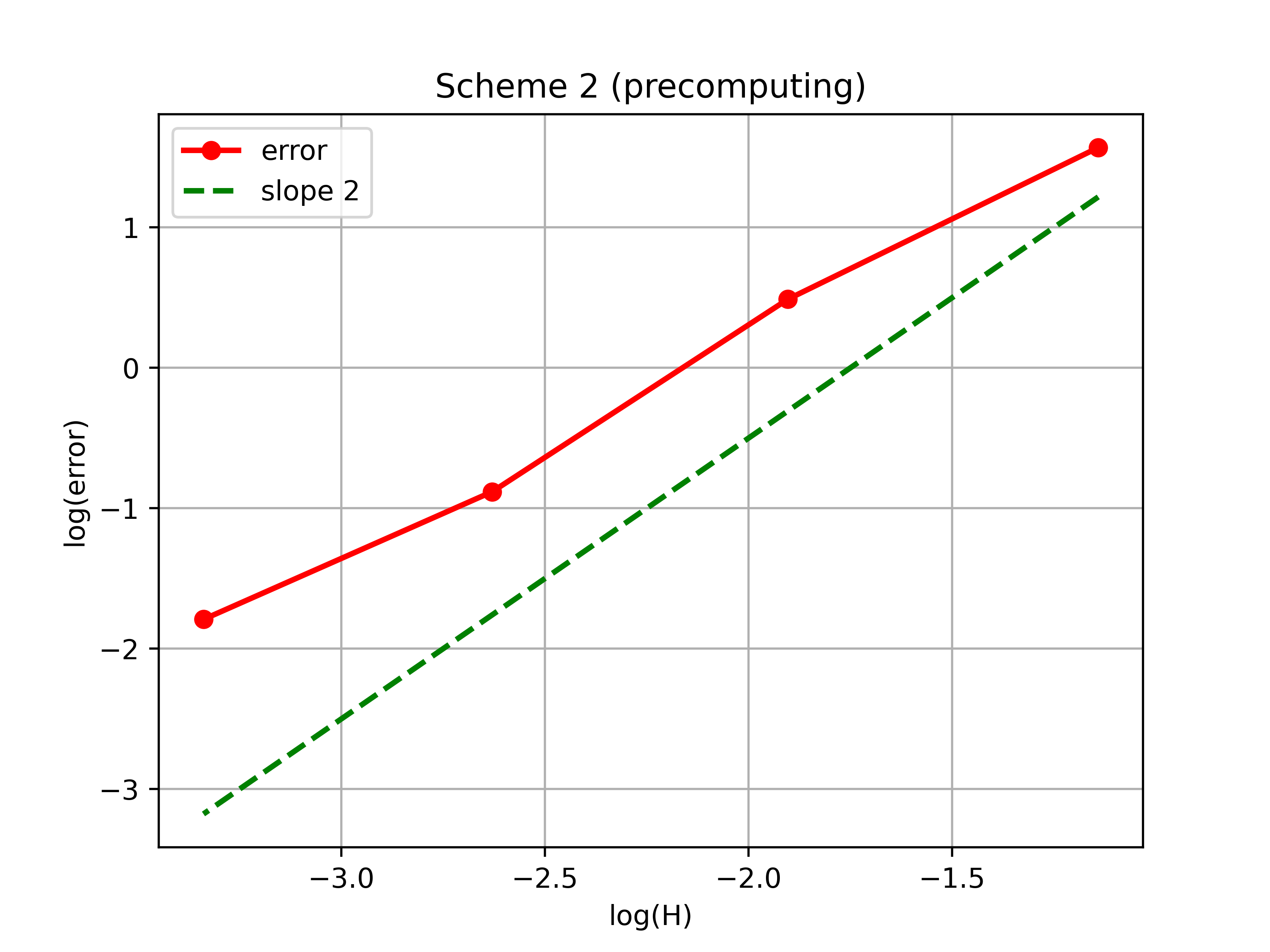}
    \hspace{0.001cm}
    \includegraphics[width=0.45\textwidth]{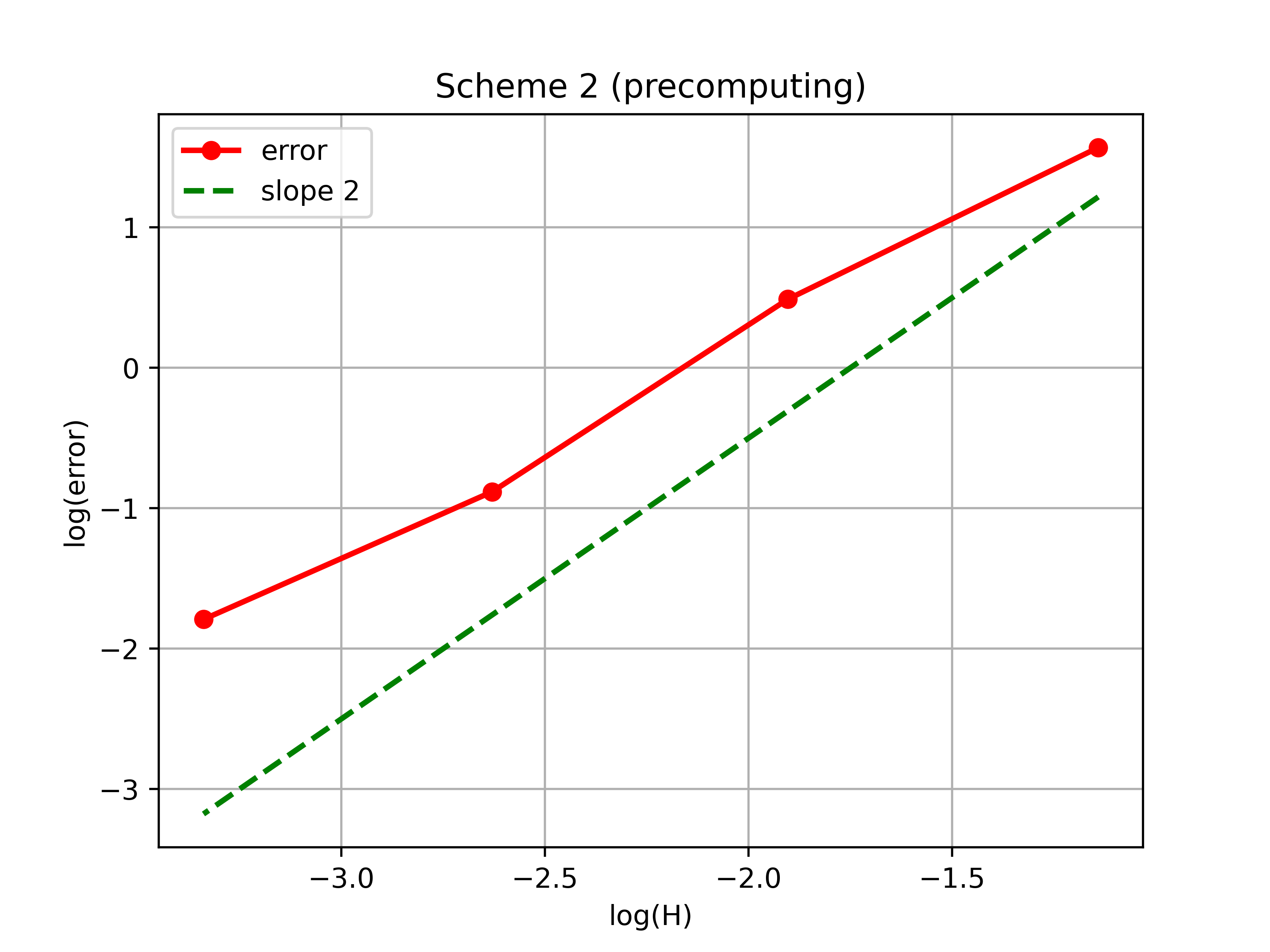}
		\caption{Evolution of errors: log-log plot of $L^2$ error as a function of $H$ with scheme $2$ (left)  and scheme $2$ precomputing (right). The final time is chosen $T = 2$ with $M=20$. The microscopic mesh is fixed with   $813$ DOF  and only the macroscopic mesh is refined by taking the DOF  mentioned in Table \ref{tab:errorandtime}. The dashed line is a reference line with a slope equal to $2$. }
		\label{loglogplot_errors}
\end{figure}
Our study shows a similar error behavior for both schemes with and without precomputing. However, as expected, scheme $2$ with precomputing is the computationally cheapest one. 
We now present our numerical exploration of the temporal order of convergence for scheme $2$ with precomputing.  
To estimate the \SN{OC} in time, we compute the error behavior for different time steps with a fixed macroscopic and microscopic discretization. 
We keep a microscopic mesh fixed with $813$ DOF to solve the Stokes problem and the auxiliary problem as mentioned earlier.  We then fix the macroscopic mesh with $2500$ DOF and compute the macroscopic solution on different time meshes with $M= 320, 640, 1280, 2560, 5120$. 
We then compare the two consecutive macroscopic solutions on  $L^2(S; L^2(\Omega))$ norm to calculate the resulting error.  The comparison between the log-log plot for the error {\it versus} time mesh size and a reference line with slope $1$ is depicted in Figure \ref{loglogplot_errors_refine_time}. As expected from treating the time discretization of a linear parabolic equation by an implicit  Euler scheme (see for instance \cite{thomee2007galerkin}), the order of convergence in time is $1$.

\begin{figure}[hbt!]
		\centering	
    \includegraphics[width=0.45\textwidth]{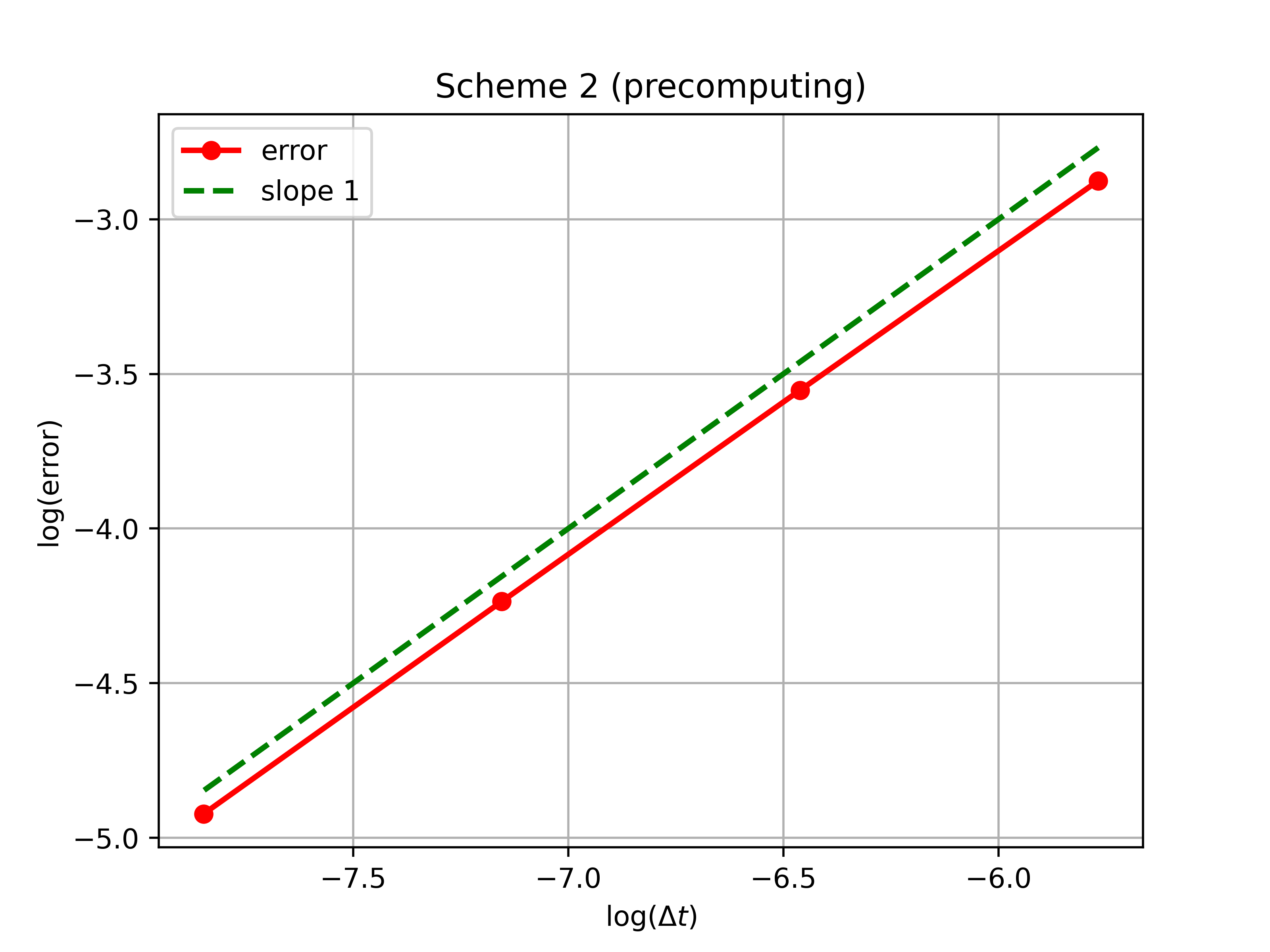}
		\caption{Evolution of errors: log-log plot of $L^2$ error as a function of $\Delta t$ with  scheme $2$ (precomputing).  The final time is chosen as $T = 2$. The mesh refinement is done only in time intervals by choosing $M= 320, 640, 1280, 2560, 5120$.  The dashed line is a reference line with a slope equal to $1$. }
		\label{loglogplot_errors_refine_time}
\end{figure}
We now explore the build-up of errors for scheme $2$ (precomputing) when we refine the microscopic, macroscopic and time mesh at the same time. 
At this time, to compute the error, we first solve the microscopic and macroscopic problem with a fixed number of microscopic, macroscopic DOF as well as with a fixed value of $M$ and then we again solve the same microscopic and macroscopic problem on a refined microscopic, macroscopic and time meshes. The refinement is done by increasing DOFs in the macroscopic and microscopic meshes as well as taking two times larger values for  $M$ in the time domain. The chosen values for the DOFs and $M$ are listed in Table \ref{tab:errorandtime_macro_micro_time_refine}.  We then compute the errors on $L^2(S; L^2(\Omega))$ norm by comparing the obtained two macroscopic solutions. 
The corresponding errors have the
expected decay rate as it can be seen in Table \ref{tab:errorandtime_macro_micro_time_refine}. We point out in Figure \ref{loglogplot_errors_refine_all} the log-log scale plot for the error {\em versus} the macroscopic mesh size $H$. Here we compare the results against a reference line with a slope equal to $2$.

\begin{table}[h]
\centering
\begin{tabular}{lllllll}
\hline
Macro DOFs  & Micro DOFs   &   $H$    & $h$  &   $M$    &  Errors   &Computing time (s)     \\ \hline
64          & 692  & 0.31943    & 0.15646 & 320        &    2.245391 &   17.60         \\
256         &  836  & 0.14907    & 0.07918 &640     &    0.885786 &     71.24            \\
1024        &  1641  & 0.07213   & 0.03977  &1280      &   0.257523&      371.30                \\
4096        & 5288  & 0.03549 & 0.01988 &2560    &   0.072165  &    2789.38                \\
16384       & 20517  & 0.01760   & 0.00994  &5120    &         &                 20622.87   \\
\end{tabular}
\caption{\label{tab:errorandtime_macro_micro_time_refine} Errors and computing time of the schemes 2 (precomputing) for $T=2$. Precomputed $D^{\rm int}$ is obtained by solving the $201$ auxiliary problem.}
\end{table}

\begin{figure}[hbt!]
		\centering	
    \includegraphics[width=0.45\textwidth]{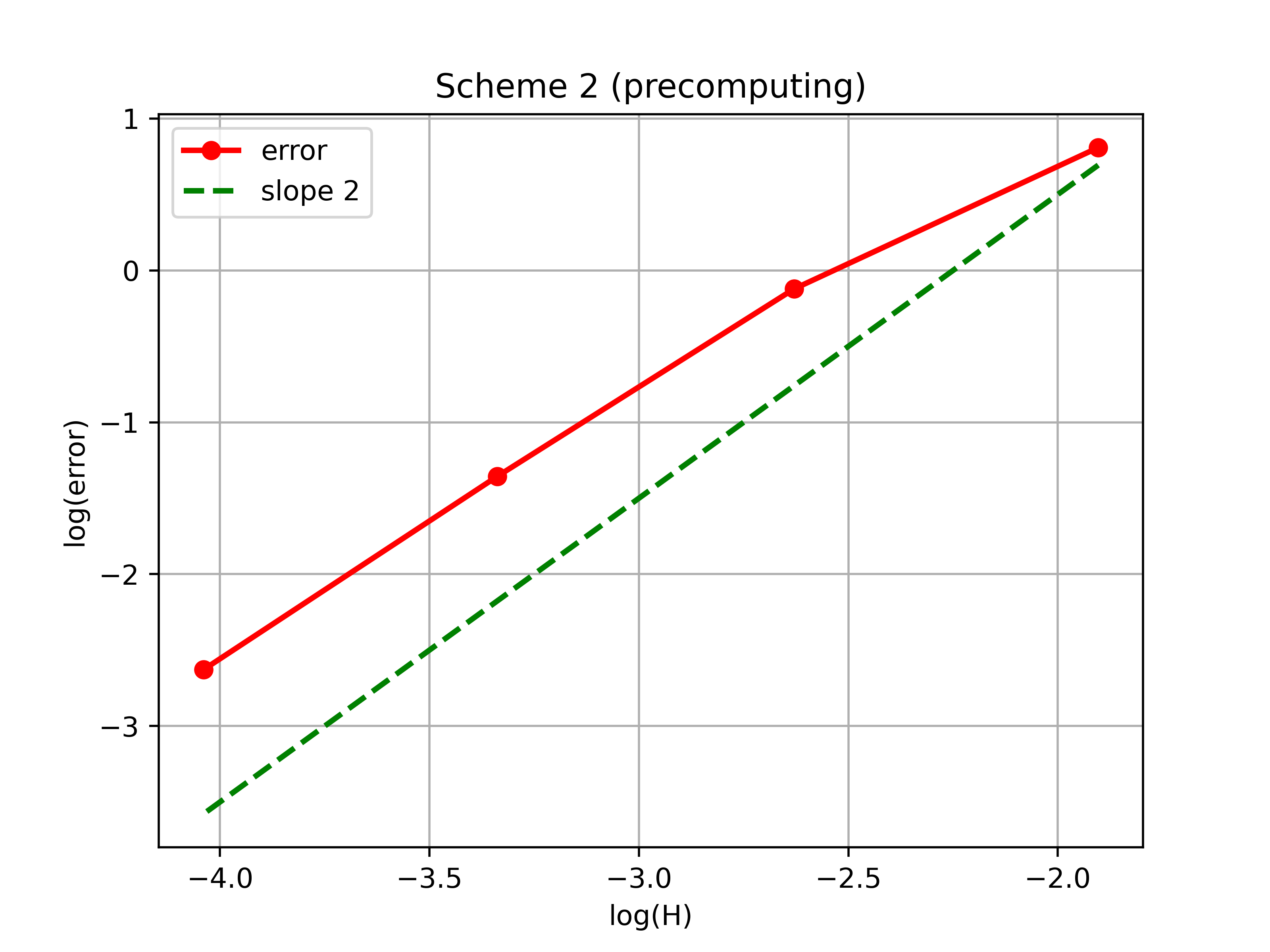}
    \hspace{0.01cm}
    \includegraphics[width=0.45\textwidth]
    {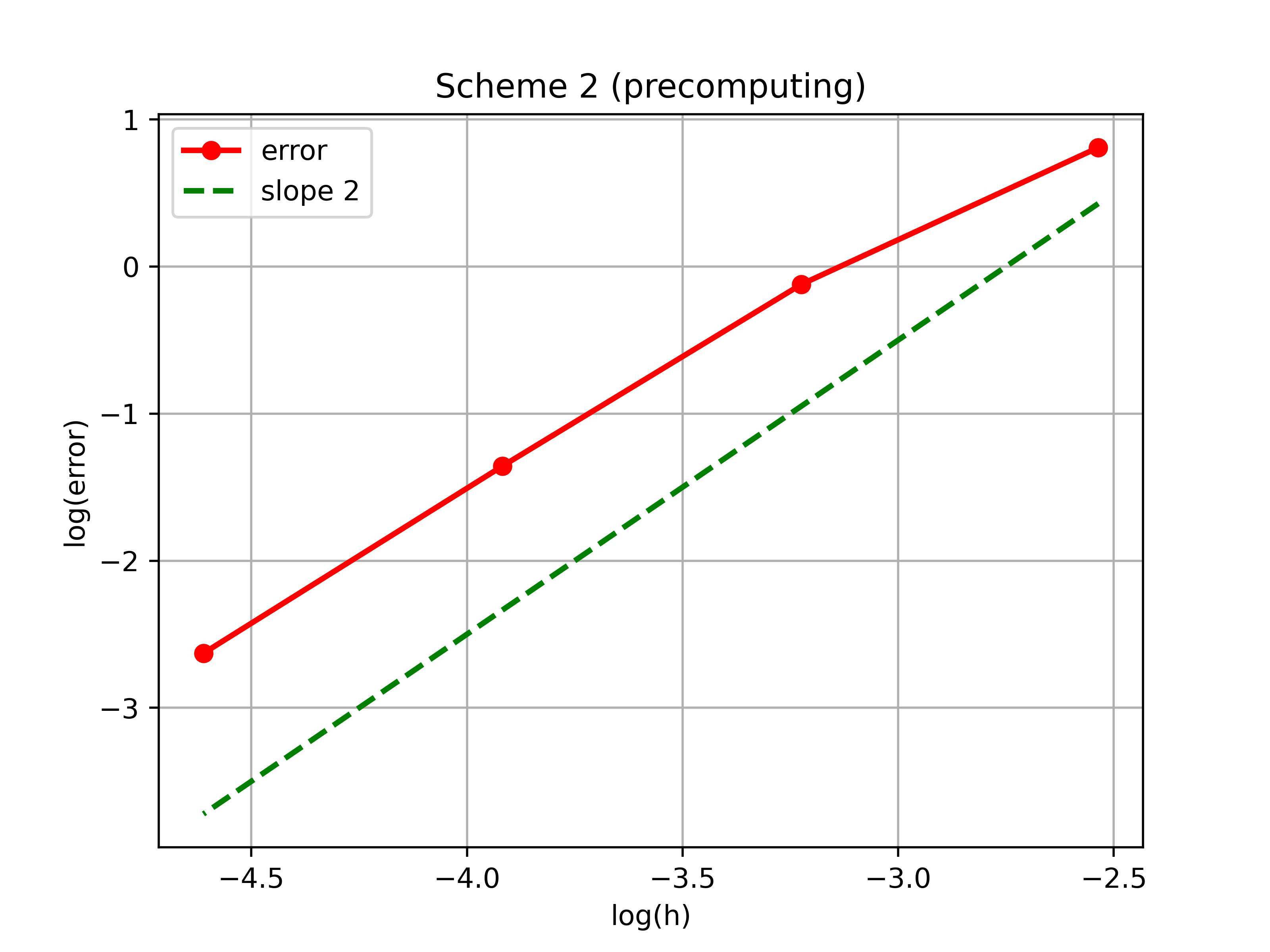}
		\caption{Evolution of errors: log-log plot of $L^2$ error as a function of $H$ with  scheme $2$ precomputing (left). log-log plot of $L^2$ error as a function of $h$ with  scheme $2$ precomputing (right). The final time is chosen as $T = 2$. The mesh refinement is done in time and for both involved space domains as mentioned in Table \ref{tab:errorandtime_macro_micro_time_refine}. The dashed line is a reference line with a slope equal to $2$. }
		\label{loglogplot_errors_refine_all}
\end{figure}

\SN{To assess the influence of the interpolation parameter $\delta$ numerically, we compare the macroscopic solution obtained by scheme 2 with and without precomputing for different choices of $\delta$.
The macroscopic mesh, microscopic mesh, and time step are kept fixed, and as before, the error is measured in the $L^2(S; L^2(\Omega))$ norm. The results are reported in Table~\ref{tab:precompute_delta} which shows the interpolation error decreases monotonically as $\delta$ is refined, with an observed OC close to two for this problem, in agreement with \cref{rem:InterpOC}. 
As expected, the offline precomputing time increases as $\delta$ decreases. Using the precomputed effective diffusion tensor, scheme 2 computes the solution in about $1.5$\,s, whereas scheme 2 without precomputing requires $4045.77$\,s. 
\begin{table}[h]
\centering
\begin{tabular}{ccccc}
\hline
$\delta$ &
Error &
OC &
Offline computing time (s) \\
\hline
$0.1$  & $4.0723\times10^{-6}$ & --    & $40.50$  \\
$0.05$  & $1.0196\times10^{-6}$ & $1.998$ & $79.98$ \\
$0.025$  & $2.4925\times10^{-7}$ & $2.032$ & $159.49$  \\
$0.0125$ & $6.2283\times10^{-8}$ & $2.001$ & $315.93$ \\
\hline
\end{tabular}
\caption{\SN{Errors, order of convergence (OC) and computing time of scheme  2 with precomputing for different values of $\delta$. The simulations are performed with $L= 50$, macro DOFs $=4096$, micro DOFs $=975$, $M= 25$ and $T=1$. The computing time for solving the problem without precomputing is $4045.77$ s.}}
\label{tab:precompute_delta}
\end{table}
}

%\subsection{Simulation results with non-homogeneous Dirichlet boundary condition}
\subsection{Capturing the penetration depth of diffusing particles into structured materials }\label{application}

In this section, we numerically explore a specific application of Problem $(P)$ for a scenario where we replace the homogeneous Dirichlet macroscopic boundary condition \eqref{macro_boundary_con} on one part of the boundary with a non-homogeneous Dirichlet boundary condition.
  %We present the simulation results to a modified Problem $(P)$, where, on one part of the boundary, we exchange the homogeneous Dirichlet macroscopic boundary condition \eqref{macro_boundary_con} with a non-homogeneous Dirichlet boundary condition. 
This modification allows us to test the applicability of our model and simulation techniques  for capturing the dynamics of penetration of populations of particles in structured materials; compare to the scenario discussed in \cite{nepal2021moving} where many small particles want to ingress into a specific type of rubber-based  material. \V{Note that, even though the original problem is formulated with a homogeneous Dirichlet boundary condition, the nonhomogeneous Dirichlet data presented here can be treated by a standard lifting argument (see \cite[Section 2]{raveendran2022upscaling}  for more detail).}
 
To begin our investigation, we define two specific microscopic geometries.
We denote the microscopic domain $Y$ defined in \eqref{micro_domain1} as geometry $1$.
 %For geometry $2$, we choose  $Y = (0,1)^2 \setminus  \mathcal{E}_{0.25}((0.5, 0.5))$ where  $\mathcal{E}_{r}((y_1, y_2))$ denotes the closed disk with radius $r$ and center $(y_1, y_2)$. 
 For geometry $2$, which we already looked at in \cite{raveendran2023strongly}, we let $Y = (0,1)^2 \setminus  (\mathcal{R}_{1} \cup \mathcal{R}_{2})$ with rectangles $\mathcal{R}_{1}:= [0.1, 0.9] \times [0.1, 0.2]$ and $\mathcal{R}_{2}:= [0.1, 0.9] \times [0.8, 0.9]$. 
To visualize geometries, we refer the reader to Figure \ref{microsolution_scheme2_geometry123}.
We solve the Stokes problem in the respective geometries with the same choice of viscosity and force as defined in \cref{microsolution}. The simulation results for $w_1$ after solving the auxiliary problem \eqref{auxiliary1}--\eqref{auxiliary3} with $p=1$ are presented in Figure \ref{microsolution_scheme2_geometry123}. 

 \begin{figure}[h]
		\centering	
    \includegraphics[width=0.35\textwidth]{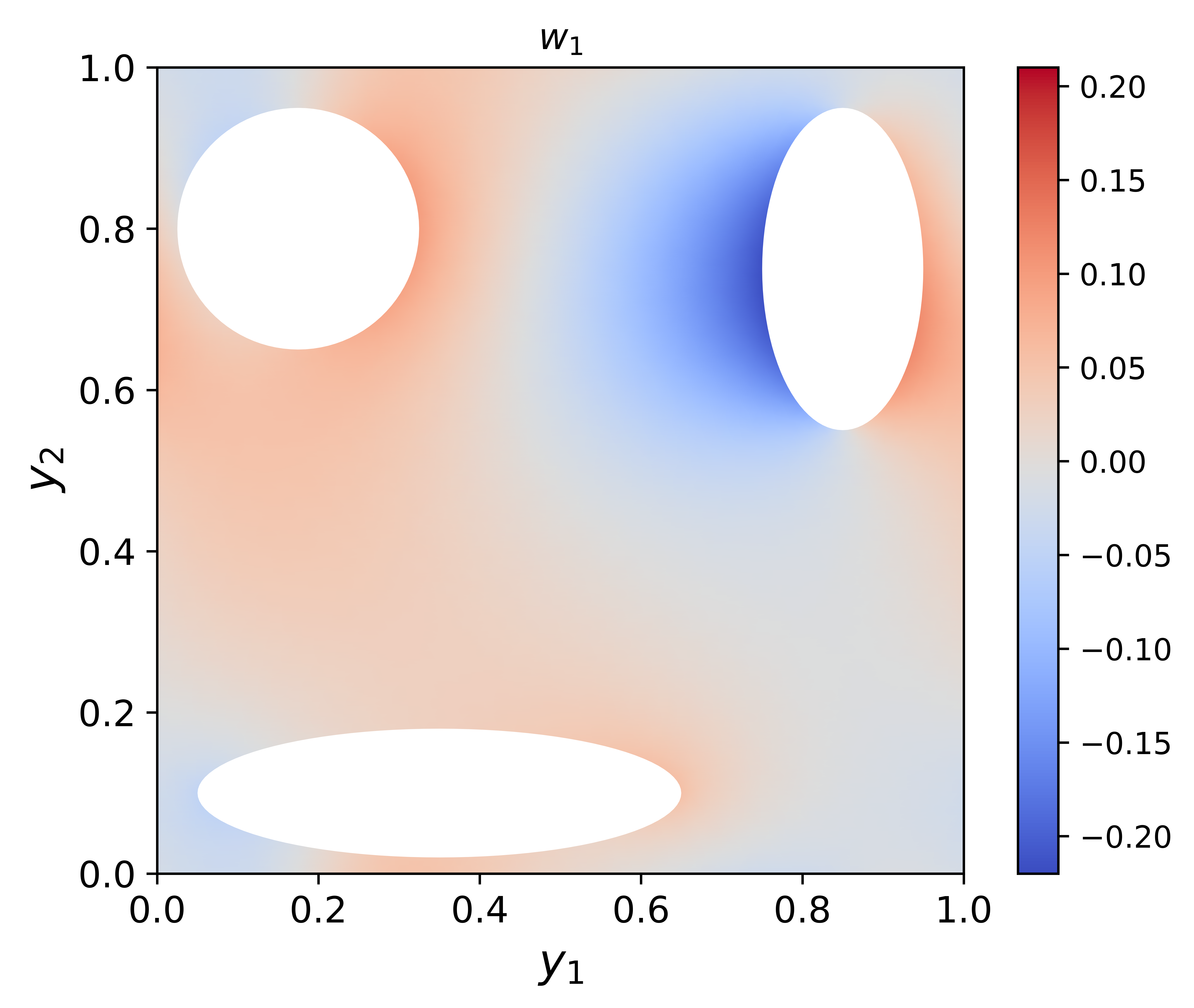}
    \hspace{0.2cm}
    %\includegraphics[width=0.30\textwidth]{Pictures/NonHomoGeometry2.png}
    %\hspace{0.1cm}
    \includegraphics[width=0.35\textwidth]{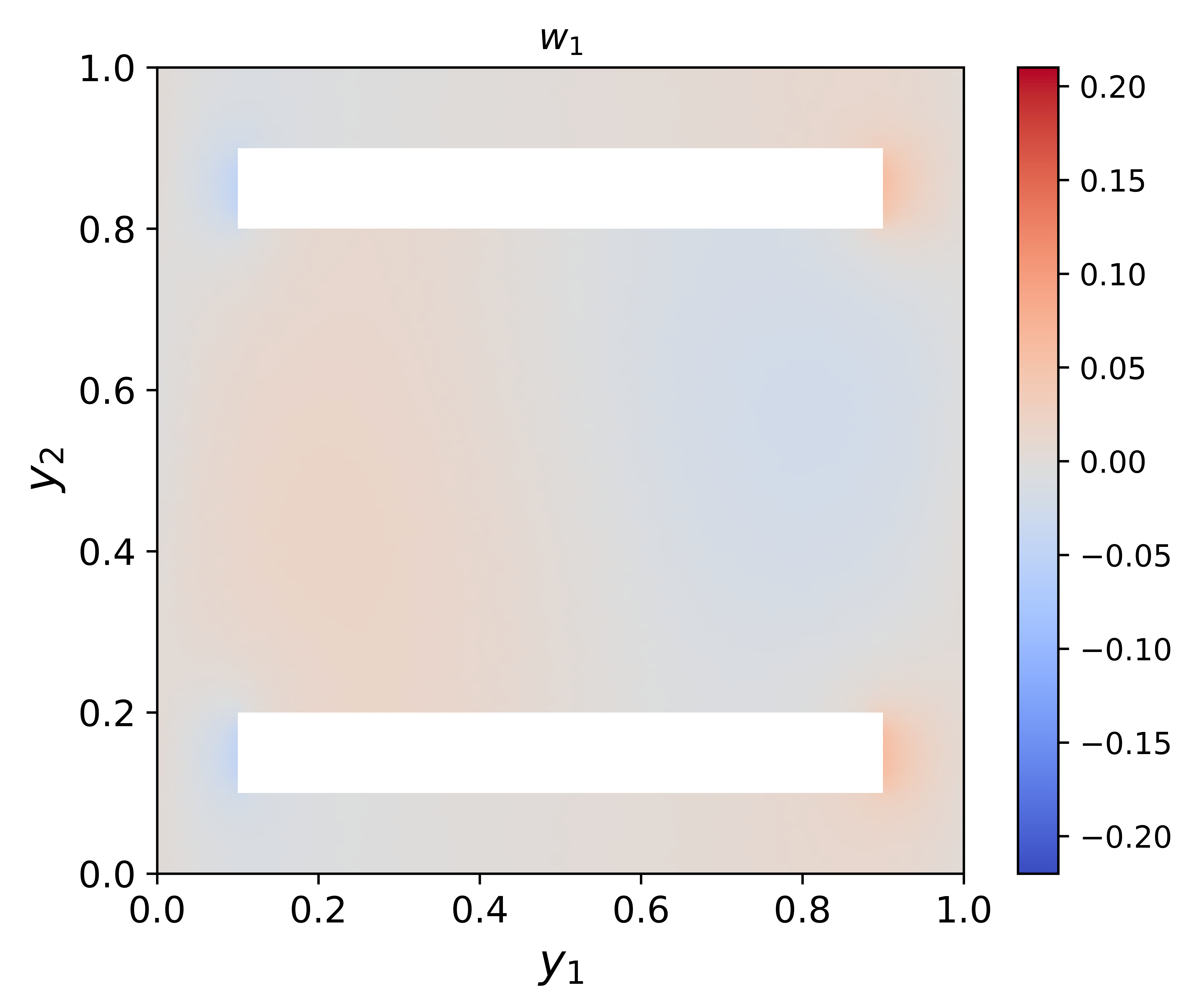}
		\caption{Microscopic solution $w_1$ with geometry $1$ (left) and geometry $2$ (right).}
		\label{microsolution_scheme2_geometry123}
\end{figure}

We choose the unit square domain as a macroscopic domain. Initially, there is no mass of concentration inside the domain, i.e. we set $u_0 = 0$.  We allow some amount of concentration to be present at the bottom part of the boundary, by prescribing the time-dependent non-homogeneous Dirichlet boundary condition $10t/(1+t)$, while the other three parts of the Dirichlet boundary are set to zero. 
 Additionally, the source term $f$  is taken to be zero. Since (for our setting)  
 scheme $2$ with precomputing is computationally the most efficient one, we use it to compute the macroscopic solution; we show the result in Figure \ref{macrosolution_scheme2_non_hom_diri}.
 
 %\begin{figure}[ht]
%		\centering	
%    \includegraphics[width=0.30\textwidth]{Pictures/u_scheme2_pre_nx100_T1_M20_Non_homo_diri_G1.png}
 %   \hspace{0.1cm}
%    \includegraphics[width=0.30\textwidth]{Pictures/u_scheme2_pre_nx100_T1_M20_Non_homo_diri_G2.png}
%    \hspace{0.1cm}
%    \includegraphics[width=0.30\textwidth]{Pictures/u_scheme2_pre_nx100_T1_M20_Non_homo_diri_G3.png}
%		\caption{Macroscopic approximation of the solution based on scheme $2$ (precomputing)  at $T = 2$ and $M = 50$ with microscopic geometries $1-3$, from left to right.}
%		\label{macrosolution_scheme2_non_hom_diri}
%\end{figure}

\begin{figure}[ht]
		\centering	
    \includegraphics[width=0.32\textwidth]{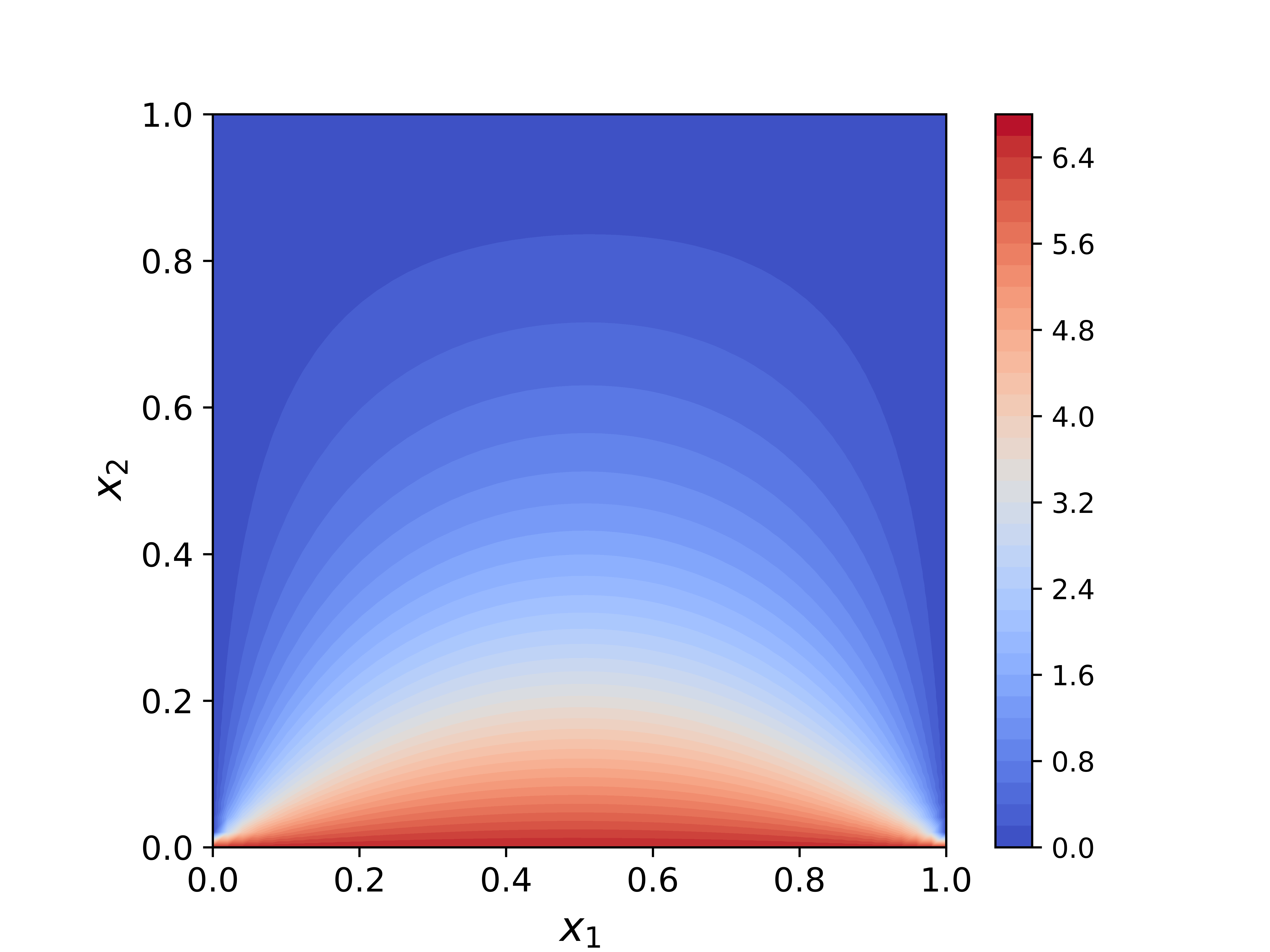}
    \hspace{0.05cm}
    \includegraphics[width=0.32\textwidth]{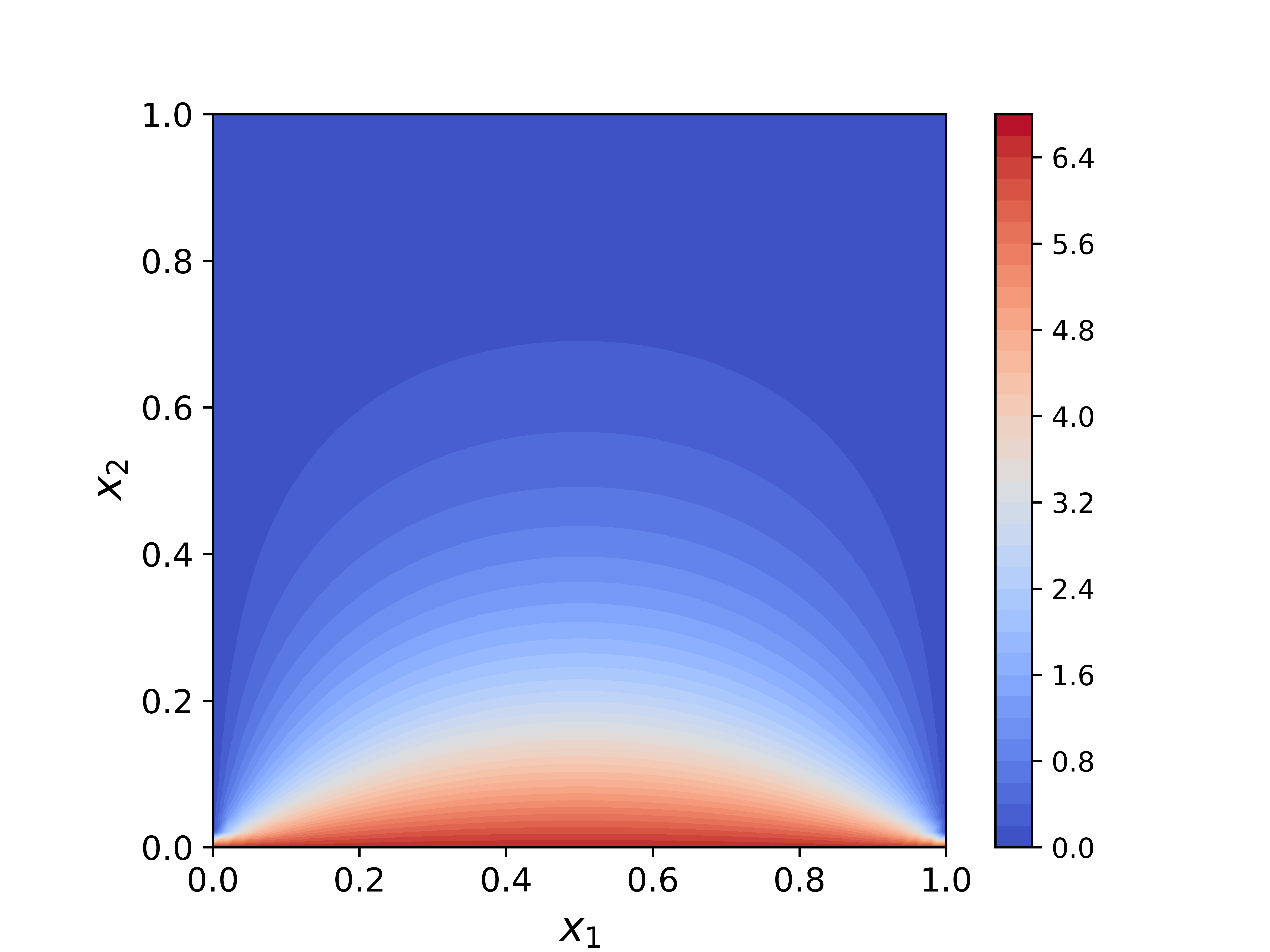}
    \hspace{0.05cm}
    \includegraphics[width=0.32\textwidth]{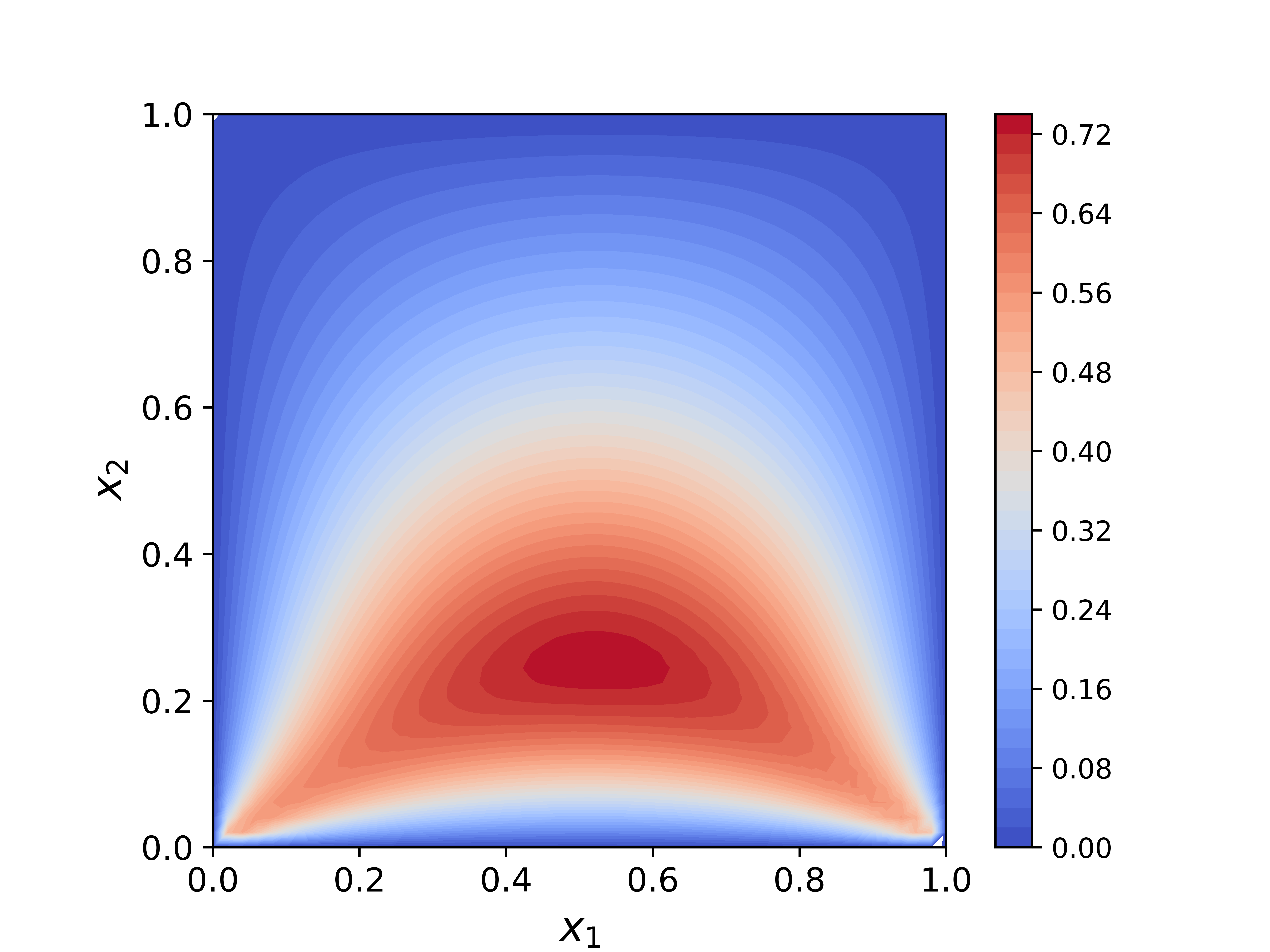}
		\caption{Macroscopic approximation of the solution based on scheme $2$ (precomputing)  at $T = 2$ and $M = 50$ with microscopic geometry $1$ (left), geometry $2$ (middle) and the difference between two solutions (right).}
		\label{macrosolution_scheme2_non_hom_diri}
\end{figure}

 Comparing the three plots in Figure \ref{macrosolution_scheme2_non_hom_diri}, we see a similar dispersive behavior of the concentration corresponding to both microscopic geometries discussed here.
However, the concentration profile corresponding to geometry $2$ (see the middle plot in Figure \ref{macrosolution_scheme2_non_hom_diri}) indicates a slower dispersion than that of geometry $1$.
This is an expected outcome due to the presence of two long rectangular horizontal obstacles in geometry $2$ that hinder the flow in the vertical direction.
To further investigate the influence of the microscopic geometry on macroscopic flow, we present the difference between microscopic solutions corresponding to two geometries in the right graph of Figure \ref{macrosolution_scheme2_non_hom_diri}.   

\section{Conclusion and outlook} \label{conclusion}
In this paper, we numerically study a two-scale system with nonlinear dispersion.  
We employ two decoupling strategies that make use of finite element methods to approximate weak solutions to the proposed two-scale system.
 Inspired by our earlier work \cite{raveendran2023strongly}, we first implement a Picard-type iterative scheme (referred to as scheme $1$), which decouples the system using an iteration. 
 Since the convergence of this scheme is known, we take this scheme as a baseline for comparison.
 We then constructed and implemented another scheme (named scheme $2$) which decouples the problem through time-stepping. 
Comparing the two schemes, scheme 2 is computationally more efficient and perhaps a more natural discretization of the problem.
However, implementing the precomputing strategy discussed in \cref{precomputing_strategy} makes both schemes viable in practice, saving considerable computing time for both schemes, but introducing interpolation error.  This interpolation error can be controlled by refining the parameter step size. 
We also expect that the numerical approximation strategies presented here can be utilized for other large classes of coupled two-scale systems (like those proposed in  \ME{\cite{olivares2021two}}, e.g.).
The mathematical analysis of Problem $(P)$ can be extended to include additional model components at each of the two spatial scales, provided the data is sufficiently smooth.

From a practical point of view, the simulation results are promising.
They suggest how the choice of the microscopic geometry can affect the macroscopic dispersion. 
Interestingly, through careful selection of parameters, localized fast and slow dispersion can be tailored to happen. 
What this means for concrete applications (e.g., penetration in rubber, oil extraction, etc.) is yet to be explored.

The well-posedness study of scheme $2$ and the corresponding FEM error analysis for both schemes $1$ and $2$ are worth investigating. 
We plan to do so at a later time, benefiting from preliminary studies done in \ME{\cite{nepal2023analysis}} where related results were proven.
We intend to investigate whether the parallel-in-time iterative scheme proposed in \cite{borregales2019partially} is applicable to our problem.

\appendix
\section{Appendix} \label[appendix]{appendix}
We describe here the necessary details that are needed for the computation of the vector field $B$ that arises explicitly in the formulation of 
Problem $(P)$ as the microscopic drift. Our attention is focused here on the following Stokes problem formulated in terms of $B$, namely:

%\begin{mybox}{Stokes problem for the drift velocity}
 %\vspace{-.4cm}
 \begin{subequations}\label{Eq:Stokes_System}
 \begin{alignat}{2}
 \label{stoke1}- \mu \Delta B + \nabla p &= F(y) \quad & &\text{in}\;\; Y,\\
 \label{stoke2}\text{div}\, B &= 0 \;\; & &\text{in}\;\; Y,\\
\label{stoke3}  B &= 0 \;\; & &\text{on} \;\;\Gamma_N,\\
 \label{stoke4} y&\mapsto B(y) &\;&\text{is} \; Y\text{-periodic}, 
 \end{alignat}
  \end{subequations}
%\end{mybox}
\noindent
To solve this Stokes problem, we pose \eqref{stoke1}-\eqref{stoke4} into a mixed variational form. 
 To do so,  we introduce the function space
 \begin{equation}
   {H}^{1, \Gamma_N}_{\#}(Y)  = \{ v \in H^1(Y)\;:\; v = 0 \;\; \text{on }\;\; \Gamma_N \;\;\text{and}\;\; v \;\text{is}\; Y \;\text{periodic}  \}.
\end{equation}
 We construct  the weak form of \eqref{stoke1}-\eqref{stoke4} by multiplying \eqref{stoke1}   with the test function  $v_1\in (H^{1, \Gamma_N}_{\#}(Y))^2$ and  \eqref{stoke2} with the test function $ v_2 \in H^1(Y)$ and  integrate the corresponding results over the domain $Y$.  We then add them up to get  the following weak formulation: Find the couple 
$$(B, p) \in (H^{1, \Gamma_N}_{\#}(Y))^2 \times H^1(Y)$$ 
such that the following identity holds:
\begin{align}
\label{stokeweak}\int_Y \nabla_y B(y)\cdot \nabla_y v_1(y) \di{y} & +\int_Y   p(y)  \nabla \cdot v_1(y) \di{y} +  \int_Y  \nabla \cdot  B(y) v_2 \di{y} = \int_Y F(y) v_1 \di{y} 
\end{align}
for all $(v_1, v_2)\in (H^{1, \Gamma_N}_{\#}(Y))^2 \times H^1(Y)$. 
\noindent
The computation of the drift $B$ is dealt with in FEniCS, where we implement \eqref{stokeweak}.  To ensure the stability of the resulting finite element approximations, we employ the Taylor-Hood elements \cite{taylor1973numerical} as they are provided in FEniCS. To this end,  second-order polynomials are used as the basis function for the velocity field and first-order polynomials are used for the pressure. 
 We refer the reader, for instance, to \cite{arnold1984stable} (and references citing this material) for more information on the use of stable finite elements for the computation of the Stokes flow.
\SN{\section{Appendix}\label{appendix2}
We now illustrate by means of an example how the nonlinear drift interaction affects the macroscopic dispersion. The parameters are the same as in Figure \ref{macrosolution_scheme2}, except that we now consider the nonlinear drift structure
$
G(u)=1 / (0.0001+\lvert 1-2u\rvert).
$
The numerical results at $T=2$, obtained using scheme 2, with and without precomputing, are presented in Figure \ref{nonlinear_G}. 
\begin{figure}[hbt!]
		\centering	
    \includegraphics[width=0.45\textwidth]{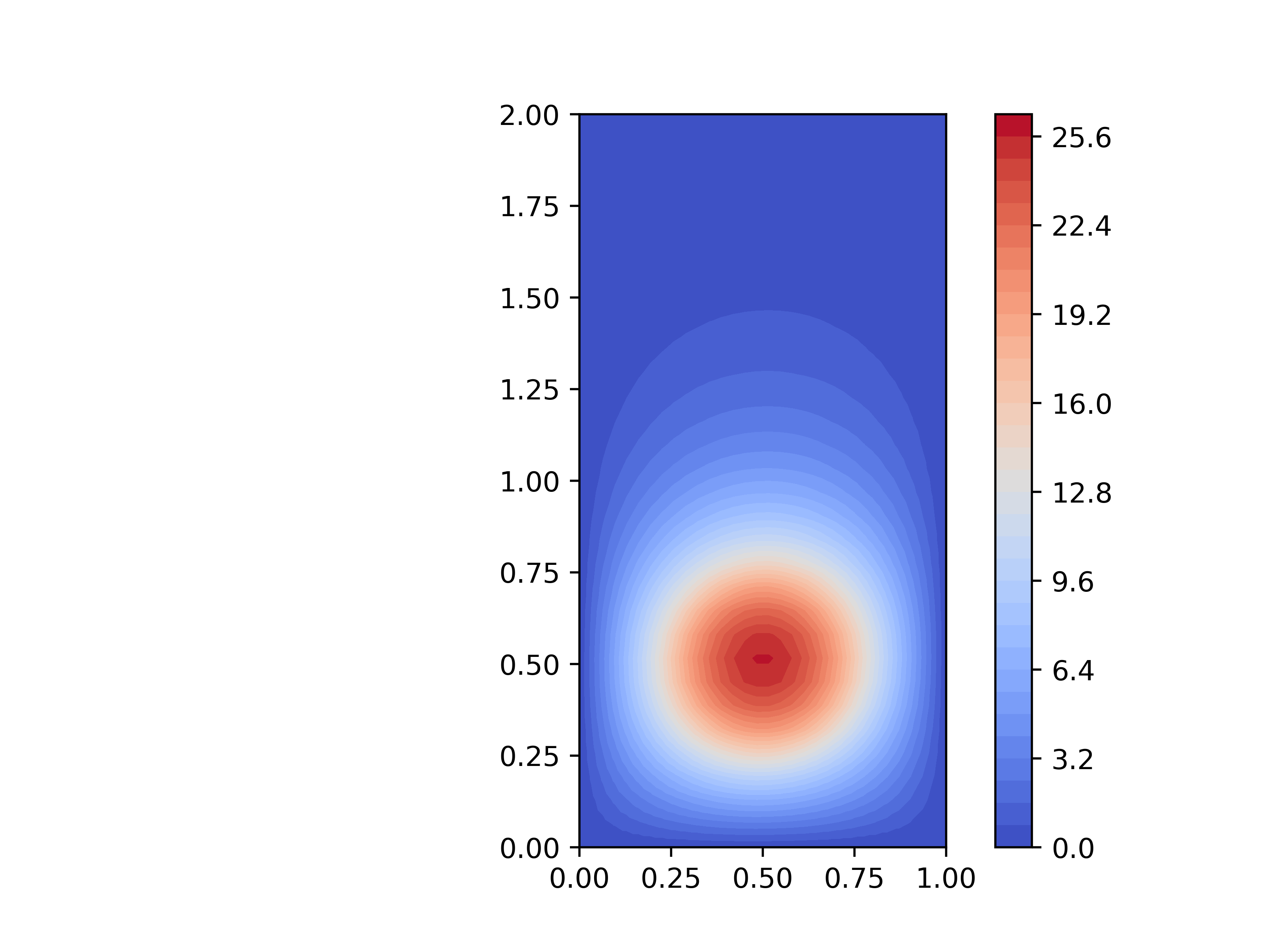}
    \hspace{-0.1cm}
\includegraphics[width=0.450\textwidth]{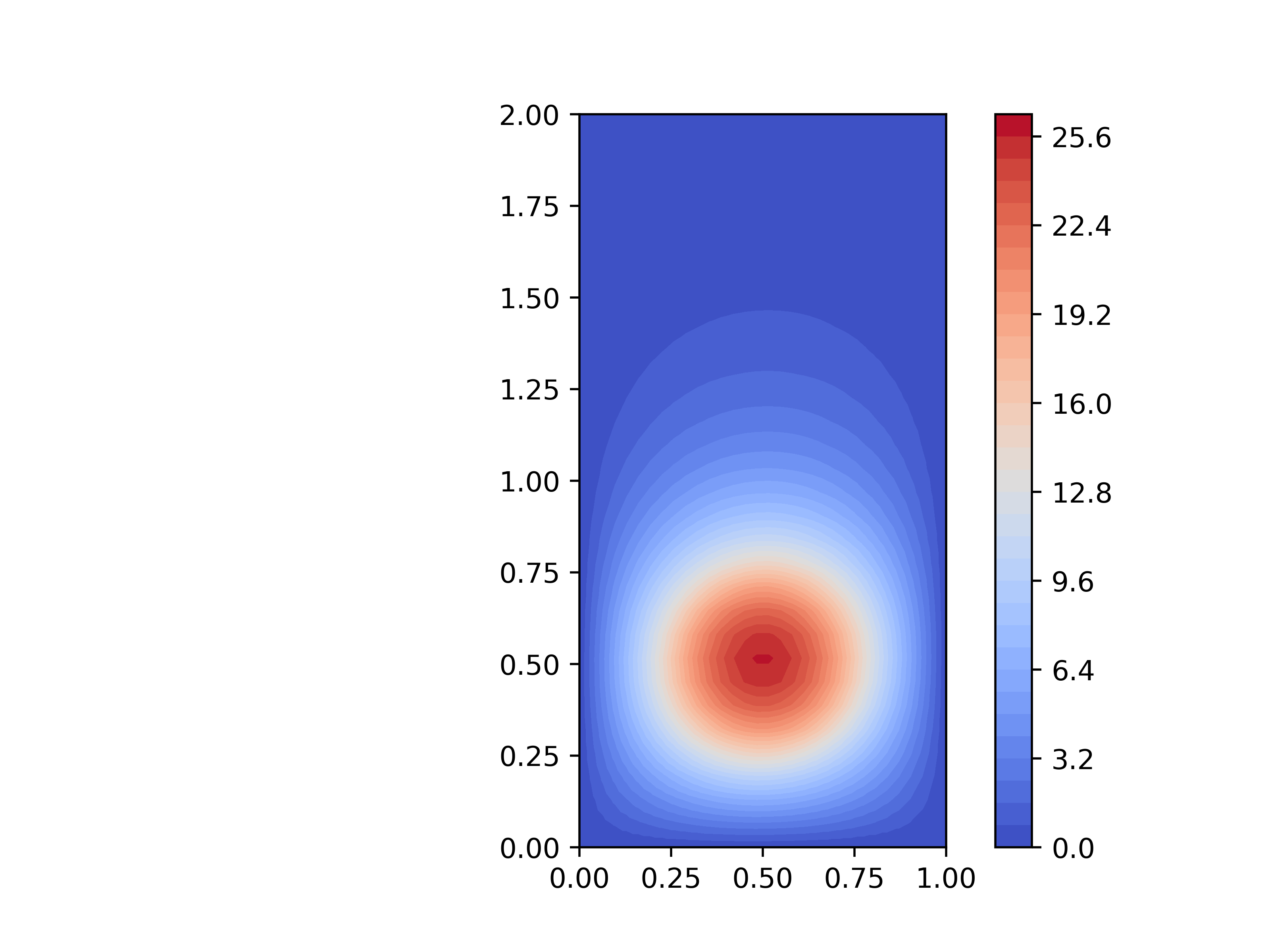}
		\caption{\SN{Concentration profile approximated via scheme 2 (left) and via scheme 2 with precomputing (right) with the nonlinear function $G=1/ (0.0001+\lvert 1-2u\rvert)$}.}
		\label{nonlinear_G}
\end{figure} 
Compared with the linear choice presented in Figure \ref{macrosolution_scheme2}, the concentration diffuses more slowly for this nonlinear interaction function. In particular, the concentration profile remains more localized around the source in the nonlinear case. Since the error behaviour and the computation time comparison are similar to those reported in Section \ref{erroranalysis}, we do not repeat the corresponding tables for this case.
For the corresponding results obtained using scheme 1 with the same nonlinear choice of $G(u)$, we refer the reader to \cite{raveendran2023strongly}. }
\section*{Acknowledgements} We thank T. Freudenberg (Bremen, Germany) for fruitful discussions during his visit to Karlstad.  
The work of V.R., S.N., and A.M. is partially supported by the Swedish Research Council's project ``{\em  Homogenization and dimension reduction of thin heterogeneous layers}" (grant nr. VR 2018-03648).
The research activity of M.E. is funded by the European Union’s Horizon 2022 research and innovation program under the Marie Skłodowska-Curie fellowship project {\em{MATT}} (project nr.~101061956).
R.L. and A.M. are grateful to Carl Tryggers Stiftelse for their financial support through the grant CTS 21:1656. 

\section*{Conflict of Interest }
The authors declare no potential conflict of interest.

 \bibliographystyle{abbrv}
	\bibliography{mybib}

@article{Orlik,
author = {Orlik, J. and Panasenko, G. and Shiryaev, V.},
title = {Optimization of textile-like materials via homogenization and beam approximations},
journal = {Multiscale Modeling \& Simulation},
volume = {14},
number = {2},
pages = {637-667},
year = {2016},
    abstract = { This paper deals with optimization of large structures of linear elastic material with contact modeled by Robin-type boundary conditions. The structures represent textile-like materials and possess certain periodicity or quasiperiodicity properties. Several research topics are combined in this paper: the homogenization approach, together with beam models, is applied to the problem of optimization of effective properties. The homogenization method is used to represent the structures as homogeneous elastic bodies and is essential for formulation of the effective properties' optimization problems. Existing results for problems with Robin-type boundary conditions for thermal conductivity are extended to the case of elasticity. Some additional results, e.g., Korn's inequality for such structures, and continuity of the effective coefficients w.r.t. the geometry design parameters are obtained. Furthermore, optimization of the effective stress profile is considered. A beam approximation is used to reduce the cell problems to algebraic equations and obtain the derivatives of the effective properties symbolically. The adjoint approach is exploited for the PDE-constrained optimization problem resulting from homogenization. }
}

@book{brezis2011functional,
  title={Functional {A}nalysis, {S}obolev {S}paces and {P}artial {D}ifferential {E}quations},
  author={Brezis, H.},
  year={2011},
  publisher={Springer New York, NY}
}

@article{Alamin,
  title={ A review on predictive tortuosity models for composite films in gas barrier applications},
  author={Idris, A. and Muntean, A. and Mesic, B.},
  journal={J. Coat. Technol. Res.},
  volume={19},
  pages={699--716},
  year={2022}
}

@book{BLP,
	Author = {A. Bensoussan and J.-L. Lions and G. Papanicolaou},
	Publisher = {American Mathematical Society},
	Title = {Asymptotic {A}nalysis for {P}eriodic {S}tructures},
	Year = {1978}}

@article{raveendran21,
	title={Scaling effects on the periodic homogenization of a reaction-diffusion-convection problem posed in homogeneous domains connected by a thin composite layer},
  author={Raveendran, V. and Cirillo, E.N.M. and de Bonis, I. and Muntean, A.},
	journal={Quarterly of Applied Mathematics},
	pages={157--200},
	volume = {80},
	year={2022},
}

@article{bringedal2020phase,
  title={Phase field modeling of precipitation and dissolution processes in porous media: Upscaling and numerical experiments},
  author={Bringedal, C. and von Wolff, L. and Pop, I. S.},
  journal={Multiscale Modeling \& Simulation},
  volume={18},
  number={2},
  pages={1076--1112},
  year={2020},
  publisher={SIAM}
}

@article{olivares2021two,
  title={A two-scale iterative scheme for a phase-field model for precipitation and dissolution in porous media},
  author={Olivares, M. B. and Bringedal, C. and Pop, I. S.},
  journal={Applied Mathematics and Computation},
  volume={396},
  pages={125933},
  year={2021},
  publisher={Elsevier}
}

@article{CIRILLO2016436,
title = {Residence time estimates for asymmetric simple exclusion dynamics on strips},
journal = {Physica A: Statistical Mechanics and its Applications},
volume = {442},
pages = {436-457},
year = {2016},
issn = {0378-4371},
author = {Cirillo, E. N. M. and Krehel, O. and Muntean, A. and  van Santen, R. and Sengar, A.},
}

@article{raveendran2023homogenization,
  title={Homogenization of a reaction-diffusion problem with large nonlinear drift and {R}obin boundary data},
  author={Raveendran, V. and {de Bonis}, I. and Cirillo, E. N.M. and Muntean, A.},
  journal={Quarterly of Applied Mathematics},
 pages={19-57},
volume = {83},
  year={2025}
}

@article{raveendran2022upscaling,
  title={Upscaling of a reaction-diffusion-convection problem with exploding non-linear drift},
  author={Raveendran, V. and Cirillo, E.N.M. and Muntean, A.},
  journal={Quarterly of Applied Mathematics},
  pages={641-667},
  volume = {80},
  year={2022}
}

@article{bear2012phenomenological,
  title={A phenomenological approach to modeling transport in porous media},
  author={Bear, J. and Fel, L. G.},
  journal={Transport in {P}orous {M}edia},
  volume={92},
  pages={649--665},
  year={2012},
  publisher={Springer}
}

@book{logg2012automated,
  title={Automated {S}olution of {D}ifferential {E}quations by the {F}inite {E}lement {M}ethod: The {FEniCS} {B}ook},
  author={Logg, A. and Mardal, K.-A. and Wells, G.},
  volume={84},
  year={2012},
  publisher={Springer Science \& Business Media}
}

@article{arnold1984stable,
  title={A stable finite element for the {S}tokes equations},
  author={Arnold, D. N. and Brezzi, F. and Fortin, M.},
  journal={Calcolo},
  volume={21},
  number={4},
  pages={337--344},
  year={1984},
  publisher={Springer}
}

@article{Wood2003RVE,
  doi = {10.1029/2002wr001723},
  url = {https://doi.org/10.1029/2002wr001723},
  year = {2003},
  month = aug,
  publisher = {American Geophysical Union ({AGU})},
  volume = {39},
  number = {8},
  author = {B. D. Wood and F. Cherblanc and M. Quintard and S. Whitaker},
  title = {Volume averaging for determining the effective dispersion tensor: Closure using periodic unit cells and comparison with ensemble averaging},
  journal = {Water Resources Research}
}

@article{formaggia2002numerical,
  title={Numerical treatment of defective boundary conditions for the {N}avier--{S}tokes equations},
  author={Formaggia, L. and Gerbeau, J.-F. and Nobile, F. and Quarteroni, A.},
  journal={SIAM Journal on Numerical Analysis},
  volume={40},
  number={1},
  pages={376--401},
  year={2002},
  publisher={SIAM}
}

@article{nepal2023analysis,
  title={Analysis of a fully discrete approximation to a moving-boundary problem describing rubber exposed to diffusants},
  author={Nepal, S. and Wondmagegne, Y. and Muntean, A.},
  journal={Applied Mathematics and Computation},
  volume={442},
  pages={127733},
  year={2023},
  publisher={Elsevier}
}

@book{adams2003,
	title = {Sobolev {S}paces},
	volume = {140},
	publisher = {Academic Press},
	author = {Adams, R. A. and Fournier, J.J.F.},
	year = {2003},
}

@article{raveendran2023strongly,
  title={Strongly Coupled Two-scale System with Nonlinear Dispersion: {W}eak Solvability and Numerical Simulation},
  author={Raveendran, V. and Nepal, S. and Lyons, R. and Eden, M. and Muntean, A.},
  journal={Zeitschrift fur Angewandte Mathematik und Physik},
  volume = {76},
  pages={108},
  year={2025}
}

@article{hou1997multiscale,
  title={A multiscale finite element method for elliptic problems in composite materials and porous media},
  author={Hou, T. Y. and Wu, X.-H.},
  journal={Journal of Computational Physics},
  volume={134},
  number={1},
  pages={169--189},
  year={1997},
  publisher={Elsevier}
}

@article{abdulle2012reduced,
  title={Reduced basis finite element heterogeneous multiscale method for high-order discretizations of elliptic homogenization problems},
  author={Abdulle, A. and Bai, Y.},
  journal={Journal of Computational Physics},
  volume={231},
  number={21},
  pages={7014--7036},
  year={2012},
  publisher={Elsevier}
}

@article{ray2019numerical,
  title={Numerical investigation of a fully coupled micro-macro model for mineral dissolution and precipitation},
  author={Ray, N. and Oberlander, J. and Frolkovic, P.},
  journal={Computational Geosciences},
  volume={23},
  pages={1173--1192},
  year={2019},
  publisher={Springer}
}

@article{taylor1973numerical,
  title={A numerical solution of the {N}avier-{S}tokes equations using the finite element technique},
  author={Taylor, C. and Hood, P.},
  journal={Computers \& Fluids},
  volume={1},
  number={1},
  pages={73--100},
  year={1973}
}

@book{ciarlet2002finite,
  title={The {F}inite {E}lement {M}ethod for {E}lliptic {P}roblems},
  author={Ciarlet, P. G.},
  year={2002},
  publisher={SIAM}
}

@article{rethore2007two,
  title={A two-scale approach for fluid flow in fractured porous media},
  author={R{\'e}thor{\'e}, J. and Borst, R. de and Abellan, M.-A.},
  journal={International Journal for Numerical Methods in Engineering},
  volume={71},
  number={7},
  pages={780--800},
  year={2007},
  publisher={Wiley Online Library}
}

@article{eck2002two,
  title={A two-scale method for the computation of solid--liquid phase transitions with dendritic microstructure},
  author={Eck, C. and Knabner, P. and Korotov, S.},
  journal={Journal of Computational Physics},
  volume={178},
  number={1},
  pages={58--80},
  year={2002},
  publisher={Elsevier}
}

@article{musuuza2009extended,
  title={An extended stability criterion for density-driven flows in homogeneous porous media},
  author={Musuuza, J.L. and Attinger, S. and Radu, F.A.},
  journal={Advances in Water Resources},
  volume={32},
  number={6},
  pages={796--808},
  year={2009},
  publisher={Elsevier}
}

@book{thomee2007galerkin,
  title={Galerkin {F}inite {E}lement {M}ethods for {P}arabolic {P}roblems},
  author={Thom{\'e}e, V.},
  volume={25},
  year={2007},
  publisher={Springer Science \& Business Media}
}

@article{nepal2021moving,
  title={A moving boundary approach of capturing diffusants penetration into rubber: {FEM} Approximation and comparison with laboratory measurements},
  author={Nepal, S. and Meyer, R. and Kr{\"o}ger, N. H. and Aiki, T. and Muntean, A. and Wondmagegne, Y. and Giese, U.},
  journal = {Kautschuk Gummi Kunststoffe},
volume = {5},
number = {},
pages = {61-69},
year = {2021}
}

@article{guo2015dispersion,
  title={Dispersion in porous media with heterogeneous nonlinear reactions},
  author={Guo, J. and Quintard, M. and Laouafa, F.},
  journal={Transport in Porous Media},
  volume={109},
  pages={541--570},
  year={2015},
  publisher={Springer}
}

@article{neff2019modelling,
  title={Modelling diffusion induced swelling behaviour of natural rubber in an organic liquid},
  author={Neff, F. and Lion, A. and Johlitz, M.},
  journal={Journal of Applied Mathematics and Mechanics},
  volume={99},
  number={3},
  pages={e201700280},
  year={2019},
  publisher={Wiley Online Library}
}

@article{Wilmers,
  title={Simulation of non-classical diffusion in polymers},
  author={Wilmers, J. and Bargmann, S.},
  journal={Heat Mass Transfer},
  volume={50},
  pages={1543--1552},
  year={2014}
}

@article{borregales2019partially,
  title={A partially parallel-in-time fixed-stress splitting method for Biot's consolidation model},
  author={Borregales, M. and Kumar, K. and Radu, F. A. and Rodrigo, C. and Gaspar, F. J.},
  journal={Computers \& Mathematics with Applications},
  volume={77},
  number={6},
  pages={1466--1478},
  year={2019},
  publisher={Elsevier}
}

@article{ALLAIRE20102292,
title = {Two-scale expansion with drift approach to the {T}aylor dispersion for reactive transport through porous media},
journal = {Chemical Engineering Science},
volume = {65},
number = {7},
pages = {2292-2300},
year = {2010},
issn = {0009-2509},
author = {Allaire, G. and Brizzi, R. and  Mikelić, A. and  Piatnitski, A.},
}

@article{le2023numerical,
  title={A numerical two-scale approach for nonlinear hyperelastic beams and beam networks},
  author={Le Cl{\'e}zio, H. and Lestringant, Claire and Kochmann, Dennis M},
  journal={International Journal of Solids and Structures},
  volume={276},
  pages={112307},
  year={2023},
  publisher={Elsevier}
}

@article{ling2018hydrodynamic,
  title={Hydrodynamic dispersion in thin channels with micro-structured porous walls},
  author={Ling, B. and Oostrom, M. and Tartakovsky, A. M. and Battiato, I.},
  journal={Physics of Fluids},
  volume={30},
  number={7},
  year={2018},
  publisher={AIP Publishing}
}

@article{tan2007multiscale,
author  = {Tan, L. and Zabaras, N.},
title   = {Multiscale Modeling of Alloy Solidification Using a Database Approach},
journal = {Journal of Computational Physics},
volume  = {227},
number  = {1},
pages   = {728--754},
year    = {2007},
doi     = {10.1016/j.jcp.2007.08.016}
}

@article{boyaval2008reduced,
author  = {Boyaval, S.},
title   = {Reduced-Basis Approach for Homogenization beyond the Periodic Setting},
journal = {Multiscale Modeling \& Simulation},
volume  = {7},
number  = {1},
pages   = {466--494},
year    = {2008},
doi     = {10.1137/070688791}
}

@article{redeker2013fast,
author  = {Redeker, M. and Eck, C.},
title   = {A Fast and Accurate Adaptive Solution Strategy for Two-Scale Models with Continuous Inter-Scale Dependencies},
journal = {Journal of Computational Physics},
volume  = {240},
pages   = {268--283},
year    = {2013},
doi     = {10.1016/j.jcp.2012.12.025}
}

@ARTICLE{Abdulle2012-nr,
  title     = "The heterogeneous multiscale method",
  author    = "Abdulle, A. and Weinan, E. and Engquist, B. and
               Vanden-Eijnden, E.",
  abstract  = "The heterogeneous multiscale method (HMM), a general framework
               for designing multiscale algorithms, is reviewed. Emphasis is
               given to the error analysis that comes naturally with the
               framework. Examples of finite element and finite difference HMM
               are presented. Applications to dynamical systems and stochastic
               simulation algorithms with multiple time scales, spall fracture
               and heat conduction in microprocessors are discussed.",
  journal   = "Acta Numerica",
  publisher = "Cambridge University Press (CUP)",
  volume    =  21,
  pages     = "1--87",
  year      =  2012,
  language  = "en"
}

@article {Evendiev04,
    AUTHOR = {Efendiev, Y. and Hou, T. and Ginting, V.},
     TITLE = {Multiscale finite element methods for nonlinear problems and
              their applications},
   JOURNAL = {Commun. Math. Sci.},
  FJOURNAL = {Communications in Mathematical Sciences},
    VOLUME = {2},
      YEAR = {2004},
    NUMBER = {4},
     PAGES = {553--589},
      ISSN = {1539-6746,1945-0796},
   MRCLASS = {65N30},
  MRNUMBER = {2119929},
MRREVIEWER = {Karsten\ Urban},
       DOI = {10.4310/cms.2004.v2.n4.a2},
       URL = {https://doi.org/10.4310/cms.2004.v2.n4.a2},
}

@article{pope1997computationally,
  title={Computationally efficient implementation of combustion chemistry using in situ adaptive tabulation},
  author={Pope, S. B.},
  journal={Combustion Theory and Modelling},
  volume={1},
  number={1},
  pages={41--63},
  year={1997},
  publisher={Taylor \& Francis}
}

@ARTICLE{Chen1995-ly,
  title     = "Numerical simulation and scaling of {NOxEmissions} from
               turbulent hydrogen jet flames with various amounts of helium
               dilution",
  author    = "Chen, J-Y and Chang, W-C and Koszykowski, M",
  abstract  = "ABSTRACT Five reduced reaction mechanisms and a detailed
               mechanism have been incorporated into Monte Carlo simulation of
               NO formation in turbulent hydrogen jet flames with various
               amounts of helium dilution. Comparisons of the predicted NO
               emission indexes with recent experimental data show significant
               improvement in the predictions by using better reduced chemistry
               especially for high helium dilution cases. With proper time
               scales for chemistry and fluid dynamics, the experimentally
               observed NOx, emissions are found to scale with the square root
               of the Damkohler number as EINOX = 0.09\{Lv 3/(Ujetdjet *2) /
               $\tau$No\} 1/2 over four decades. This scaling relation is
               reproduced by the numerical model when better reduced chemistry
               is used.",
  journal   = "Combust. Sci. Technol.",
  publisher = "Informa UK Limited",
  volume    = "110-111",
  number    =  1,
  pages     = "505--529",
  month     =  dec,
  year      =  1995
}

@article{rocha2021onthefly,
  author  = {Rocha, I. B. C. M. and Kerfriden, Pierre and van der Meer, Frans P.},
  title   = {On-the-Fly Construction of Surrogate Constitutive Models for Concurrent Multiscale Mechanical Analysis through Probabilistic Machine Learning},
  journal = {Journal of Computational Physics: X},
  volume  = {9},
  pages   = {100083},
  year    = {2021},
  doi     = {10.1016/j.jcpx.2020.100083}
}

@article{eden2026twoscale,
author    = {Eden, M. and Freudenberg, T. and Muntean, A.},
title     = {Two-Scale Phase-Transition Models with Evolving Microstructures: Analysis and Computation},
journal   = {Advances in Mathematical Sciences and Applications},
year      = {2026},
volume    = {35},
number    = {2},
pages     = {579--629},
issn      = {1343-4373},
publisher = {Gakko Tosho Co.},
urn       = {urn:nbn:se:kau:diva-108845}
}

@article{redeker2016upscaling,
author  = {Redeker, M. and Rohde, C. and Pop, I. S.},
title   = {Upscaling of a Tri-Phase Phase-Field Model for Precipitation in Porous Media},
journal = {IMA Journal of Applied Mathematics},
volume  = {81},
number  = {5},
pages   = {898--939},
year    = {2016},
doi     = {10.1093/imamat/hxw023}
}

@article{Fiori2025,
  author  = {Fiori, Matilde and Pramanik, Satyajit and MacMinn, Christopher W.},
  title   = {Solute transport due to periodic loading in a soft porous material},
  journal = {Journal of Fluid Mechanics},
  year    = {2025},
  volume  = {1009},
  pages   = {A15},
  doi     = {10.1017/jfm.2025.189}
}

@article{matache2002two,
  title={Two-scale FEM for homogenization problems},
  author={Matache, A. M. and Schwab, C.},
  journal={ESAIM: Mathematical Modelling and Numerical Analysis},
  volume={36},
  number={4},
  pages={537--572},
  year={2002},
  publisher={EDP Sciences}
}

@article{Feyel2003,
author  = {Feyel, Fr{'e}d{'e}ric},
title   = {A multilevel finite element method ({FE}$^2$) to describe the response of highly non-linear structures using generalized continua},
journal = {Computer Methods in Applied Mechanics and Engineering},
volume  = {192},
number  = {28--30},
pages   = {3233--3244},
year    = {2003},
doi     = {10.1016/S0045-7825(03)00348-7}
}

\end{document}